\title[The wave front set of the Wigner distribution]{The wave front set of the Wigner distribution and instantaneous frequency}
\author[P. Boggiatto]{Paolo Boggiatto}
\address{Dipartimento di Matematica, Universit{\`a} di Torino, Via Carlo Alberto 10, 10123 Torino (TO), Italy.}
\email{paolo.boggiatto@unito.it}
\author[A. Oliaro]{Alessandro Oliaro}
\address{Dipartimento di Matematica, Universit{\`a} di Torino, Via Carlo Alberto 10, 10123 Torino (TO), Italy.}
\email{alessandro.oliaro@unito.it}
\author[P. Wahlberg]{Patrik Wahlberg}
\address{Dipartimento di Matematica, Universit{\`a} di Torino, Via Carlo Alberto 10, 10123 Torino (TO), Italy.}
\email{patrik.wahlberg@unito.it}
\keywords{Wigner distribution, wave front set, instantaneous frequency}
\numberwithin{equation}{section}          
\newtheorem{thm}{Theorem}
\numberwithin{thm}{section}
\newcommand{\rubrik}{}
\newtheorem{prop}[thm]{Proposition}
\newtheorem{cor}[thm]{Corollary}
\newtheorem{lem}[thm]{Lemma}
\theoremstyle{definition}
\newtheorem{defn}[thm]{Definition}
\newtheorem{example}[thm]{Example}
\theoremstyle{remark}
\newtheorem{rem}[thm]{Remark}              
\newcommand{\pd}[1] {\partial ^#1}
\newcommand{\pdd}[2] {\partial_{#1} ^#2}
\newcommand{\ro}{\mathbb R}
\newcommand{\no}{\mathbb N}
\newcommand{\rr}[1]{\mathbb R^{#1}}
\newcommand{\nn}[1]{\mathbb N^{#1}}
\newcommand{\co}{\mathbb C}
\newcommand{\cc}[1]{\mathbb C^{#1}}
\newcommand{\ep}{\varepsilon}
\newcommand{\supp}{\operatorname{supp}}
\newcommand{\sgsupp}{\operatorname{sing\, supp}}
\newcommand{\eabs}[1]{\langle #1\rangle}
\newcommand{\wh}{\widehat}
\newcommand{\re}{\rm Re \ }
\newcommand{\im}{\rm Im \ }
\begin{document}

\begin{abstract}
We prove a formula expressing the gradient of the phase function of a function $f: \rr d \mapsto \co$ as a normalized first frequency moment of the Wigner distribution for fixed time. The formula holds when $f$ is the Fourier transform of a distribution of compact support, or when $f$ belongs to a Sobolev space $H^{d/2+1+\ep}(\rr d)$ where $\ep>0$.
The restriction of the Wigner distribution to fixed time is well defined provided a certain condition on its wave front set is satisfied. Therefore we first study the wave front set of the Wigner distribution of a tempered distribution.
\end{abstract}

\maketitle

\section{Introduction}

This paper treats a time-frequency version of the following trivial observation in Fourier analysis. Let $f(t) = C e^{2 \pi i \xi_0 \cdot t}$, $C \in \co \setminus 0$, $\xi_0 \in \rr d$, be a nonzero complex multiple of a character on $\rr d$. Its Fourier transform is $\wh f = C \delta_{\xi_0}$ so the frequency $\xi_0$ may be expressed using the Fourier transform as the normalized first order moment formula
\begin{equation}\label{instfreqchar0}
\xi_0 = \frac{\langle \wh f, \xi \rangle}{\langle \wh f, 1 \rangle}
\end{equation}
where $\langle \wh f, \xi \rangle$ is the vector $\langle \wh f, \xi \rangle = (\langle \wh f, \xi_j \rangle)_{j=1}^d \in \rr d$ and $\xi_j: \rr d \mapsto \ro$ is coordinate function $j$, $1 \leq j \leq d$.

We will deduce a time-frequency version of this formula for more general functions, which looks like
\begin{equation}\label{instfreqformula1}
\frac1{2 \pi} \nabla \arg f(t) = \frac{\langle W_f(t,\cdot), \xi \rangle}{\langle W_f(t,\cdot), 1 \rangle} \quad \forall t \in \rr d: \ f(t) \neq 0, \quad f \in \mathscr F \mathscr E'(\rr d).
\end{equation}
In the formula \eqref{instfreqformula1} $W_f$ denotes the Wigner distribution, defined by
\begin{equation}\nonumber
W_f (t,\xi) = \int_{\rr d} f (t+\tau/2) \overline{f(t-\tau/2)} e^{-2 \pi i \tau \cdot \xi} d\tau
\end{equation}
for $f \in \mathscr S(\rr d)$.
For functions in $\mathscr F \mathscr E'(\rr d)$ which are not multiples of characters $e^{2 \pi i \xi_0 \cdot t}$, the frequency is not well defined. Therefore it is replaced in \eqref{instfreqformula1} by the natural generalization
\begin{equation}\nonumber
\frac1{2 \pi} \nabla \arg f(t) = \frac1{2 \pi} \left( \partial_j \arg f(t) \right)_{j=1}^d,
\end{equation}
that is, a normalized gradient of the phase function. We use the term instantaneous frequency, taken from the engineering literature \cite{Cohen1,Flandrin1}, for this quantity. Thus \eqref{instfreqformula1} may be seen as a time-frequency version of the observation \eqref{instfreqchar0}. We shall also prove a version of \eqref{instfreqformula1} for functions $f$ that belong to a Sobolev space $H^{d/2+1+\ep}(\rr d)$ where $\ep>0$. Then the distribution actions $\langle W_f(t,\cdot), \xi \rangle$ and $\langle W_f(t,\cdot), 1 \rangle$ may be replaced by Lebesgue integrals.

In order to prove \eqref{instfreqformula1} we need to restrict the Wigner distribution as $W_f \mapsto W_f(t,\cdot)$ to fixed time $t \in \rr d$. For $f \in \mathscr S'(\rr d)$, the restriction is a map $\mathscr S'(\rr {2d}) \mapsto \mathscr D'(\rr d)$. Restriction of a distribution to a submanifold is possible under certain conditions on the wave front set \cite{Hormander1}. More precisely, the restriction defines a well defined distribution provided the normal bundle of the submanifold has empty intersection with the wave front set of the distribution. Thus we are led to study the wave front set of the Wigner distribution first. We pursue this study in somewhat greater generality than actually needed in order to prove formula \eqref{instfreqformula1}.

We define the space $WFW^\perp$ of tempered distributions such that the wave front set of the Wigner distribution is directed purely in the frequency direction, and the space $WFW^{\neq}$ of tempered distributions such that the wave front set of the Wigner distribution is nowhere parallel to the time direction. The latter space admits restriction $W_f \mapsto W_f(t,\cdot)$. We show the inclusions $C_{\rm slow}^\infty \subseteq WFW^\perp$ and $C_{\rm slow}^\infty \subseteq V_{\rm con} \subseteq WFW^{\neq}$. Here $C_{\rm slow}^\infty$ denotes the space of smooth functions on $\rr d$ such that a derivative of any order is bounded by a constant times a fixed polynomial. $C_{\rm slow}^\infty$ contains $\mathscr{F E'}$. The space $V_{\rm con}$ is the subspace of $\mathscr S'(\rr d)$ such that the short-time Fourier transform (STFT) $V_\varphi f$ satisfies
$$
\sup_{|\eta|>B|x|} \eabs{(x,\eta)}^n |V_\varphi f(x,\eta)| < \infty \quad \forall n>0,
$$
for some $B>0$. This means that the STFT decays polynomially in a conic neighborhood of the frequency axis.

Recently Guo, Molahajloo and Wong have studied the instantaneous frequency and its relation to the modified Stockwell transform \cite{Guo1}. For other aspects of wave front sets and time-frequency analysis we refer to the recent papers \cite{Coriasco1,Coriasco2,Pilipovic1,Pilipovic2}.

\section{Preliminaries}

The Schwartz space $\mathscr S(\rr d)$ consists of smooth functions such that a derivative of any order multiplied by any polynomial is uniformly bounded. Its topological dual $\mathscr S'(\rr d)$ is the space of tempered distributions. We denote by $C_c^\infty (\rr d)$ the space of smooth and compactly supported functions, and $\mathscr D' (\rr d)$ is its topological dual, the space of distributions. The compactly supported distributions are denoted $\mathscr E'(\rr d)$. 

The normalization of the Fourier transform for functions $f\in \mathscr S(\rr d)$ used in this paper is
$$
\mathscr{F} f(\xi) = \wh f(\xi) = \int_{\rr d} f(x) e^{-2 \pi i x \cdot \xi} dx,
$$
where $x \cdot \xi$ denotes the inner product on $\rr d$. The inverse Fourier transform is then
$$
\mathscr{F}^{-1} f(x) = \int_{\rr d} f(\xi) e^{2 \pi i x \cdot \xi} d\xi.
$$
The Fourier transform extends by duality to a homeomorphism on $\mathscr S'(\rr d)$. For $s \in \ro$ the Sobolev space $H^s(\rr d)$ is defined as the subspace of $f \in \mathscr S'(\rr d)$ such that $\wh f \in L_{\rm loc}^2(\rr d)$ and
$$
\| f \|_{H^s} = \left( \int_{\rr d} \eabs{\xi}^{2s} | \wh f(\xi)|^2 d \xi  \right)^{1/2} < \infty
$$
where $\eabs{\xi}=(1+|\xi|^2)^{1/2}$.

Translation is denoted by $(T_y f)(x)=f(x-y)$ and modulation by $M_\xi f(x) = e^{2 \pi i x \cdot \xi} f(x)$ for functions of one $\rr d$ variable, and by $(T_{y,w} f)(x,z)=f(x-y,z-w)$, $M_{\xi,\eta} f(x,y) = e^{2 \pi i (x \cdot \xi + y \cdot \eta)} f(x,y)$, respectively, for functions of two $\rr d$ variables. The short-time Fourier transform (STFT) of $f \in \mathscr S' (\rr d)$ with respect to a window function $\varphi \in \mathscr S(\rr d)$ \cite{Grochenig1,Folland1} is defined by
\begin{equation}\nonumber
V_\varphi f(t,\xi) = ( f, M_\xi T_t \varphi )
\end{equation}
where $(\cdot,\cdot)$ denotes the conjugate (for consistency with the $L^2$-product) linear action of $\mathscr D' (\rr d)$ on $C_c^\infty (\rr d)$, or $\mathscr S'(\rr d)$ acting on $\mathscr S(\rr d)$.
Thus the inner product on $L^2$, also denoted by $(\cdot,\cdot)$, is conjugate linear in the second variable. We denote the \emph{linear} (without conjugation) action of distributions on test functions by $\langle \cdot, \cdot \rangle$, and thus $(u,\varphi)=\langle u,\overline{\varphi} \rangle$.

We use the symbol $C$ for a positive constant that may change value over inequalities and equalities.
The space $C_0(\rr d)$ consists of continuous functions that vanish at infinity. Thus $f \in C_0(\rr d)$ means that for any $\ep>0$ there exists a compact set $K_\ep \subset \rr d$ such that $x \notin K_\ep \Rightarrow |f(x)|<\ep$.
The symbol $C^k(\rr d)$ is the space of functions such that all partial derivatives of order not greater than $k$ are continuous everywhere, and $C^\infty(\rr d) = \bigcap_{k \geq 1} C^k (\rr d)$.

We recall the definition of the $C^\infty$ wave front set of $u \in \mathscr D'(\rr d)$ \cite{Folland1,Hormander1}, denoted  $WF(u)$. Let $\Gamma \subseteq \rr d \setminus 0$ denote an open conic subset, where conic means $\xi \in \Gamma \Rightarrow a \xi \in \Gamma$ for all $a>0$. The wave front set is defined as the complement
\begin{equation}\nonumber
\begin{aligned}
WF(u) & = \left( \rr d \times \rr d \setminus 0 \right) \setminus \\
& \Big\{ (x,\xi) : \ \exists \psi \in C_c^\infty(\rr d): \psi(x) \neq 0 ,
\exists \Gamma \subset \rr d \setminus 0: \ \xi \in \Gamma,  \\
& \sup_{\eta \in \Gamma} \eabs{\eta}^n |\wh{\psi u}(\eta)| < \infty \quad \forall n>0 \Big\}.
\end{aligned}
\end{equation}
We have $P_1 WF(u) = \sgsupp (u)$ where $P_1$ denotes the projection on the first $\rr d$ variable. The singular support $\sgsupp (u)$ is the complement of the largest open set where $u$ is $C^\infty$.

The cross-Wigner distribution is defined by
\begin{equation}\label{wd1}
\begin{aligned}
W_{f,g} (t,\xi) & = \int_{\rr d} f (t+\tau/2) \overline{g(t-\tau/2)} e^{-2 \pi i \tau \cdot \xi} d\tau \\
& = \left( \mathscr F _2 (f \otimes \overline{g} \circ \kappa) \right) (t,\xi), \quad f,g \in \mathscr S(\rr d), \quad t,\xi \in \rr d,
\end{aligned}
\end{equation}
where
$$
\kappa(x,y)=(x+y/2,x-y/2)
$$
and $\mathscr F_2$ denotes partial Fourier transformation in the second $\rr d$ variable. The Wigner distribution of a single function is denoted by $W_f=W_{f,f}$. The original definition \eqref{wd1} for $f,g \in \mathscr S(\rr d)$ extends to $f,g \in \mathscr S'(\rr d)$, and then $W_{f,g} \in \mathscr S'(\rr {2d})$.

The Wigner distribution has been studied thoroughly from many points of view, in particular quantum mechanics \cite{Cohen1,Folland1}, pseudo-differential calculus \cite{Folland1} and signal analysis \cite{Grochenig1}.
In signal analysis the Wigner distribution has been studied as a way to represent signals simultaneously in the time and frequency variables. The Wigner distribution satisfies many properties requested by an ideal time-frequency representation, among which the most important include the following, which holds for example when $f \in \mathscr S(\rr d)$.
\begin{align}
& W_{M_\eta T_x f} = T_{x,\eta} W_f, \quad x, \eta \in \rr d, \label{covariance1} \\
& \iint_{\rr {2d}} W_f(t,\xi) d t d \xi = \| f \|_{L^2(\rr d)}^2, \label{energy1} \\
& \int_{\rr d} W_f(t,\xi) d \xi = |f(t)|^2, \label{marginal1} \\
& \int_{\rr d} W_f(t,\xi) d t = |\wh f(\xi)|^2, \label{marginal2} \\
& \frac1{2 \pi} \nabla \arg f(t) = \frac{\int_{\rr d} \xi W_f(t,\xi) d\xi}{\int_{\rr d} W_f(t,\xi) d\xi} \quad \forall t \in \rr d: \ f(t) \neq 0. \label{instfreqwigner1}
\end{align}
Formula \eqref{covariance1} says that the Wigner distribution respects time-frequency shifts, \eqref{energy1} says that its integral equals the squared energy of the function, and \eqref{marginal1}, \eqref{marginal2} say that $W_f$ has the correct marginal properties. This admits the interpretation of $W_f$ as a distribution of the energy of $f$ over $(t,\xi) \in \rr {2d}$. Alternatively, in quantum mechanics, $W_f$ is interpreted as a phase space probability density. However, these interpretations are in general not possible, since $W_f$ is not nonnegative everywhere unless $f$ is a generalized Gaussian of the form
\begin{equation}\label{hudsongauss1}
f(t) = \exp(- \pi t \cdot A t + 2 \pi b \cdot t + c)
\end{equation}
where $c \in \co$, $b \in \cc d$, $A \in \cc {d \times d}$ is invertible and $\re A > 0$. This is Hudson's theorem \cite{Folland1,Grochenig1}. The function $f$ has Wigner distribution
\begin{equation}\nonumber
\begin{aligned}
W_f(t,\xi) & = C \exp\left( - 2 \pi \left( t \cdot \re A t - 2 \re b t \right) \right) \\
& \times \exp\left( - 2 \pi (\xi + \im A t - \im b) \cdot (\re A)^{-1} (\xi + \im A t - \im b) ) \right)
\end{aligned}
\end{equation}
and
\begin{equation}\nonumber
\frac1{2 \pi} \nabla \arg f(t) = -\im A t + \im b.
\end{equation}
Thus $W_f$ is a function mainly concentrated along the submanifold
$$
\{ (t, (2 \pi)^{-1} \nabla \arg f(t)): \ t \in \rr d \} \subseteq \rr {2d}.
$$
The formula \eqref{instfreqwigner1} expresses a generalization of this observation, in the sense that the instantaneous frequency is a normalized first order frequency moment of the Wigner distribution, for fixed $t \in \rr d$.
It is well known in the applied literature \cite{Cohen1} where it is derived without precise assumptions for $d=1$.

Janssen \cite{Janssen1,Janssen2} has studied the question whether the Wigner distribution (for $d=1$) may be concentrated on a curve in the phase space. Under some assumptions on the curve it turns out that it must be a straight line and the distribution $f$ is either a multiple of a Dirac distribution or a degenerate Gaussian of the form \eqref{hudsongauss1} with $\re A=0$. This means that $W_f$
is of the form $W_f(t,\xi) = C \delta_0(\xi - (2 \pi)^{-1} \nabla \arg f(t))$ (assuming $b=0$), i.e. supported on the subspace $\{(t,(2 \pi)^{-1} \nabla \arg f(t)): \ t \in \ro \} \subseteq \rr {2}$. For other functions the Wigner distribution gives a dispersion. However, \eqref{instfreqwigner1} shows that the mean value of $W_f(t,\cdot)$ for fixed $t \in \rr d$ agrees with intuition since it equals the instantaneous frequency.

This paper concerns assumptions that imply that \eqref{instfreqwigner1} holds true. It is relatively straightforward to relax $f \in \mathscr S(\rr d)$ into $f \in H^{d/2+1+\ep}(\rr d)$ where $\ep>0$ (see Proposition \ref{ifwignerprop1}), since $W_f$ is then a continuous function which admits restriction $W_f \mapsto W_f(t,\cdot)$ without problem. As another assumption we use $f \in \mathscr{F E'}(\rr d)$ (see Proposition \ref{ifwignerprop2}), which implies that $W_f \in \mathscr S'(\rr {2d})$. This case is more subtle since restriction of a distribution to a submanifold is not always possible. However, there are conditions on the wave front set that are sufficient for a restriction to be well defined and continuous. Hence we need to investigate the wave front set of the Wigner distribution. This problem is a generalization of the study of the singular support of $W_f$, considered by Janssen \cite{Janssen1,Janssen2}.

The paper is organized as follows. We investigate the wave front set of the Wigner distribution in Section \ref{wfw}. In Section \ref{restriction} we study sufficient conditions for the restriction operator $W_f \mapsto W_f(t,\cdot)$ to be well defined for $t \in \rr d$ fixed. Finally in Section \ref{wignermoment} we prove formula \eqref{instfreqformula1} for $f \in \mathscr {F E'}(\rr d)$ and for $f \in H^{d/2+1+\ep}(\rr d)$ where $\ep>0$.

\section{The wave front set of the Wigner distribution}\label{wfw}

Let $f \in \mathscr S'(\rr d)$. Two natural questions are to compare $\sgsupp (f)$ and $P_1 \sgsupp (W_f)$, and to compare
$WF(f)$ and $P_{1,3} WF(W_f)$. Here $P_1: \rr {2d} \mapsto \rr {d}$ denotes the projection on the first variable, $P_{1}(t,\xi)=t$, $t,\xi \in \rr d$, and $P_{1,3}: \rr {4d} \mapsto \rr {2d}$ denotes the projection on the first and third variables, $P_{1,3}(t,\xi; \eta,x)=(t,\eta)$, $t,\xi,\eta,x \in \rr d$.

Let $f \equiv 1$. Then $\sgsupp (f) = \emptyset$ and $WF(f) = \emptyset$. Moreover, $W_f = 1 \otimes \delta_0$ so $\sgsupp (W_f) = \rr d \times 0$ and $WF(W_f) = \{ (t,0; 0,x): \ t \in \rr d,\ x \in \rr d \setminus 0\}$ (see Example \ref{example1}). Hence $P_1 \sgsupp (W_f) = \rr d$, $P_{1,3} WF(W_f) = \rr d \times 0$. Thus
\begin{equation}\nonumber
\begin{aligned}
P_1 \sgsupp (W_f) & \nsubseteq \sgsupp (f), \\
P_{1,3} WF(W_f) & \nsubseteq WF(f), \quad f \in \mathscr S'(\rr d).
\end{aligned}
\end{equation}

We do not know whether any of the following inclusions hold for $f \in \mathscr S'(\rr d)$.
\begin{align}
\sgsupp (f) & \subseteq P_1 \sgsupp (W_f), \label{openproblem1} \\
WF(f) & \subseteq P_{1,3} WF(W_f). \label{openproblem2}
\end{align}
Note that the inclusion \eqref{openproblem2} is stronger than \eqref{openproblem1}. In fact, assume \eqref{openproblem2}. We have $\sgsupp (W_f) = P_{1,2} WF(W_f)$ and therefore $P_1 WF(W_f) = P_1 \sgsupp (W_f)$.
The assumption implies
\begin{equation}\nonumber
\begin{aligned}
\sgsupp (f) & = P_1 WF(f) \subseteq P_1 P_{1,3} WF(W_f) = P_1 WF(W_f) \\
& = P_1 \sgsupp (W_f),
\end{aligned}
\end{equation}
and thus \eqref{openproblem1} follows from \eqref{openproblem2}.

To produce a counterexample to the inclusion \eqref{openproblem1}, it suffices to find a function $f$ which is not $C^\infty$ at $t=0$ whose Wigner distribution $W_f$ is $C^\infty$ in a neighborhood of $(0,\xi)$ for all $\xi \in \rr d$. There is some weak evidence that such a function may exist: in fact the Wigner distribution is regularizing in the sense that
\begin{equation}\label{wignercont1}
W: L^2(\rr d) \times L^2(\rr d) \mapsto C_0(\rr {2d})
\end{equation}
continuously. To see this, let $(f_n)$ be a sequence of functions $f_n \in C_c^\infty(\rr d)$ such that $\| f-f_n \|_{L^2} \rightarrow 0$ as $n \rightarrow \infty$. Since $W_{f_n} \in \mathscr S(\rr {2d})$ and since $\| W_{f,g} \|_{L^\infty} \leq 2^d \| f \|_{L^2} \| g \|_{L^2}$ by the Cauchy--Schwarz inequality, we see that $W_f$ is the uniform limit of $C_0(\rr {2d})$ functions, and since $C_0(\rr {2d})$ equipped with the supremum norm is a Banach space we have $W_f \in C_0(\rr {2d})$.

A characterization of the wave front set $WF(f)$ in terms of the asymptotic behavior of $W_f$ for large frequencies is given in \cite[Corollary 3.28]{Folland1}. Let $\phi \in \mathscr S (\rr d)$ be even, nonzero and define the dilation $\phi^s(x) = s^{d/4} \phi(s^{1/2} x)$ for $s>0$. Then $(t_0,\xi_0) \notin WF(f)$ if and only if there exists a neighborhood $U$ of $(t_0,\xi_0)$, conic in the second variable, such that for any $a \geq 1$
\begin{equation}\label{wavefrontwigner1}
\sup_{(t,\xi) \in U, \ a^{-1} \leq |\xi| \leq a, \ s \geq 1} s^{n} |(W_f * W_{\phi^s}) (t,s\xi) | < \infty \quad \forall n \geq 1.
\end{equation}
This criterion says that the convolution $W_f * W_{\phi^s}$ decreases faster than any polynomial in the frequency direction in a neighborhood of $(t_0,\xi_0)$, conic in the second variable. Note that the function $W_{\phi^s}(t,\xi) = W_\phi (s^{1/2} t,s^{-1/2} \xi)$, with which $W_f$ is convolved, concentrates around zero in the $t$ variable and spreads out in the $\xi$ variable as $s \rightarrow +\infty$.

In particular we have that $f$ is smooth in a neighborhood of $t_0$ if and only if there exists a neighborhood $V$ of $t_0$ such that for any $a \geq 1$
\begin{equation}\nonumber
\sup_{t \in V, \ a^{-1} \leq |\xi| \leq a, \ s \geq 1} s^{n} |(W_f * W_{\phi^s}) (t,s\xi) | < \infty \quad \forall n \geq 1,
\end{equation}
which means that $W_f * W_{\phi^s}$ decreases faster than any polynomial in any frequency direction in a neighborhood of $t_0$.

The criterion \eqref{wavefrontwigner1} says that the wave front set $WF(f)$ can be characterized by $W_f$. Roughly speaking, the microregularity of $f$ is characterized by the asymptotic behavior of $W_f$ at infinity in conic frequency domains. (More precisely, this behavior concerns the convolution $W_f * W_{\phi^s}$ and not $W_f$.)

In the remainder of the this section we will investigate the wave front set of the Wigner distribution $W_f$ for $f \in \mathscr S'(\rr d)$. We introduce the following two subspaces of $\mathscr S'(\rr d)$ for this purpose. Here we understand by subspace a subset which is not necessarily linear\footnote{We do not know whether $WFW^\perp$ or $WFW^{\neq}$ are linear subspaces of $\mathscr S'$.}.

\begin{defn}
\begin{equation}\nonumber
WFW^\perp (\rr d) = \{ f \in \mathscr S'(\rr d): \ WF(W_f) \subseteq \rr {2d} \times (0 \times \rr d \setminus 0) \}.
\end{equation}
\end{defn}
Thus $WFW^\perp (\rr d)$ consists of the tempered distributions such that the wave front set of the Wigner distribution is directed purely in the frequency direction (or is empty).

\begin{defn}
\begin{equation}\nonumber
WFW^{\neq} (\rr d) = \{ f \in \mathscr S'(\rr d): \ WF(W_f) \cap \rr {2d} \times (\rr d \setminus 0 \times 0 )= \emptyset \}.
\end{equation}
\end{defn}
The definition says that $WFW^{\neq}(\rr d)$ consists of the tempered distributions such that the wave front set of the Wigner distribution does not contain vectors purely in the time direction. Obviously
\begin{equation}\nonumber
WFW^\perp (\rr d) \subseteq WFW^{\neq} (\rr d).
\end{equation}

The following example shows that for $f(t)=\exp(\pi i t \cdot A t)$, $A$ symmetric, we have
$$
f \in WFW^{\neq} (\rr d) \setminus WFW^\perp (\rr d), \quad A \in \rr {d \times d} \setminus 0.
$$

\begin{example}\label{example1}
Let $f(t)=\exp(\pi i t \cdot A t)$ where $A \in \rr {d \times d}$ is a symmetric matrix ($f$ is sometimes called a \emph{chirp} \cite{Grochenig1}). Then
\begin{equation}\nonumber
W_f(t,\xi) = \int_{\rr d} \exp(-2 \pi i \tau \cdot( \xi - A t) ) d \tau = \delta_0(\xi-A t),
\end{equation}
\begin{equation}\nonumber
\sgsupp (W_f) = \{ (t, A t), \ t \in \rr d \}.
\end{equation}
Since the transformation $T: \rr {2d} \mapsto \rr d$, $T(t,\xi)=\xi-A t$ has surjective differential, the distribution $W_f=\delta_0 \circ T \in \mathscr D'(\rr {2d})$ is well defined \cite[Theorem 6.1.2]{Hormander1}.
Let $\varphi \in C_c^\infty(\rr {2d})$ and let $(\psi_n) \subset C_c^\infty(\rr d)$ be a sequence converging in $\mathscr D'(\rr d)$ to $\delta_0$. The continuity statement of \cite[Theorem 6.1.2]{Hormander1} gives
\begin{equation}\nonumber
\begin{aligned}
\wh {\varphi W_f}(\eta,x) & = \langle \delta_0 \circ T, M_{-\eta,-x} \varphi \rangle
= \lim_{n \rightarrow \infty} \langle \psi_n \circ T, M_{-\eta,-x} \varphi \rangle \\
& = \lim_{n \rightarrow \infty} \iint_{\rr {2d}} \psi_n(\xi-A t) \varphi(t,\xi) e^{- 2 \pi i (\eta \cdot t + x \cdot \xi)} dt d\xi \\
& = \lim_{n \rightarrow \infty} \iint_{\rr {2d}} \psi_n(\xi) \varphi(t,\xi+A t) e^{- 2 \pi i \left( t \cdot (\eta + A x) + x\cdot \xi \right) } dt d\xi \\
& = \int_{\rr d} \varphi(t,A t) e^{- 2 \pi i t \cdot (\eta + A x) } dt \\
& = \wh \chi (\eta + A x)
\end{aligned}
\end{equation}
where $\chi(t) = \varphi(t,A t) \in C_c^\infty(\rr d)$.

Let $(t,\xi) \in \sgsupp (W_f)$, i.e. $\xi=At$ and suppose $\varphi(t,A t) \neq 0$.
If $\eta_0 + A x_0 \neq 0$ then there is a conic neighborhood $\Gamma$ containing $(\eta_0,x_0)$ such that $| \eta + A x | \geq C>0$ when $|(\eta,x)|=1$ and $(\eta,x) \in \Gamma$. This gives $| \eta + A x | \geq C | (\eta,x)|$ for $(\eta,x) \in \Gamma$ and thus, using the fact that $\wh \chi \in \mathscr S(\rr d)$,
\begin{equation}\nonumber
\sup_{(\eta,x) \in \Gamma} |(\eta,x)|^n |\wh {\varphi W_f}(\eta,x)| \leq C_n \sup_{(\eta,x) \in \Gamma} |(\eta,x)|^n | \eta + A x |^{-n} < \infty
\end{equation}
for any $n>0$, so $(t,A t; \eta_0,x_0) \notin WF(W_f)$.

On the other hand, we may use the following result \cite{Hormander1} for $u \in \mathscr D'(\rr d)$ and $\varphi_j \in C_c^\infty(\rr d)$. If $\varphi_j(x) \neq 0$ for all $j \geq 1$, $\supp(\varphi_j) \rightarrow \{ x\}$, and $\wh{ \varphi_j u}$ does not decay polynomially in any conical neighborhood of $\xi$ for any $j$, then $(x,\xi) \in WF(u)$.

Let $\varphi$ satisfy $\varphi(t,A t) \neq 0$ and $\varphi \geq 0$. Then $\wh \chi(0) = \int \varphi(u,A u) du > 0$ and $\wh{\varphi W_f}(-Ax,x) = \wh \chi(0) \neq 0$ for any $x \in \rr d$. Therefore $\wh{\varphi W_f}$ is not polynomially decreasing in any conic neighborhood of the manifold $\{ (\eta,x): \ \eta+Ax = 0 \}$. By shrinking the support of $\varphi$ we may thus conclude
\begin{equation}\nonumber
WF(W_f) = \{ (t, A t; -A x,x), \ t \in \ro,\ x \in \ro \setminus 0 \}.
\end{equation}
For $A=0$ we thus have $f \in WFW^\perp (\rr d)$ but if $A \neq 0$ then $f \in WFW^{\neq} (\rr d) \setminus WFW^\perp (\rr d)$.
\end{example}

We are interested in $WFW^{\neq} (\rr d)$ since $f \in WFW^{\neq} (\rr d)$ is a sufficient condition for the restriction operator $W_f \mapsto W_f(t,\cdot)$ to be well defined from $\mathscr D'(\rr {2d})$ to $\mathscr D'(\rr d)$ for all $t \in \rr d$ (see Section \ref{restriction}). We are interested in $WFW^\perp (\rr d)$ since it is easier to prove inclusions of familiar spaces in $WFW^\perp (\rr d)$.

Next we define a linear space of smooth functions, whose derivatives are slowly increasing, uniformly with respect to the order of derivation.

\begin{defn}
The space $C_{\rm slow}^\infty(\rr d)$ consists of smooth functions such that a derivative of any order $\alpha \in \nn d$ is bounded by a constant $C_\alpha$ times a fixed polynomial, that is $f \in C_{\rm slow}^\infty(\rr d)$ if there exists $N>0$ such that
\begin{equation}\label{conditionf}
\vert \partial^\alpha f(x)\vert\leq C_\alpha \langle x\rangle^N \quad \forall x \in \rr d, \  C_\alpha>0, \ \alpha \in \nn d.
\end{equation}
\end{defn}

Note that $C_{\rm slow}^\infty(\rr d) \subseteq \mathcal O_M^d(\rr d)$, where $\mathcal O_M^d(\rr d)$ is the space of smooth functions such that the derivatives satisfy \eqref{conditionf} where $N$ may depend on $\alpha$. For more information on $\mathcal O_M^d(\rr d)$ we refer to \cite{Reed1}.

\begin{lem}\label{fouriercompact1}
\begin{equation}\nonumber
\mathscr F \mathscr E'(\rr d) \subseteq C_{\rm slow}^\infty(\rr d).
\end{equation}
\end{lem}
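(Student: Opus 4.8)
The plan is to derive the estimate \eqref{conditionf} directly from the definition of the Fourier transform of a compactly supported distribution together with the fact that such a distribution has finite order; this amounts to making quantitative the easy half of the Paley--Wiener--Schwartz theorem. Write $f = \mathscr F u$ with $u \in \mathscr E'(\rr d)$ and fix $R>0$ with $\supp u \subseteq \{ |x| \leq R \}$. First I would choose a cutoff $\chi \in C_c^\infty(\rr d)$ with $\chi \equiv 1$ on a neighborhood of $\{ |x| \leq R \}$ and record that, $u$ having some finite order $m \in \no$, there is a constant $C>0$ with
$$
|\langle u, \varphi \rangle| \leq C \sum_{|\beta| \leq m} \sup_{x \in \supp \chi} |\partial^\beta \varphi(x)|, \qquad \varphi \in C^\infty(\rr d).
$$

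Next I would use that $f(\xi) = \langle u, e^{-2\pi i x \cdot \xi} \rangle = \langle u, \chi(x) e^{-2\pi i x \cdot \xi} \rangle$ (these agree since $u$ is supported where $\chi = 1$) and that $\xi \mapsto \big( x \mapsto \chi(x) e^{-2\pi i x \cdot \xi} \big)$ is a smooth map of $\rr d$ into $C_c^\infty(\rr d)$ with fixed support; differentiating under the bracket (which in particular shows $f \in C^\infty(\rr d)$) gives
$$
\partial_\xi^\alpha f(\xi) = \big\langle u, \chi(x) (-2\pi i x)^\alpha e^{-2\pi i x \cdot \xi} \big\rangle, \qquad \alpha \in \nn d.
$$
Inserting this into the order-$m$ seminorm estimate and expanding $\partial_x^\beta \big( \chi(x)(-2\pi i x)^\alpha e^{-2\pi i x \cdot \xi} \big)$ by the Leibniz rule, I expect every term to be a product of a derivative of $\chi$, a derivative of the polynomial $(-2\pi i x)^\alpha$, and a factor $(-2\pi i \xi)^\gamma e^{-2\pi i x \cdot \xi}$ with $|\gamma| \leq |\beta| \leq m$. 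Bounding the $x$-dependent factors by a constant $C_\alpha$ on the fixed compact set $\supp \chi$ and using $|e^{-2\pi i x \cdot \xi}| = 1$, each term is at most $C_\alpha |\xi|^{|\gamma|} \leq C_\alpha \eabs{\xi}^m$, so that $|\partial^\alpha f(\xi)| \leq C_\alpha \eabs{\xi}^m$ for all $\xi$, which is \eqref{conditionf} with $N = m$.

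I do not expect a genuine obstacle here; the one point to watch is that the exponent $N$ may be taken equal to $m$, the order of $u$, independently of $\alpha$. This is because the $\xi$-derivatives act only on the polynomial $(-2\pi i x)^\alpha$ — contributing an $\alpha$-dependent constant on $\supp \chi$ but no growth in $\xi$ — while the powers of $\xi$ come solely from the at most $m$ $x$-derivatives forced by the order of $u$. This uniformity in $\alpha$ is exactly what distinguishes $C_{\rm slow}^\infty(\rr d)$ from the larger space $\mathcal O_M^d(\rr d)$, and it is automatic from the finiteness of the order of a compactly supported distribution.
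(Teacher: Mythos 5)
Your argument is correct and is essentially the proof in the paper: both use the finite order $N$ of $u \in \mathscr E'(\rr d)$, the corresponding seminorm estimate over a compact set containing $\supp u$, differentiation of $f = \mathscr F u$ under the duality bracket, and the key observation that the powers of the Fourier variable come only from the at most $N$ derivatives of the exponential forced by the order of $u$, giving the exponent $N$ uniformly in $\alpha$. The explicit cutoff $\chi$ you introduce is a harmless bookkeeping device that the paper skips, but the substance and the cited facts (smooth dependence of the test function family, finite-order seminorm bound) are the same.
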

\begin{proof}
Let $f \in \mathscr F \mathscr E'(\rr d)$ which means that there exists $u \in \mathscr E'(\rr d)$ such that $f(x) = (u,\phi_x)$ where $\phi_x (\xi)=\exp(2 \pi i x \cdot \xi)$ \cite[Theorem 7.1.14]{Hormander1}. Denote by $N$ the finite order of the distribution $u \in \mathscr E'(\rr d)$. By \cite[Theorem 2.1.3]{Hormander1} we have $\pd \alpha  f(x) = (u,\pdd x \alpha \phi_x)$ for any $\alpha \in \nn d$. For some compact set $K \subset \rr d$ containing the support of $u$, this yields
\begin{equation}\nonumber
\begin{aligned}
|\pd \alpha  f(x)| = |(u,\pdd x \alpha \phi_x)|
& \leq C_\alpha \sum_{|\beta| \leq N}  \sup_{\xi \in K} | \pdd x \alpha \pdd \xi \beta \phi_x (\xi) | \\
& \leq C_\alpha \eabs{x}^N, \quad \alpha \in \nn d.
\end{aligned}
\end{equation}
\end{proof}

\begin{rem}
If $f \equiv 1 = \mathscr F \delta_0$ then $W_f = 1 \otimes \delta_0$, and $WF(W_f) = (\rr d \times 0) \times (0 \times \rr d \setminus 0)$ according to Example \ref{example1}. This shows that the wave front set of the Wigner distribution of a function $f \in \mathscr F \mathscr E'(\rr d)$ in general is nonempty.
\end{rem}

\begin{rem}
Note that the inclusion $\mathscr F \mathscr E'(\rr d) \subseteq C_{\rm slow}^\infty(\rr d)$ is strict. For instance $C_c^\infty(\rr d) \subseteq C_{\rm slow}^\infty(\rr d) \setminus \mathscr F \mathscr E'(\rr d)$.
\end{rem}

Next we shall make some preparations in order to introduce a linear space which is larger than $C_{\rm slow}^\infty(\rr d)$.

\begin{lem}\label{wignerfourier1}
If $f \in \mathscr S'(\rr d)$ and $\varphi \in \mathscr S(\rr d)$ then
\begin{equation}\nonumber
\wh {W_f W_\varphi} (\eta,x) = V_\varphi f(-x/2,\eta/2) \overline{ V_\varphi f(x/2,-\eta/2) }, \quad \eta,x \in \rr d.
\end{equation}
\end{lem}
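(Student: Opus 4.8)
The plan is to establish the identity first for $f \in \mathscr S(\rr d)$ by a direct computation with absolutely convergent integrals, and then to pass to $f \in \mathscr S'(\rr d)$ by testing against $\psi \in \mathscr S(\rr {2d})$, rewriting each side as the action of the distribution $f \otimes \overline f \in \mathscr S'(\rr {2d})$ on a fixed Schwartz function, and comparing the two via polarization. This route avoids any discussion of continuity of the conjugate-quadratic map $f \mapsto W_f W_\varphi$ on $\mathscr S'$.

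For $f, \varphi \in \mathscr S(\rr d)$ the product $W_f W_\varphi$ lies in $\mathscr S(\rr {2d})$ and all integrals below converge absolutely. I would first Fourier transform in the frequency variable $\xi$. Writing $W_f(t,\cdot) = \mathscr F(F_t)$ and $W_\varphi(t,\cdot) = \mathscr F(G_t)$ with $F_t(\tau) = f(t+\tau/2)\overline{f(t-\tau/2)}$ and $G_t(s) = \varphi(t+s/2)\overline{\varphi(t-s/2)}$, the identity $\mathscr F(\mathscr F A \cdot \mathscr F B)(x) = (A * B)(-x)$ gives
\begin{equation}\nonumber
\int_{\rr d} W_f(t,\xi) W_\varphi(t,\xi) e^{-2\pi i \xi \cdot x} \, d\xi = \int_{\rr d} f(t-(x+s)/2) \overline{f(t+(x+s)/2)} \, \varphi(t+s/2) \overline{\varphi(t-s/2)} \, ds .
\end{equation}
I would then multiply by $e^{-2\pi i t \cdot \eta}$, integrate in $t$, and perform the substitution $(s,t) \mapsto (u,v)$ with $u = t-(x+s)/2$, $v = t+(x+s)/2$, which has Jacobian one and turns the arguments of $\varphi$, $\overline\varphi$ into $v - x/2$, $u + x/2$ and the exponential into $e^{-\pi i (u+v) \cdot \eta}$. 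The resulting double integral
\begin{equation}\nonumber
\iint_{\rr {2d}} f(u) \overline{f(v)} \, \overline{\varphi(u+x/2)} \, \varphi(v-x/2) \, e^{-\pi i (u+v) \cdot \eta} \, du \, dv
\end{equation}
is exactly $V_\varphi f(-x/2,\eta/2) \, \overline{V_\varphi f(x/2,-\eta/2)}$, as one sees by inserting $V_\varphi f(p,q) = \int f(u) \overline{\varphi(u-p)} e^{-2\pi i u \cdot q} \, du$ with $(p,q) = (-x/2,\eta/2)$ in the first factor and $(p,q) = (x/2,-\eta/2)$ in the second and conjugating the latter. This proves the lemma for $f, \varphi \in \mathscr S(\rr d)$.

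For the extension, fix $\varphi \in \mathscr S(\rr d)$ and $\psi \in \mathscr S(\rr {2d})$, and put $\Theta_{(\eta,x)}(u,v) = \overline{\varphi(u+x/2)} \varphi(v-x/2) e^{-\pi i (u+v) \cdot \eta}$. On the one hand, $\langle \wh{W_f W_\varphi}, \psi \rangle = \langle W_f, W_\varphi \wh\psi \rangle$, and inserting the representation $W_f = \mathscr F_2\big((f \otimes \overline f) \circ \kappa\big)$ from \eqref{wd1}, moving $\mathscr F_2$ onto the test function, and changing variables (with $|\det \kappa| = 1$) yields $\langle f \otimes \overline f, T_1 \rangle$ with $T_1 = \big(\mathscr F_2(W_\varphi \wh\psi)\big) \circ \kappa^{-1} \in \mathscr S(\rr {2d})$. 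On the other hand, since $\Theta_{(\eta,x)}$ factors as a function of $u$ times a function of $v$, the definitions of the tensor product and of the STFT give $\langle f \otimes \overline f, \Theta_{(\eta,x)} \rangle = V_\varphi f(-x/2,\eta/2) \, \overline{V_\varphi f(x/2,-\eta/2)}$ for every $f \in \mathscr S'(\rr d)$; integrating in $(\eta,x)$ against $\psi$ and pulling $f \otimes \overline f$ out of the integral (legitimate because $(\eta,x) \mapsto \Theta_{(\eta,x)}$ is a smooth, polynomially bounded $\mathscr S(\rr {2d})$-valued map and $\psi$ is Schwartz) produces $\langle f \otimes \overline f, T_2 \rangle$ with $T_2 = \iint_{\rr {2d}} \Theta_{(\eta,x)} \psi(\eta,x) \, d\eta \, dx \in \mathscr S(\rr {2d})$. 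By the Schwartz case, $\langle f \otimes \overline f, T_1 \rangle = \langle f \otimes \overline f, T_2 \rangle$ for all $f \in \mathscr S(\rr d)$; the polarization identity $f \otimes \overline g = \tfrac14 \sum_{k=0}^3 i^k (f + i^k g) \otimes \overline{(f + i^k g)}$ then forces $\langle f \otimes \overline g, T_1 - T_2 \rangle = 0$ for all $f,g \in \mathscr S(\rr d)$, and density of the span of these tensor products in $\mathscr S(\rr {2d})$ gives $T_1 = T_2$. Hence $\langle \wh{W_f W_\varphi}, \psi \rangle$ equals the claimed right-hand side paired with $\psi$ for all $f \in \mathscr S'(\rr d)$ and all $\psi \in \mathscr S(\rr {2d})$, and since the right-hand side is a continuous function the identity holds pointwise.

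I expect the only genuine obstacle to be the first step — keeping track of the factors $1/2$ and the signs in the substitution so that the double integral comes out precisely as the product of the two short-time Fourier transforms; everything after that is formal. In the extension the sole non-bookkeeping point is the interchange of $f \otimes \overline f$ with the parameter integral over $(\eta,x)$, which is routine (for example by approximating the integral by Riemann sums converging in $\mathscr S(\rr {2d})$).
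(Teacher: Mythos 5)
Your argument is correct, but it takes a genuinely longer route than the paper's. The paper proves the identity in a single pass valid for all $f \in \mathscr S'(\rr d)$: it computes the test function $\left(\mathscr F_2^{-1}(M_{\eta,x} W_\varphi)\right) \circ \kappa^{-1}(y,z) = M_{\eta/2}T_{-x/2}\varphi(y)\,\overline{M_{-\eta/2}T_{x/2}\varphi(z)}$ explicitly, and then reads off the result from $\wh{W_f W_\varphi}(\eta,x) = (W_f, M_{\eta,x}W_\varphi) = (f\otimes\overline f, \ \mathscr F_2^{-1}(M_{\eta,x}W_\varphi)\circ\kappa^{-1})$, since the test function factors as a tensor. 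The whole burden is the computation of one Schwartz function, and no density, polarization, or passage to the limit is needed; in particular the pointwise (rather than distributional) identity is immediate because both sides are computed as a single action of $f\otimes\overline f$ on a Schwartz function. Your route instead proves the Schwartz case by convolution-theorem manipulations and then extends to $\mathscr S'$ by pairing against $\psi\in\mathscr S(\rr{2d})$, passing the integral through $f\otimes\overline f$, polarizing, and invoking density of tensor products; this works and the bookkeeping in your substitution $(s,t)\mapsto(u,v)$ is right, but you are reproving, by approximation, an identity between two Schwartz functions ($T_1=T_2$) that the paper obtains by direct computation. You also need to justify the $\mathscr S$-valued vector integral and the polynomial growth of $(\eta,x)\mapsto\Theta_{(\eta,x)}$ in the seminorms of $\mathscr S(\rr{2d})$, which you flag but do not carry out; both are routine, but they are extra steps the paper's computation makes unnecessary. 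As a minor bonus, your Schwartz-case calculation is a readable alternative derivation that makes the appearance of the two STFT factors very explicit.
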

\begin{proof}
We compute
\begin{equation}\nonumber
\begin{aligned}
(\mathscr F_2^{-1} ( M_{\eta,x} W_\varphi )) \circ \kappa^{-1}(y,z) & = \int_{\rr d} e^{2 \pi i ( \xi \cdot (y-z) + \eta \cdot(y+z)/2 + x \cdot \xi)} W_\varphi \left( \frac{y+z}{2},\xi \right) d \xi \\
& = e^{\pi i \eta \cdot(y+z)} \mathscr F_2^{-1} W_\varphi \left( \frac{y+z}{2},y-z+x \right) \\
& = e^{\pi i \eta \cdot(y+z)} (\varphi \otimes \overline{\varphi} \circ \kappa) \left( \frac{y+z}{2},y-z+x \right) \\
& = e^{\pi i \eta \cdot(y+z)} \varphi (y+x/2) \overline{\varphi(z-x/2)} \\
& = M_{\eta/2} T_{-x/2} \varphi(y) \overline{M_{-\eta/2} T_{x/2} \varphi(z)}.
\end{aligned}
\end{equation}
This gives, using \cite[Lemma 7.4.1]{Hormander1} and the fact that $W_\varphi$ is real-valued \cite{Grochenig1},
\begin{equation}\nonumber
\begin{aligned}
\wh {W_f W_\varphi} (\eta,x) & = ( W_f, M_{\eta,x} W_\varphi ) \\
& = ( f \otimes \overline{f} \circ \kappa, \mathscr F_2^{-1}(  M_{\eta,x} W_\varphi ) ) \\
& = ( f \otimes \overline{f}, \mathscr F_2^{-1}(  M_{\eta,x} W_\varphi ) \circ \kappa^{-1} ) \\
& = ( f \otimes \overline{f}, M_{\eta/2} T_{-x/2} \varphi \otimes \overline{M_{-\eta/2} T_{x/2} \varphi} ) \\
& = V_\varphi f(-x/2,\eta/2) \overline{ V_\varphi f(x/2,-\eta/2) }.
\end{aligned}
\end{equation}
\end{proof}

\begin{lem}\label{convolutiondecay1}
Let $g \in C^\infty(\rr {2d})$ and suppose
\begin{equation}\nonumber
| g(t,\xi)| \leq C \eabs{(t,\xi)}^N \quad \mbox{for some $N>0$ and all $t,\xi \in \rr d$},
\end{equation}
$$
\Gamma = \{(t,\xi): |\xi|>C|t| \} \quad \mbox{for some $C>0$, and}
$$
\begin{equation}\nonumber
\sup_{(t,\xi) \in \Gamma} \eabs{(t,\xi)}^n | g(t,\xi) | < \infty \quad \forall n>0.
\end{equation}
If $\varphi \in \mathscr S(\rr {2d})$ then we have for any $C'>C$ and $\Gamma' = \{(t,\xi): |\xi|>C'|t| \}$
\begin{equation}\label{decaycone1}
\sup_{(t,\xi) \in \Gamma'} \eabs{(t,\xi)}^n | g * \varphi (t,\xi) | < \infty \quad \forall n>0.
\end{equation}
\end{lem}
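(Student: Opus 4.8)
The plan is to show that convolution with a Schwartz function does not destroy the rapid decay of $g$ in a slightly narrower cone, while keeping the polynomial-growth bound under control off the cone. The key geometric fact is that the set $\Gamma$ where $g$ decays rapidly is conic, and for $C'>C$ the narrower cone $\Gamma'$ is "interior" to $\Gamma$ in the quantitative sense that there is a $\delta>0$ and a radius $R>0$ such that whenever $(t,\xi)\in\Gamma'$ with $|(t,\xi)|\ge R$ and $|(s,\zeta)|\le\delta|(t,\xi)|$, the shifted point $(t-s,\xi-\zeta)$ lies in $\Gamma$ and moreover $\eabs{(t-s,\xi-\zeta)}\ge c\,\eabs{(t,\xi)}$ for some $c>0$. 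This is the standard "cone inside a cone" lemma and is the main geometric ingredient.

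First I would fix $(t,\xi)\in\Gamma'$ with $|(t,\xi)|$ large, and split the convolution integral
\begin{equation}\nonumber
g*\varphi(t,\xi) = \int_{\rr{2d}} g(t-s,\xi-\zeta)\,\varphi(s,\zeta)\,ds\,d\zeta
\end{equation}
into the region $A = \{(s,\zeta): |(s,\zeta)|\le\delta|(t,\xi)|\}$ and its complement $B$. On $A$, by the geometric fact above we have $(t-s,\xi-\zeta)\in\Gamma$, so the hypothesis gives $|g(t-s,\xi-\zeta)|\le C_m\eabs{(t-s,\xi-\zeta)}^{-m}\le C_m c^{-m}\eabs{(t,\xi)}^{-m}$ for every $m$; since $\varphi\in L^1$ this part is bounded by $C_m\eabs{(t,\xi)}^{-m}$. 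On $B$ I would use the polynomial bound $|g(t-s,\xi-\zeta)|\le C\eabs{(t-s,\xi-\zeta)}^N\le C\eabs{(t,\xi)}^N\eabs{(s,\zeta)}^N$ (Peetre's inequality) together with the rapid decay of $\varphi$, which on $B$ carries a factor $\eabs{(s,\zeta)}^{-k}$ with $k$ as large as we like; choosing $k$ large enough and using $|(s,\zeta)|>\delta|(t,\xi)|$ converts the $\eabs{(t,\xi)}^N$ factor and more into arbitrary negative powers of $\eabs{(t,\xi)}$. Combining the two estimates yields \eqref{decaycone1}.

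The main obstacle is purely the bookkeeping in the geometric step: one must verify that $|\xi-\zeta| > C|t-s|$ follows from $|\xi|>C'|t|$, $C'>C$, and $|(s,\zeta)|$ small relative to $|(t,\xi)|$, and simultaneously that $\eabs{(t-s,\xi-\zeta)}$ is comparable to $\eabs{(t,\xi)}$ on the region $A$; both come from elementary triangle-inequality estimates once $\delta$ is chosen small enough (depending only on $C$ and $C'$), and Peetre's inequality $\eabs{a+b}\le 2^{|N|}\eabs{a}^{|N|}\eabs{b}^{|N|}$ handles the growth bookkeeping. Everything else is routine estimation of integrals of Schwartz functions.
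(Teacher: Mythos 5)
Your proof is correct and follows essentially the same strategy as the paper's: split the convolution integral into a near region, where the shifted argument $(t-s,\xi-\zeta)$ remains inside the wider cone $\Gamma$ so the rapid decay of $g$ applies, and a far region, where the Schwartz decay of $\varphi$ overwhelms the polynomial growth of $g$. The only real difference is the cutoff: you take $|(s,\zeta)|\le\delta|(t,\xi)|$ with $\delta$ small (which then also requires the lower bound $\eabs{(t-s,\xi-\zeta)}\ge c\,\eabs{(t,\xi)}$ to extract decay), while the paper cuts at $\eabs{(x,\eta)}\le\eabs{(t,\xi)}^{1/2}$ and gets by with the Peetre-type estimate $\eabs{(t-x,\xi-\eta)}^{-n}\le\eabs{(t,\xi)}^{-n}\eabs{(x,\eta)}^{n}$; both choices lead to the same conclusion.
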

\begin{proof}
We have
\begin{equation}\nonumber
\begin{aligned}
|g * \varphi (t,\xi)| \leq & \iint_{\eabs{(x,\eta)} \leq \eabs{(t,\xi)}^{1/2}} |g(t-x,\xi-\eta)| | \varphi (x,\eta)| dx d\eta \\
& \quad + \iint_{\eabs{(x,\eta)} > \eabs{(t,\xi)}^{1/2}} |g(t-x,\xi-\eta)| |\varphi (x,\eta)| dx d\eta \\
& := I_1 + I_2.
\end{aligned}
\end{equation}
Consider the first integral $I_1$.
For any $C'>C$ and the corresponding cone $\Gamma' = \{(t,\xi): \ |\xi|>C'|t| \} \subset \Gamma$ we have: $(t,\xi) \in \Gamma'$ and $\eabs{(x,\eta)} \leq \eabs{(t,\xi)}^{1/2}$ imply that $(t-x,\xi-\eta) \in \Gamma$ provided that $\eabs{(t,\xi)}$ is sufficiently large.
In fact, these assumptions imply
$$
\frac{|\xi - \eta|}{|t-x|} \geq \frac{|\xi| - \eabs{(t,\xi)}^{1/2}}{C'^{-1} |\xi| + \eabs{(t,\xi)}^{1/2}} = C' \frac{|\xi| - \eabs{(t,\xi)}^{1/2}}{ |\xi| + C' \eabs{(t,\xi)}^{1/2} }
$$
and the quotient approaches one as $\eabs{(t,\xi)} \rightarrow \infty$, because $|\xi|>C'|t|$. Thus we have for any integer $n \geq 0$
\begin{equation}\label{intuppsk1a}
\begin{aligned}
I_1 & \leq C_n \iint_{\eabs{(x,\eta)} \leq \eabs{(t,\xi)}^{1/2}} \eabs{(t-x,\xi-\eta)}^{-n} |\varphi (x,\eta)| dx d\eta \\
& \leq C_n \eabs{(t,\xi)}^{-n} \iint_{\rr {2d}} \eabs{(x,\eta)}^{n} |\varphi (x,\eta)| dx d\eta \\
& \leq C_n  \eabs{(t,\xi)}^{-n}, \quad (t,\xi) \in \Gamma', \quad C_n>0,
\end{aligned}
\end{equation}
provided $\eabs{(t,\xi)}$ is sufficiently large.

Next let us look at the second integral $I_2$. We have for any $L>0$
\begin{equation}\label{intuppsk2a}
\begin{aligned}
I_2 & \leq C_L \iint_{\eabs{(x,\eta)} > \eabs{(t,\xi)}^{1/2}} \eabs{(t-x,\xi-\eta)}^{N} \eabs{(x,\eta)}^{-L} dx d\eta \\
& \leq C_L \eabs{(t,\xi)}^{N} \iint_{\eabs{(x,\eta)} > \eabs{(t,\xi)}^{1/2}} \eabs{(x,\eta)}^{N-L} dx d\eta \\
& = C_L \eabs{(t,\xi)}^{N} \iint_{\eabs{(x,\eta)} > \eabs{(t,\xi)}^{1/2}} (1+|x|^2+|\eta|^2)^{(N-L)/2} dx d\eta \\
& = C_L \eabs{(t,\xi)}^{N} \int_{r > (\eabs{(t,\xi)}-1)^{1/2}} (1+r^2)^{(N-L)/2} r^{2d-1} dr \\
& \leq C_L \eabs{(t,\xi)}^{N} \int_{r > (\eabs{(t,\xi)}-1)^{1/2}} r^{N-L+2d-1} dr \\
& \leq C_L \eabs{(t,\xi)}^{3N/2-L/2+d}
\end{aligned}
\end{equation}
provided that $L>N+2d$ and $\eabs{(t,\xi)}$ is sufficiently large.
Since $L>0$ is arbitrary, \eqref{intuppsk1a} and \eqref{intuppsk2a} prove \eqref{decaycone1}, that is $g * \varphi(t,\xi)$ decays rapidly (polynomially) for $(t,\xi) \in \Gamma'$.
\end{proof}

\begin{rem}\label{permutvar1}
Obviously Lemma \ref{convolutiondecay1} is invariant under a change of roles of the variables, that is, with cones $\Gamma,\Gamma'$ of the form $\Gamma = \{(t,\xi): |t|>C|\xi| \}$, $C>0$.
\end{rem}

We are now prepared to define the following linear subspace of $\mathscr S'(\rr d)$.

\begin{defn}
Let $\varphi \in \mathscr S(\rr d) \setminus 0$.
\begin{equation}\label{conedef1}
\begin{aligned}
& V_{\rm con}^{\varphi} (\rr d) = \\
& \{ f \in \mathscr S'(\rr d): \ \exists B>0 : \sup_{|\eta|>B|x|} \eabs{ (x,\eta) }^n |V_\varphi f(x,\eta)| < \infty \quad \forall n>0 \}.
\end{aligned}
\end{equation}
\end{defn}
According to this definition $V_{\rm con}^{\varphi} (\rr d)$ consists of tempered distributions such that the STFT decays rapidly in some conical neighborhood of the frequency axis of the form $\{ (x,\eta) \in \rr {2d}: \ |\eta|>B|x| \}$. Obviously $V_{\rm con}^{\varphi} (\rr d)$ is a linear space. A priori $V_{\rm con}^{\varphi} (\rr d)$ depends on the window function $\varphi \in \mathscr S(\rr d)$, but the next lemma shows that this is in fact not the case.

\begin{lem}\label{coneindependenttest1}
If $f \in V_{\rm con}^{\varphi} (\rr d)$ then $f \in V_{\rm con}^{\psi} (\rr d)$ for any $\psi \in \mathscr S(\rr d) \setminus 0$.
\end{lem}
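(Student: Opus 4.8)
The plan is to use the standard change-of-window formula for the STFT. Recall that for $\varphi, \psi, \gamma \in \mathscr S(\rr d)$ with $(\gamma, \varphi) \neq 0$ one has the reproducing/re-expansion identity
\begin{equation}\nonumber
V_\psi f (x,\eta) = \frac{1}{(\gamma,\varphi)} \left( V_\varphi f \ast_{\rm TF} \Phi \right)(x,\eta), \qquad \Phi(x,\eta) = V_\psi \gamma(x,\eta),
\end{equation}
where $\ast_{\rm TF}$ denotes the twisted convolution on $\rr {2d}$ arising from the composition of time-frequency shifts; in any case the key point is the pointwise estimate
\begin{equation}\nonumber
|V_\psi f(x,\eta)| \leq \frac{1}{|(\gamma,\varphi)|} \iint_{\rr {2d}} |V_\varphi f(y,\zeta)| \, |V_\psi \gamma(x-y,\eta-\zeta)| \, dy \, d\zeta,
\end{equation}
valid for $f \in \mathscr S'(\rr d)$ once $\gamma$ is chosen so that $(\gamma,\varphi) = 1$ (which is possible since $\varphi \neq 0$). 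Since $\psi, \gamma \in \mathscr S(\rr d)$ we have $V_\psi \gamma \in \mathscr S(\rr {2d})$, so the right-hand side is a convolution of $|V_\varphi f|$ against a Schwartz function on $\rr {2d}$.

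Next I would split $|V_\varphi f|$ as $g_1 + g_2$, where $g_1 = |V_\varphi f| \cdot \mathbf 1_{\{|\eta| > B|x|\}}$ and $g_2 = |V_\varphi f| \cdot \mathbf 1_{\{|\eta| \leq B|x|\}}$, with $B$ the constant from $f \in V_{\rm con}^\varphi(\rr d)$. The function $g_1$ is, after extending it by $C^\infty$ off the closed cone is not literally smooth, but this is handled exactly as in Lemma \ref{convolutiondecay1}: $g_1$ is dominated by a function $g$ that is smooth on $\rr {2d}$, polynomially bounded (using $V_\varphi f \in \mathscr S'$, hence polynomially bounded, to control $g$ on the region $|\eta| \leq B|x|$ where we only need a crude bound), and rapidly decaying in the open cone $\{|\eta| > B'|x|\}$ for $B' > B$. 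Applying Lemma \ref{convolutiondecay1} (with the roles of $t,\xi$ there played by $x,\eta$), the convolution $g \ast |V_\psi \gamma|$ decays rapidly in any slightly narrower cone $\{|\eta| > B''|x|\}$, $B'' > B'$.

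The remaining piece is $g_2 \ast |V_\psi \gamma|$, which must also be shown to decay rapidly in a cone around the frequency axis. Here I would argue as follows: $g_2$ is supported in $\{|\eta| \leq B|x|\}$, which is a cone \emph{around the time axis}, disjoint from the frequency axis; and $g_2$ is polynomially bounded (since $V_\varphi f$ is). On the one hand $g_2$ is trivially rapidly decaying inside the open frequency-cone $\{|\eta| > B|x|\}$ — it is identically zero there — so by Remark \ref{permutvar1} and Lemma \ref{convolutiondecay1} applied with cones around the time axis... actually it is cleaner to estimate $g_2 \ast |V_\psi \gamma|$ directly: for $(x,\eta)$ with $|\eta| > B''|x|$ large, and $(y,\zeta) \in \supp g_2$ so that $|\zeta| \leq B|y|$, the difference $(x-y,\eta-\zeta)$ must be large — indeed if $(y,\zeta)$ is comparable to $(x,\eta)$ then $(y,\zeta)$ would have to lie in both cones, impossible for the separated cones, forcing $|(x-y,\eta-\zeta)| \geq c\,(|(x,\eta)| + |(y,\zeta)|)$; then the Schwartz decay of $V_\psi \gamma$ absorbs the polynomial growth of $g_2$ and yields rapid decay of the convolution. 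This geometric separation-of-cones estimate is the main obstacle, though it is entirely analogous to the argument already carried out in the proof of Lemma \ref{convolutiondecay1}. Combining the two pieces shows $\sup_{|\eta| > B''|x|} \eabs{(x,\eta)}^n |V_\psi f(x,\eta)| < \infty$ for all $n$, i.e. $f \in V_{\rm con}^\psi(\rr d)$.
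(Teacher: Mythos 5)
Your main idea coincides with the paper's: the change-of-window convolution inequality
$$
|V_\psi f| \leq C\, |V_\varphi f| * |V_\psi \gamma|,
$$
which with $\gamma=\varphi$ is exactly the estimate from \cite[Lemma 11.3.3]{Grochenig1} used in the paper, followed by an application of Lemma~\ref{convolutiondecay1}. The difference is that you split $|V_\varphi f|=g_1+g_2$ by cone and handle the two pieces separately, whereas the paper applies Lemma~\ref{convolutiondecay1} \emph{once} to the full function. That lemma is designed for precisely the object at hand: a function that is smooth on $\rr{2d}$, globally of polynomial growth, and rapidly decaying only inside an open cone around the frequency axis. Since $V_\varphi f\in C^\infty(\rr{2d})$, $|V_\varphi f|\leq C\eabs{\cdot}^N$ by \cite[Theorem 11.2.3]{Grochenig1}, the cone decay is the hypothesis $f\in V_{\rm con}^\varphi$, and $V_\psi\gamma\in\mathscr S(\rr{2d})$, the lemma applies directly; its proof in fact bounds $|g|*|\varphi|$, so the conclusion gives rapid decay of $|V_\varphi f|*|V_\psi\gamma|$ in a slightly narrower cone and the argument ends there. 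Your extra step --- estimating $g_2*|V_\psi\gamma|$ by a cone-separation argument against the Schwartz decay of the kernel --- is essentially a re-derivation of the $I_2$ estimate \eqref{intuppsk2a} already inside the proof of Lemma~\ref{convolutiondecay1} (the far region $\eabs{(x,\eta)}>\eabs{(t,\xi)}^{1/2}$, where Schwartz decay of the kernel absorbs the polynomial growth of $g$). There is also a small internal muddle: you say $g_1$ is dominated by a smooth, globally polynomially bounded function $g$ that decays rapidly in the cone; but the natural such $g$ is $|V_\varphi f|$ itself, which dominates $g_1+g_2$ simultaneously, and then the second piece of your decomposition disappears from the argument. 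The use of an auxiliary third window $\gamma$ rather than $\gamma=\varphi$ is a valid generalization of the convolution bound but buys nothing here. In short, the proof is correct but the cone decomposition is redundant and duplicates the content of the lemma you are already invoking.
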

\begin{proof}
Let $f \in V_{\rm con}^{\varphi} (\rr d)$.
According to \cite[Lemma 11.3.3]{Grochenig1} we have
\begin{equation}\nonumber
|V_\psi f(t,\xi)| \leq C |V_\varphi f| * |V_\psi \varphi| (t,\xi), \quad t,\xi \in \rr d.
\end{equation}
Furthermore, we have $V_\psi \varphi \in \mathscr S(\rr {2d})$, $V_\varphi f \in C^\infty(\rr {2d})$ and by \cite[Theorem 11.2.3]{Grochenig1} there exist $C,N>0$ such that
$$
|V_\varphi f(t,\xi)| \leq C \eabs{(t,\xi)}^N, \quad t,\xi \in \rr d.
$$
The result now follows from Lemma \ref{convolutiondecay1}.
\end{proof}

As a consequence of the latter lemma we may denote $V_{\rm con}^{\varphi} (\rr d)=V_{\rm con} (\rr d)$, which is understood to be defined by an arbitrary $\varphi \in \mathscr S(\rr d) \setminus 0$.
Another consequence is that if \eqref{conedef1} holds, that is $V_\varphi f(x,\eta)$ decays rapidly in a conical neighborhood of the frequency axis $\{ (x,\eta) \in \rr {2d}: \ |\eta|>B|x| \}$, then rapid decay holds for a neighborhood of the form $\{ (x,\eta) \in \rr {2d}: |\eta|>B'|x-y| \}$, $B'>B$, for any fixed $y \in \rr d$. In fact we have $V_{T_{-y} \varphi} f(x,\eta) = V_\varphi f(x-y,\eta)$ which gives
\begin{equation}\nonumber
\begin{aligned}
& \sup_{(x,\eta): |\eta| > B'|x-y|} \eabs{(x,\eta)}^n |V_\varphi f(x,\eta)| \\
& = \sup_{|\eta| > B'|x-y|} \eabs{(x-y,\eta) + (y,0)}^n |V_{T_y \varphi} f(x-y,\eta)| \\
& \leq C \eabs{y}^n \sup_{|\eta| > B'|x-y|} \eabs{(x-y,\eta)}^n |V_{T_y \varphi} f(x-y,\eta)|
< \infty \quad \forall n>0
\end{aligned}
\end{equation}
by the proof of Lemma \ref{coneindependenttest1} and Lemma \ref{convolutiondecay1}.

\begin{prop}\label{smoothslowinclusion1}
We have the inclusions
\begin{align}
& C_{\rm slow}^\infty (\rr d) \subseteq WFW^{\perp}(\rr d), \label{inclusion1} \\
& C_{\rm slow}^\infty (\rr d) \subseteq V_{\rm con} (\rr d) \subseteq WFW^{\neq}(\rr d). \label{inclusion2}
\end{align}
\end{prop}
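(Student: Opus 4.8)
The plan is to prove the three inclusions in turn, the common engine being a decay estimate for the short-time Fourier transform (STFT) together with Lemma~\ref{wignerfourier1} and Lemma~\ref{convolutiondecay1}. Throughout I would fix the window $\varphi$ to be a (normalized) Gaussian, so that $W_\varphi(t,\xi) = 2^{d} e^{-2\pi(|t|^2+|\xi|^2)} > 0$ everywhere on $\rr {2d}$ by Hudson's theorem. The point of this choice is that then $\psi \mapsto \psi/W_\varphi$ maps $C_c^\infty(\rr {2d})$ into itself, preserving supports and the set of points where a function is nonzero.

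First I would establish the elementary bound
\begin{equation}\nonumber
|V_\varphi f(x,\eta)| \le C_k \eabs{x}^N \eabs{\eta}^{-k}, \qquad x,\eta \in \rr d,\ k \ge 0,
\end{equation}
for $f \in C_{\rm slow}^\infty(\rr d)$, with $N$ as in \eqref{conditionf}. This follows by writing $V_\varphi f(x,\eta) = \int f(y)\overline{\varphi(y-x)} e^{-2\pi i y\cdot\eta}\,dy$, integrating by parts $k$ times in the coordinate $y_j$ with $|\eta_j| \ge |\eta|/\sqrt d$, expanding by Leibniz, and using $|\partial^\alpha f(y)| \le C_\alpha \eabs{y}^N$, $|\partial^\beta\varphi(y-x)| \le C_{\beta,M}\eabs{y-x}^{-M}$, together with $\eabs{y}^N \le C\eabs{y-x}^N\eabs{x}^N$; the polynomial bound $|V_\varphi f|\le C\eabs{(x,\eta)}^{N'}$ of \cite[Theorem 11.2.3]{Grochenig1} disposes of the range $|\eta|\le1$. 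Inserting this estimate into \eqref{conedef1} and noting that on the cone $\{|\eta|>B|x|\}$ one has $\eabs{(x,\eta)}\le c_B\eabs{\eta}$ and $\eabs{x}\le c_B\eabs{\eta}$, the factor $\eabs{\eta}^{-k}$ with $k$ large beats $\eabs{(x,\eta)}^n\eabs{x}^N$. Hence $C_{\rm slow}^\infty(\rr d) \subseteq V_{\rm con}^\varphi(\rr d) = V_{\rm con}(\rr d)$ (for any $B>0$), which is the first inclusion in \eqref{inclusion2}.

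For the two inclusions into $WFW^\perp$ and $WFW^{\neq}$ I would argue through cutoffs. Given any $\psi \in C_c^\infty(\rr {2d})$, set $h := \psi/W_\varphi \in C_c^\infty(\rr {2d})$; then $\psi W_f = h\cdot(W_f W_\varphi)$, so $\wh{\psi W_f} = \wh h * \wh{W_f W_\varphi}$, where by Lemma~\ref{wignerfourier1}
\begin{equation}\nonumber
\wh{W_f W_\varphi}(\eta,x) = V_\varphi f(-x/2,\eta/2)\,\overline{V_\varphi f(x/2,-\eta/2)},
\end{equation}
a $C^\infty$ function of polynomial growth (again \cite[Theorem 11.2.3]{Grochenig1}). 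If $f \in V_{\rm con}$ with cone constant $B$, then for $(\eta,x)$ in $\Gamma = \{|\eta|>B|x|\}$ both arguments $(\mp x/2,\pm\eta/2)$ lie in $\{|\zeta|>B|y|\}$, so $\wh{W_f W_\varphi}$ decays rapidly on $\Gamma$; Lemma~\ref{convolutiondecay1} in the permuted form of Remark~\ref{permutvar1} then yields rapid decay of $\wh{\psi W_f} = \wh h * \wh{W_f W_\varphi}$ on $\Gamma' = \{|\eta|>B'|x|\}$ for any $B'>B$. Since $\Gamma'$ is an open cone containing every direction $(\eta_0,0)$ with $\eta_0\ne0$, and $\psi$ can be chosen nonzero at an arbitrary base point $(t_0,\xi_0)$, no point $(t_0,\xi_0;\eta_0,0)$ with $\eta_0\ne0$ lies in $WF(W_f)$; this is exactly $f\in WFW^{\neq}(\rr d)$, giving the second inclusion in \eqref{inclusion2}. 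For \eqref{inclusion1} I would run the same argument for $f\in C_{\rm slow}^\infty$: the Step~1 estimate gives $|\wh{W_f W_\varphi}(\eta,x)| \le C_k\eabs{x}^{2N}\eabs{\eta}^{-2k}$, hence rapid decay of $\wh{W_f W_\varphi}$ on $\{|\eta|>\delta|x|\}$ for \emph{every} $\delta>0$, and therefore (Lemma~\ref{convolutiondecay1}, Remark~\ref{permutvar1}, applied with cone constant $\delta/2$) rapid decay of $\wh{\psi W_f}$ on $\{|\eta|>\delta'|x|\}$ for every $\delta'>0$; every direction $(\eta_0,x_0)$ with $\eta_0\ne0$ lies in such a cone, so $WF(W_f)\subseteq\rr {2d}\times(0\times\rr d\setminus0)$, i.e. $f\in WFW^{\perp}(\rr d)$.

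The main obstacle I anticipate is the bookkeeping of the cones: one must check that the conic regions on which $\wh{W_f W_\varphi}$ decays are compatible with the unavoidably larger cones produced by convolution with $\wh h$ in Lemma~\ref{convolutiondecay1}, and that the enlarged cone still contains the directions that need to be excluded from $WF(W_f)$ — all directions off the time axis for \eqref{inclusion1}, and all pure time directions for the $WFW^{\neq}$ inclusion (here a single cone $\{|\eta|>B'|x|\}$ suffices, which is why the weaker hypothesis $f\in V_{\rm con}$ is enough). The remaining steps are routine: the integration-by-parts estimate for $V_\varphi f$, the identity $\psi W_f = (\psi/W_\varphi)\cdot(W_f W_\varphi)$ with $\wh{\psi W_f} = \wh{(\psi/W_\varphi)}*\wh{W_f W_\varphi}$, and Peetre-type inequalities for $\eabs{\cdot}$.
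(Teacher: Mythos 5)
Your proposal is correct and follows essentially the same route as the paper: establish rapid decay of $V_\varphi f$ on cones $\{|\eta|>B|x|\}$ by integration by parts, transfer this to $\wh{W_f W_\varphi}$ via Lemma~\ref{wignerfourier1}, and propagate it through the convolution with a Schwartz function using Lemma~\ref{convolutiondecay1} and Remark~\ref{permutvar1}. The one cosmetic difference is that you fix a Gaussian window so that $W_\varphi>0$ everywhere and handle an arbitrary cutoff $\psi$ via the clean trick $h=\psi/W_\varphi$, whereas the paper chooses the test function to be of the form $\chi W_\varphi$ with $(\chi W_\varphi)(t,\xi)\neq0$ — both are valid and the underlying estimates are identical.
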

\begin{proof}
Let $f \in C_{\rm slow}^\infty (\rr d)$ and $\varphi \in \mathscr S(\rr d)$. For $\alpha \in \nn d$ integration by parts and \eqref{conditionf} give for some $N>0$
\begin{equation}\label{decaycone2}
\begin{aligned}
| \eta^\alpha V_\varphi f(x,\eta)| & = \left| C_\alpha \int_{\rr d} f(t) \overline{\varphi(t-x)} \pdd t \alpha (e^{-2 \pi i t \cdot \eta}) dt \right| \\
& = \left| \sum_{\beta \leq \alpha} C_\beta \int_{\rr d} \partial^{\alpha-\beta } f(t) \overline{\pd \beta \varphi(t-x)} e^{-2 \pi i t \cdot \eta} dt \right| \\
& \leq \sum_{\beta \leq \alpha} C_\beta \int_{\rr d} \eabs{t}^N \eabs{t-x}^{-N-d-1} dt \\
& \leq \sum_{\beta \leq \alpha} C_\beta \int_{\rr d} \eabs{t}^N \eabs{t}^{-N-d-1} \eabs{x}^{N+d+1} dt \\
& \leq C_\alpha \eabs{x}^{N+d+1}.
\end{aligned}
\end{equation}
Let $B>0$ be arbitrary and define the cone $\Gamma=\{(x,\eta) \in \rr {2d}: \ |\eta|>B|x| \}$.
Then if $(x,\eta) \in \Gamma$, we have by \eqref{decaycone2} for any $n>0$
\begin{equation}\nonumber
\eabs{\eta}^n | V_\varphi f(x,\eta)| \leq C_n \eabs{x}^{N+d+1} \leq C_n \eabs{\eta}^{N+d+1},
\end{equation}
which leads to
\begin{equation}\nonumber
\sup_{ (x,\eta) \in \Gamma}\eabs{(x,\eta)}^n | V_\varphi f(x,\eta)| \leq C \sup_{ (x,\eta) \in \Gamma} \eabs{\eta}^n | V_\varphi f(x,\eta)| < \infty
\end{equation}
for any $n>0$. Thus $V_\varphi f(x,\eta)$ decays rapidly in a cone $|\eta|>B|x|$ for any $B>0$. This proves the first inclusion in \eqref{inclusion2}.

By Lemma \ref{wignerfourier1} we have
\begin{equation}\label{rapiddecaycone1}
\sup_{|\eta|>B|x|} \eabs{(x,\eta)}^n | \wh {W_f W_\varphi} (\eta,x) | < \infty \quad \forall n>0
\end{equation}
for any $B>0$.
Let $\chi \in C_c^\infty(\rr {2d})$ and $\varphi \in \mathscr S(\rr d)$ satisfy $(\chi W_\varphi) (t,\xi) \neq 0$ for arbitrary fixed $(t,\xi) \in \rr {2d}$. We have $\chi W_\varphi \in C_c^\infty(\rr {2d})$, and
\begin{equation}\label{convolution1}
\mathscr F (W_f W_\varphi \chi) (\eta,x) = \wh {W_f W_\varphi} * \wh \chi (\eta,x).
\end{equation}
We have $\wh {W_f W_\varphi} = \wh W_f * \wh W_\varphi \in C^\infty(\rr {2d})$ and $\wh {W_f W_\varphi}$ has polynomial growth by \cite[Theorem 7.19]{Rudin2} since $\wh W_f \in \mathscr S'(\rr {2d})$ and $\wh W_\varphi \in \mathscr S(\rr {2d})$. This means that there exist $C,N>0$ such that
$$
| \wh {W_f W_\varphi} (\eta,x)| \leq C \eabs{(\eta,x)}^N, \quad \eta,x \in \rr d.
$$
Now \eqref{rapiddecaycone1}, \eqref{convolution1}, Lemma \ref{convolutiondecay1} and Remark \ref{permutvar1} says that for any $B'>B$ and corresponding cone $\Gamma'=\{ |\eta| > B' |x| \} \subset \Gamma$, it holds that $\mathscr F (W_f W_\varphi \chi) (\eta,x)$ decays rapidly in the cone $\Gamma'$. Hence $\mathscr F (W_f W_\varphi \chi) (\eta,x)$ decays rapidly in a cone $|\eta|>B|x|$ for any $B>0$. Therefore $f \in WFW^\perp(\rr d)$. The inclusion \eqref{inclusion1} is therefore proved.

It remains to prove the second inclusion in \eqref{inclusion2}. Let $f \in V_{\rm con} (\rr d)$. This assumption and Lemma \ref{wignerfourier1} imply that $\wh {W_f W_\varphi}(\eta,x)$ decays rapidly in a conic neighborhood $|\eta|>B|x|$ for some $B>0$ for any $\varphi \in \mathscr S(\rr d)$. Hence by Lemma \ref{convolutiondecay1}, $\mathscr F(W_f W_\varphi \chi) (\eta,x) = \wh {W_f W_\varphi} * \wh \chi (\eta,x)$ decays rapidly in a conical neighborhood of the form $|\eta|>B'|x|$ where $B'>0$, for any $\chi \in C_c^\infty(\rr {2d})$. This means that $(t,\xi;\eta,0) \notin WF(W_f)$ for any $t,\xi \in \rr d$ and $\eta \in \rr d\setminus 0$, that is $f \in WFW^{\neq}(\rr d)$.
\end{proof}

\begin{rem}
A combination of Lemma \ref{fouriercompact1} and Proposition \ref{smoothslowinclusion1}, \eqref{inclusion1}, gives $\mathscr {F E}'(\rr d) \subseteq WFW^{\perp}(\rr d)$. This admits the interpretation that the very high (analytic) regularity of $f \in \mathscr {F E}'(\rr d)$ is reflected in the fact that $W_f$ inherits smoothness in the time direction. In fact the wave front set of $W_f$ is directed purely in the frequency direction and has no component in the time direction.
\end{rem}

\begin{rem}
We note that an alternative proof of the inclusion
$$
V_{\rm con} (\rr d) \subseteq WFW^{\neq}(\rr d)
$$
can be deduced from the results by Toft \cite{Toft1,Toft2}. In fact, \cite[Proposition 1.8 and Theorem 4.1]{Toft2}
imply the following result.
\begin{equation}\nonumber
\begin{aligned}
& \mbox{If $f \in \mathscr S'(\rr d)$ and $(0,0; \eta, x) \notin WF(A_f)$} \\
& \mbox{then $(t,\xi; \eta, 2 \pi x) \notin WF(W_f)$ for all $(t,\xi) \in \rr {2d}$}.
\end{aligned}
\end{equation}
(Note the factor $2 \pi$ that appears in front of $x$, due to different normalizations of the Wigner distribution.) Here $A_f$ is the so called \emph{ambiguity function} of $f$ \cite{Folland1,Grochenig1}, normalized for $f \in \mathscr S(\rr d)$ as
$$
A_f(\eta,x) = \left( \frac{2}{\pi} \right)^{d/2} \int_{\rr d} f(t-\eta) \overline{f(t + \eta)} e^{2 i t \cdot x} dt.
$$
One can show with computations resembling those of the proof of Lemma \ref{wignerfourier1} that
\begin{equation}\nonumber
\wh {A_f A_\varphi} (\eta,x) = |V_{\check \varphi} f(\pi x,-\pi \eta)|^2, \quad \eta,x \in \rr d,
\end{equation}
for $f \in \mathscr S'(\rr d)$, $\varphi \in \mathscr S(\rr d)$. Here $\check \varphi(t)=\varphi(-t)$. Using  Lemma \ref{convolutiondecay1} as in the proof of Proposition \ref{smoothslowinclusion1} with $(\chi A_\varphi)(0,0) \neq 0$
we obtain $(0,0; \eta,0) \notin WF(A_f)$ for $\eta \in \rr d \setminus 0$ provided $f \in V_{\rm con}(\rr d)$. An application of Toft's results finally gives $V_{\rm con} (\rr d) \subseteq WFW^{\neq}(\rr d)$.
\end{rem}

We may summarize the inclusions we have found as follows.
\begin{equation}\nonumber
\begin{aligned}
\mathscr F \mathscr E'(\rr d) \subseteq & \ C_{\rm slow}^\infty(\rr d) \subseteq V_{\rm con} (\rr d) \subseteq WFW^{\neq}(\rr d), \\
& \ C_{\rm slow}^\infty(\rr d) \subseteq WFW^{\perp}(\rr d) \subseteq WFW^{\neq}(\rr d).
\end{aligned}
\end{equation}

The next example shows that $V_{\rm con} (\rr d) \nsubseteq WFW^{\perp}(\rr d)$ and $C_{\rm slow}^\infty (\rr d) \subsetneq V_{\rm con} (\rr d)$.

\begin{example}
Let $\eta_0 \in \ro \setminus 0$ and set $f(t) = \exp(2 \pi i \eta_0 t^2)$. Then by Example \ref{example1} we have $WF(W_f) = \{ (t, 2 \eta_0 t; -2 \eta_0 x,x), \ t \in \rr d,\ x \in \rr d \setminus 0 \}$, so $f \notin WFW^\perp(\rr d)$. If we choose $\varphi(x)=\exp(- 2 \pi x^2)$ then it can be verified that
\begin{equation}\nonumber
| V_\varphi f(x,\eta)| = C_{\eta_0} \exp \left( - \frac{2 \pi}{1+\eta_0^2} \left( \eta_0 x - \eta/2 \right)^2 \right), \quad C_{\eta_0} > 0.
\end{equation}
If $B>0$, $|\eta| >B |x|$ and $0 < \ep < 1/2$ we have
\begin{equation}\nonumber
\left| \eta_0 x - \frac{\eta}{2} \right| \geq \frac{|\eta|}{2} - | \eta_0| |x| > |\eta|\left(\frac1{2} - \frac{| \eta_0|}{B} \right) \geq \ep |\eta|
\end{equation}
provided $B$ is sufficiently large. Thus
\begin{equation}\nonumber
\begin{aligned}
\sup_{|\eta|>B |x|} \eabs{(x,\eta)}^n | V_\varphi f(x,\eta)|
& \leq C \sup_{|\eta|>B |x|} \eabs{\eta}^n | V_\varphi f(x,\eta)| \\
& \leq C \sup_{|\eta|>B |x|} \eabs{\eta}^n \exp \left( - \frac{2 \pi \ep^2}{1+\eta_0^2} |\eta|^2 \right) < \infty,
\end{aligned}
\end{equation}
which means that $f \in V_{\rm con} (\rr d)$. Hence we have shown $V_{\rm con} (\rr d) \nsubseteq WFW^{\perp}(\rr d)$.
For the function $f$ we also have $f \notin C_{\rm slow}^\infty (\rr d)$ which shows that
$C_{\rm slow}^\infty (\rr d) \subsetneq V_{\rm con} (\rr d)$. In fact, we have
$$
|\pd \alpha f(t)| = |p_\alpha(t)|
$$
where $p_\alpha$ is a polynomial of order $|\alpha|$. If we suppose that $f \in C_{\rm slow}^\infty (\rr d)$ then \eqref{conditionf} gives for some $N>0$
$$
\sup_{t \in \rr d} |\pd \alpha f(t)| \eabs{t}^{-N} = \sup_{t \in \rr d}|p_\alpha (t)|\eabs{t}^{-N} < \infty
$$
for all $\alpha \in \nn d$, which is a contradiction. Thus $f \notin C_{\rm slow}^\infty (\rr d)$.
We note furthermore that a cone $|\eta| > B |x|$ where $V_\varphi f(x,\eta)$ decays rapidly has $B \geq 2 |\eta_0|$.
Therefore it is not always the case that the cone of decay for elements in $V_{\rm con} (\rr d)$ can be arbitrarily large, that is, $B$ arbitrarily small.
\end{example}

Let $A \in \rr {d \times d}$ be symmetric. The transformation $T_A$ defined by
$$
T_A f(t) = \exp(\pi i t \cdot A t) f(t)
$$
is continuous on $\mathscr S(\rr d)$ and extends to continuous transformation on $\mathscr S'(\rr d)$. The Wigner distribution is transformed according to
\begin{equation}\label{chirpwigner1}
W_{T_A f} (t,\xi) = W_f(t,\xi-A t).
\end{equation}
The next result treats the invariance and noninvariance under $T_A$ of four of the spaces introduced above. However, for $V_{\rm con} (\rr d)$ we cannot prove or disprove invariance.

\begin{prop}
Let $A \in \rr {d \times d}$ be symmetric.

\rm{(i)} $\mathscr {F E'}(\rr d)$, $C_{\rm slow}^\infty(\rr d)$ and $WTW^\perp(\rr d)$ are not invariant under $T_A$.

\rm{(ii)} $WTW^{\neq}(\rr d)$ is invariant under $T_A$.
\end{prop}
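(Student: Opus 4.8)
The plan is to prove the two parts separately, using the single test distribution $f\equiv 1$ for part (i) and the transformation rule \eqref{chirpwigner1} together with the behaviour of the wave front set under a linear change of variables for part (ii). In part (i) we may assume $A\neq 0$, the case $A=0$ giving $T_0=\mathrm{id}$, for which everything is trivially invariant.

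For part (i) I would start from the fact that $f\equiv 1=\mathscr F\delta_0$ lies in $\mathscr F\mathscr E'(\rr d)$, hence in $C_{\rm slow}^\infty(\rr d)$ by Lemma~\ref{fouriercompact1}, and in $WFW^\perp(\rr d)$ since $W_1=1\otimes\delta_0$ has $WF(W_1)=(\rr d\times 0)\times(0\times\rr d\setminus 0)$ by Example~\ref{example1} with $A=0$. On the other hand $T_A1(t)=\exp(\pi i\,t\cdot At)$ is exactly the chirp of Example~\ref{example1}, so $WF(W_{T_A1})=\{(t,At;-Ax,x):\ t\in\rr d,\ x\in\rr d\setminus 0\}$; since $A\neq 0$ we may choose $x_0$ with $Ax_0\neq 0$, and then the covector $(-Ax_0,x_0)$ has nonvanishing $t$-component, so $W_{T_A1}$ violates the defining condition of $WFW^\perp(\rr d)$. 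To see that $T_A1\notin C_{\rm slow}^\infty(\rr d)$, fix $v$ with $Av\neq 0$ and observe that $(v\cdot\nabla)^m\exp(\pi i\,t\cdot At)=q_m(t)\exp(\pi i\,t\cdot At)$, where $q_m$ is a polynomial of degree exactly $m$ with leading term $(2\pi i\,(Av)\cdot t)^m$; as $(v\cdot\nabla)^m$ is a fixed linear combination of the $\partial^\alpha$ with $|\alpha|=m$ and $|\exp(\pi i\,t\cdot At)|\equiv 1$, condition \eqref{conditionf} would force $|q_m(t)|\le C_m\eabs{t}^N$ with $N$ independent of $m$, which is impossible. Hence $T_A1\notin C_{\rm slow}^\infty(\rr d)$, and a fortiori (Lemma~\ref{fouriercompact1}) $T_A1\notin\mathscr F\mathscr E'(\rr d)$. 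This settles (i).

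For part (ii) the plan is as follows. By \eqref{chirpwigner1} we have $W_{T_Af}=W_f\circ\Phi_A$, where $\Phi_A(t,\xi)=(t,\xi-At)$ is a linear automorphism of $\rr{2d}$ with inverse $\Phi_{-A}$ and matrix $\left(\begin{smallmatrix}I&0\\-A&I\end{smallmatrix}\right)$. The transformation law for the wave front set under a diffeomorphism (\cite[Theorem 8.2.4]{Hormander1}, applied to both $\Phi_A$ and $\Phi_{-A}$ to obtain an equality) then gives
\begin{equation}\nonumber
WF(W_{T_Af})=\{\,(t,\xi;\,\eta-Ax,\,x)\ :\ (t,\xi-At;\,\eta,\,x)\in WF(W_f)\,\},
\end{equation}
since the transpose of $\left(\begin{smallmatrix}I&0\\-A&I\end{smallmatrix}\right)$ carries $(\eta,x)$ to $(\eta-A^{T}x,x)=(\eta-Ax,x)$ by symmetry of $A$. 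Thus the covector directions occurring in $WF(W_{T_Af})$ are exactly the images of those occurring in $WF(W_f)$ under the linear bijection $(\eta,x)\mapsto(\eta-Ax,x)$, and this bijection maps the "time direction" set $\{(\eta,0):\eta\neq 0\}$ onto itself (for $x=0$ one has $\eta-Ax=\eta$). Consequently $WF(W_{T_Af})$ meets $\rr{2d}\times(\rr d\setminus 0\times 0)$ if and only if $WF(W_f)$ does, i.e.\ $T_Af\in WFW^{\neq}(\rr d)\iff f\in WFW^{\neq}(\rr d)$; this proves (ii) for every symmetric $A$.

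The only delicate point is the bookkeeping in part (ii): one must get the transpose right in the transformation law, so that the shear $\xi\mapsto\xi-At$ on the base variables induces the shear $\eta\mapsto\eta-Ax$ on the cotangent fibre, and then notice that this fibre shear restricts to the identity on $\{x=0\}$ — which is precisely what preserves the property of pointing purely in the time direction. Part (i) needs nothing beyond Example~\ref{example1} and the elementary derivative estimate above.
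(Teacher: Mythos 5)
Your proof is correct and follows essentially the same route as the paper: part (i) is based on $T_A 1=\exp(\pi i\,t\cdot At)$ together with Example~\ref{example1}, and part (ii) is based on \eqref{chirpwigner1} and the transformation law for the wave front set under the linear change of variables $Q=\left(\begin{smallmatrix}I&0\\-A&I\end{smallmatrix}\right)$, observing that the cotangent action $(\eta,x)\mapsto(\eta-Ax,x)$ fixes $\{x=0\}$. The only minor redundancy is your direct derivative estimate showing $T_A1\notin C^\infty_{\rm slow}(\rr d)$: since $1\in\mathscr{FE'}\subseteq C^\infty_{\rm slow}\subseteq WFW^\perp$ and you have already shown $T_A1\notin WFW^\perp$, non-membership in the two smaller spaces, and hence their non-invariance, follows immediately from these inclusions, which is how the paper argues.
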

\begin{proof}
(i) Since $1 \in \mathscr {F E'}(\rr d) \subseteq C_{\rm slow}^\infty(\rr d) \subseteq WTW^\perp(\rr d)$ by Lemma \ref{fouriercompact1} and Proposition \ref{smoothslowinclusion1}, it suffices to show that $g \notin WTW^\perp(\rr d)$ where $g(t) = \exp(\pi i t \cdot A t)$. According to Example \ref{example1} we have
$$
WF(W_g) = \{ (t,At;-Ax,x): \ t \in \rr d, \ x \in \rr d \setminus 0 \}.
$$
Picking $x \in \rr d$ such that $Ax \neq 0$ reveals that $g \notin WTW^\perp(\rr d)$.

(ii) According to \eqref{chirpwigner1} we have $W_{T_A f} = W_f \circ Q$ for $f \in \mathscr S'(\rr d)$ where $Q$ denotes the invertible matrix
\begin{equation}\nonumber
Q =
\left(
  \begin{array}{cc}
    I & 0 \\
    -A & I
  \end{array}
\right) \in \rr {2d \times 2d}.
\end{equation}
According to \cite[Theorem 8.2.4]{Hormander1} we have
\begin{equation}\nonumber
\begin{aligned}
& WF( W_f \circ Q ) \\
& = \{ (t,\xi; Q^t(\eta,x) ): \ ( Q(t,\xi);\eta,x ) \in WF(W_f), \ t,\xi, \eta, x \in \rr d \} \\
& = \{ (t,\xi; \eta-Ax,x ): \ ( t,\xi-At;\eta,x ) \in WF(W_f), \ t,\xi, \eta, x \in \rr d \}.
\end{aligned}
\end{equation}
If $f \in WFW^{\neq} (\rr d)$ and $(t,\xi; \eta,x) \in WF(W_f)$ then $x \neq 0$, which implies that
$$
WF(W_{T_A f}) \bigcap \left(\rr {2d} \times (\rr d \setminus 0 \times 0)\right) = \emptyset.
$$
Hence $T_A f \in WFW^{\neq} (\rr d)$.
\end{proof}

\subsection{An elaboration of the inclusion \eqref{inclusion1}}

In this subsection we prove a result related to the inclusion $C_{\rm slow}^\infty (\rr d) \subseteq WFW^{\perp}(\rr d)$. We will prove a stronger statement under a stronger hypothesis. The conclusion again includes the fact that the wave front set is directed purely in the frequency direction. On top of that we add the statement that the location of the singular support is included in $\rr d \times 0$.

We begin by recalling the definition of two classes of symbols: the
H\"ormander classes $S^m_{\rho,\delta}(X\times\rr N)$ \cite{Hormander1} and the Shubin classes $S^m_\rho(\rr d)$ \cite{Shubin1}.

\begin{defn}
Let $X\subseteq \rr n_z$ be open, $m\in \ro$, $\rho\in(0,1]$,
and $\delta\in[0,1)$. Then $S^m_{\rho,\delta}(X\times \rr d_\tau)$ is
the subspace of $a \in C^\infty(X\times \rr d_\tau)$ such
that for every $\alpha \in \nn d$, $\beta \in \nn n$ and every compact
$K\subset X$ there exists a constant $C_{m,\alpha,\beta,K}>0$ so
that
\begin{equation*}
|\partial^\beta_z\partial^\alpha_\tau a(z,\tau)|\le
C_{m,\alpha,\beta,K}\langle \tau
\rangle^{m-\rho|\alpha|+\delta|\beta|}
\end{equation*}
is satisfied for $z\in K, \tau \in \rr d$.
\end{defn}

\begin{defn}\label{shubinclasses1}
For $m\in \ro$, $0 < \rho \leq 1$, $\Gamma^m_\rho(\rr d)$ is the subspace of
$f \in C^\infty(\rr d)$ such that for every
$\gamma \in \nn d$ there exists a constant $C_{m,\gamma}>0$ so that
\begin{equation*}
|\partial^\gamma_t f(t)|\le C_{m,\gamma}\langle t
\rangle^{m-\rho|\gamma|}
\end{equation*}
is satisfied for every $t\in \rr d$.
\end{defn}

\begin{rem}$S^m_{\rho,\delta}(X\times \rr d)$ and $\Gamma^m_{\rho}(\rr
d)$ are Fr{\' e}chet spaces with respect to the best constants appearing
in the estimates. If we extend Definition \ref{shubinclasses1} to $\rho=0$ we note that
$$
\Gamma_\rho^m(\rr d) \subseteq C_{\rm slow}^\infty(\rr d) = \bigcup_{m >0 } \Gamma_0^m(\rr d), \quad 0 < \rho \leq 1.
$$
\end{rem}

We set $\Gamma=\rr n_z\times (\rr d_\tau \setminus 0)$. A {\it
phase function} $\phi$ on $\Gamma$ is a real-valued smooth function that satisfies
the conditions:

(i) $\phi(z,\lambda\tau)=\lambda\phi(z,\tau)$ for $(z,\tau)\in
\Gamma$, $\lambda>0$.

(ii) $\nabla \phi(z,\tau)\ne 0$ for every $(z,\tau)\in \Gamma$.
\\
We recall the meaning of {\it oscillatory integrals} of the type
$$
I_\phi^a=\int_{\rr n}e^{i\phi(\cdot,\tau)}a(\cdot,\tau)\, d\tau
$$
where $a\in S^m_{\rho,\delta}(\rr {n+d})$ and $\phi$ is a phase
function on $\Gamma$ (cf. \cite[Theorem 7.8.2]{Hormander1}).
\begin{prop}
{\rm (i)} For fixed $u\in C^\infty_c(\rr n)$, and fixed phase function
$\phi$, the map defined by the absolutely convergent integral
\begin{equation}
C^\infty_c(\rr {n+d}) \ni a \longrightarrow I_{\phi}^a(u)=\int_{\rr
{n+d}}e^{i\phi(z,\tau)}a(z,\tau)u(z)\, dz\, d\tau
\end{equation}
has a unique extension to a continuous functional on
$S^m_{\rho,\delta}(\rr {n+d})$ for $m\in \ro$, $\rho\in(0,1]$,
$\delta\in[0,1)$. Hence $I_{\phi}^a(u)$ is well defined for every
$u\in C^\infty_c(\rr n)$ and $a\in S^m_{\rho,\delta}(\rr {n+d})$.

{\rm (ii)} For fixed $a\in S^m_{\rho,\delta}(\rr {n+d})$ the map
(well-defined from (i)):
\begin{equation}
C^\infty_c(\rr n) \ni u \longrightarrow I_{\phi}^a(u)
\end{equation}
is a distribution in ${\mathscr D'}(\rr n)$ of finite order, which
is indicated by
$$
I_\phi^a(\cdot)=\int_{\rr d}e^{i\phi(\cdot,\tau)}a(\cdot,\tau)\, d\tau$$ and
called an oscillatory integral of symbol $a$ and phase
$\phi$.
\end{prop}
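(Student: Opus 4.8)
The plan is to carry out the standard regularization-and-integration-by-parts construction of oscillatory integrals (in the spirit of \cite[Theorem 7.8.2]{Hormander1}): one produces an explicit formula for $I_\phi^a(u)$ whose integrand is absolutely integrable for \emph{every} $a\in S^m_{\rho,\delta}(\rr{n+d})$, and which reduces to the naive integral when $a$ has compact support in $\tau$. From the phase function axioms we extract: homogeneity of degree one in $\tau$ implies $\nabla_z\phi$ is homogeneous of degree $1$ and $\nabla_\tau\phi$ of degree $0$ in $\tau$; and $\nabla\phi\neq 0$ on $\Gamma$ implies, for each compact $K\subseteq\rr n_z$, the lower bound
$$
\Psi_0(z,\tau):=|\nabla_z\phi(z,\tau)|^2+|\tau|^2|\nabla_\tau\phi(z,\tau)|^2\ \geq\ c_K\,|\tau|^2,\qquad z\in K,\ |\tau|\geq 1,
$$
with $c_K>0$, since $\Psi_0$ is homogeneous of degree $2$ in $\tau$ and strictly positive on the compact set $K\times S^{d-1}$.

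First I would fix $\chi\in C_c^\infty(\rr d)$ equal to $1$ near the origin and form the first order differential operator
$$
L\ =\ \chi(\tau)\ +\ \sum_{j=1}^n\frac{(1-\chi(\tau))\,\partial_{z_j}\phi}{i\,\Psi_0}\,\partial_{z_j}\ +\ \sum_{k=1}^d\frac{(1-\chi(\tau))\,|\tau|^2\,\partial_{\tau_k}\phi}{i\,\Psi_0}\,\partial_{\tau_k},
$$
which satisfies $L\,e^{i\phi}=e^{i\phi}$ by a one-line computation, hence $L^N e^{i\phi}=e^{i\phi}$. The cutoff $\chi$ makes the coefficients smooth across $\tau=0$; on each compact set in $z$, homogeneity together with the lower bound for $\Psi_0$ give that the coefficient of $\partial_{z_j}$ is a symbol in $S^{-1}_{1,0}$, the one of $\partial_{\tau_k}$ is in $S^0_{1,0}$, and therefore the zeroth order coefficient of the transpose ${}^tL$ (which is $\chi$ minus $z$- and $\tau$-derivatives of the above coefficients) is in $S^{-1}_{1,0}$. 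Using the Leibniz rule for symbol products and the elementary mapping properties $\partial^\alpha_\tau:S^m_{\rho,\delta}\to S^{m-\rho|\alpha|}_{\rho,\delta}$ and $\partial^\beta_z:S^m_{\rho,\delta}\to S^{m+\delta|\beta|}_{\rho,\delta}$, one then checks the key estimate
$$
{}^tL\ :\ S^m_{\rho,\delta}(\rr{n+d})\ \longrightarrow\ S^{m-\delta_0}_{\rho,\delta}(\rr{n+d}),\qquad \delta_0:=\min(\rho,\,1-\delta)>0,
$$
the three contributions carrying orders $m-1$, $m-(1-\delta)$ and $m-\rho$; this is exactly the step that needs $\rho>0$ and $\delta<1$. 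Iterating, $({}^tL)^N a\in S^{m-N\delta_0}_{\rho,\delta}$ for every $N$.

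Next I would choose $N$ so large that $m-N\delta_0<-d$, hence $\eabs{\tau}^{m-N\delta_0}\in L^1(\rr d)$. For $a\in C_c^\infty(\rr{n+d})$ and $u\in C_c^\infty(\rr n)$, integrating by parts $N$ times (no boundary terms arise) and using $L^N e^{i\phi}=e^{i\phi}$ yields
$$
I_\phi^a(u)\ =\ \int_{\rr{n+d}}e^{i\phi(z,\tau)}\,\bigl({}^tL\bigr)^N\!\bigl(a(z,\tau)\,u(z)\bigr)\,dz\,d\tau .
$$
Since $({}^tL)^N(a\,u)$ has $z$-support contained in $\supp u$ and symbol order $m-N\delta_0<-d$, the right-hand side converges absolutely for \emph{every} $a\in S^m_{\rho,\delta}$ and obeys an estimate of the shape
$$
|I_\phi^a(u)|\ \leq\ C_N\,|\supp u|\,\Bigl(\max_{|\alpha|+|\beta|\leq N}\ \sup_{z\in\supp u,\ \tau\in\rr d}\eabs{\tau}^{-m+\rho|\alpha|-\delta|\beta|}\,|\partial_z^\beta\partial_\tau^\alpha a|\Bigr)\Bigl(\max_{|\gamma|\leq N}\ \sup_{z\in\rr n}|\partial^\gamma u(z)|\Bigr).
$$
The displayed formula therefore defines a functional on $S^m_{\rho,\delta}$, continuous by this estimate, extending the naive integral; independence of the choices of $N$, $\chi$ and $L$ follows by further integrations by parts (all integrals being absolutely convergent) and agreement on $C_c^\infty$. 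Uniqueness of the extension needs a small extra argument because $C_c^\infty(\rr{n+d})$ is not dense in $S^m_{\rho,\delta}$ for its Fréchet topology: one observes that $a_j:=a\,\chi(\tau/j)\in C_c^\infty$ is bounded in $S^m_{\rho,\delta}$ and converges to $a$ in $S^{m+1}_{\rho,\delta}$, that on bounded subsets of $S^m_{\rho,\delta}$ the topologies of $S^m_{\rho,\delta}$ and $S^{m+1}_{\rho,\delta}$ agree (split the seminorms into $|\tau|\leq R$ and $|\tau|>R$), and hence every $S^m_{\rho,\delta}$-continuous functional agreeing with the integral on $C_c^\infty$ must equal $\lim_j\int e^{i\phi}a_j u=I_\phi^a(u)$. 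This establishes (i). For (ii), the same estimate, read with $a$ fixed and $u$ ranging over $C_c^\infty(K)$ for arbitrary compact $K$, shows that $u\mapsto I_\phi^a(u)$ is linear and bounded by finitely many $C^N$-seminorms of $u$, hence lies in $\mathscr D'(\rr n)$ and has order $\leq N$ on $K$, i.e. finite order; and for $a$ with compact $\tau$-support the formula is literally $\int e^{i\phi(\cdot,\tau)}a(\cdot,\tau)\,d\tau$, which justifies the notation.

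I expect the main obstacle to be the construction of $L$ and the accompanying bookkeeping: keeping its coefficients smooth across $\tau=0$ (handled by the cutoff $\chi$) while verifying precisely that each application of ${}^tL$ improves the symbol order by $\delta_0=\min(\rho,1-\delta)$ — without $\rho>0$ and $\delta<1$ no decay is gained and the scheme collapses. A secondary technical point is the uniqueness assertion just described, where Fréchet-density fails and one must argue on bounded subsets of $S^m_{\rho,\delta}$.
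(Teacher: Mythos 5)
The paper states this proposition without proof, referring the reader to H\"ormander's Theorem~7.8.2, and your proposal is a correct reconstruction of precisely that standard argument: the first-order operator $L$ satisfying $L\,e^{i\phi}=e^{i\phi}$, the observation that ${}^tL$ lowers symbol order by $\delta_0=\min(\rho,1-\delta)>0$, and iteration until $({}^tL)^N(au)$ is absolutely integrable in $\tau$, giving both the continuous extension in $a$ and, with $a$ fixed, boundedness by finitely many $C^N$-seminorms of $u$ on each compact. One small imprecision: your approximants $a_j=a\,\chi(\tau/j)$ need not lie in $C_c^\infty(\rr{n+d})$ unless $a$ already has compact $z$-support; since $I_\phi^{\cdot}(u)$ only sees $a$ on $\supp u\times\rr d$, you should also insert a $z$-cutoff equal to $1$ on $\supp u$, after which the density-on-bounded-sets argument runs unchanged. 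Otherwise the proof is complete and matches the cited source in approach.
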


A general result on the wave front set of oscillatory integrals is \cite[Theorem 8.1.9]{Hormander1}, which follows.
\begin{prop}\label{WF}
For the distribution $I_\phi^a$ we have the inclusion
\begin{equation}\label{WFFI}
WF(I_\phi^a) \subseteq \{(z,\nabla_z\phi(z,\tau)):
\nabla_\tau\phi(z,\tau)=0 \}.
\end{equation}
\end{prop}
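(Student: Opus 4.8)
The plan is to run the standard non‑stationary phase argument; this is \cite[Theorem 8.1.9]{Hormander1}, and I sketch its proof. Fix $(z_0,\xi_0)$ with $\xi_0\neq 0$ and suppose that $\xi_0\neq\nabla_z\phi(z_0,\tau)$ for every $\tau\in\rr d\setminus 0$ with $\nabla_\tau\phi(z_0,\tau)=0$; I must show $(z_0,\xi_0)\notin WF(I_\phi^a)$. Pick $\psi\in C_c^\infty(\rr n)$ with $\psi(z_0)\neq 0$, supported in a small neighbourhood $U$ of $z_0$ to be fixed below. Testing $I_\phi^a$ against $\psi\,e^{-i z\cdot\xi}$ gives, as an oscillatory integral in which $\xi$ plays the role of a large parameter,
$$
\widehat{\psi I_\phi^a}(\xi)=\iint e^{i\Phi(z,\tau;\xi)}\,\psi(z)\,a(z,\tau)\,dz\,d\tau ,\qquad \Phi(z,\tau;\xi)=\phi(z,\tau)-z\cdot\xi ,
$$
so that $\nabla_z\Phi=\nabla_z\phi(z,\tau)-\xi$ and $\nabla_\tau\Phi=\nabla_\tau\phi(z,\tau)$. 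The goal is to exhibit a conic neighbourhood $V$ of $\xi_0$ on which $\widehat{\psi I_\phi^a}(\xi)=O(|\xi|^{-N})$ for every $N$.

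The essential point is a geometric lemma. Let $K=\{\omega\in S^{d-1}:\nabla_\tau\phi(z_0,\omega)=0\}$; it is compact. By Euler's relation $\phi=\tau\cdot\nabla_\tau\phi$, so $\phi=0$ on $\{z_0\}\times K$, and condition (ii) forces $\nabla_z\phi(z_0,\omega)\neq 0$ for $\omega\in K$. Since $\nabla_z\phi$ is homogeneous of degree one and $\nabla_\tau\phi$ of degree zero in $\tau$, if $\nabla_z\phi(z_0,\omega)$ were a positive multiple $s\xi_0$ of $\xi_0$ for some $\omega\in K$, then $\tau:=\omega/s$ would satisfy $\nabla_\tau\phi(z_0,\tau)=0$ and $\nabla_z\phi(z_0,\tau)=\xi_0$, contradicting the hypothesis; hence the compact set of directions $\{\nabla_z\phi(z_0,\omega)/|\nabla_z\phi(z_0,\omega)|:\omega\in K\}$ stays at angular distance at least some $\beta>0$ from $\xi_0/|\xi_0|$. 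By continuity and compactness I then fix $\delta_0>0$, the neighbourhood $U$ of $z_0$, and the conic neighbourhood $V$ of $\xi_0$ of opening $<\beta/4$, so that for all $z\in U$, $\tau\neq 0$ with $|\nabla_\tau\phi(z,\tau)|<\delta_0$ one has $|\nabla_z\phi(z,\tau)|\geq c_0|\tau|$ and the angle between $\nabla_z\phi(z,\tau)$ and any $\xi\in V$ is at least $\beta/2$; the last fact gives $|\nabla_z\phi(z,\tau)-\xi|\geq c(|\tau|+|\xi|)$ there. Extracting these uniform quantitative bounds from the qualitative hypothesis is the step I expect to be the main obstacle.

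The remainder is routine integration by parts. Fix a small $\epsilon>0$ with $\epsilon\cdot\sup_{z\in U,|\tau|=1}|\nabla_z\phi(z,\tau)|<1/4$, take $|\xi|$ large, and split the $(z,\tau)$–integration, by a partition of unity, into three regions. On $|\tau|\lesssim\epsilon|\xi|$ we have $|\nabla_z\Phi|\geq|\xi|/2$; integrating by parts $k$ times with $L_z=\tfrac1i|\nabla_z\Phi|^{-2}\,\nabla_z\Phi\cdot\nabla_z$ (so $L_z e^{i\Phi}=e^{i\Phi}$) and using $\partial_z^\beta a\in S^{m+\delta|\beta|}_{\rho,\delta}$, the integrand is $O\big(|\xi|^{-k}\langle\tau\rangle^{m+k\delta}\big)$, which integrated over $|\tau|\lesssim\epsilon|\xi|$ is $O(|\xi|^{-(1-\delta)k+m+d})$; since $\delta<1$ this is rapidly decreasing as $k\to\infty$. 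On $|\tau|\gtrsim\epsilon|\xi|$ with $|\nabla_\tau\phi|\geq\delta_0$ the phase is non‑stationary in $\tau$; integrating by parts $k$ times with $L_\tau=\tfrac1i|\nabla_\tau\phi|^{-2}\,\nabla_\tau\phi\cdot\nabla_\tau$ and using $\partial_\tau^\alpha a\in S^{m-\rho|\alpha|}_{\rho,\delta}$ with $\rho>0$, the integrand is $O(\langle\tau\rangle^{m-\rho k})$, integrated over $|\tau|\gtrsim\epsilon|\xi|$ giving $O(|\xi|^{m-\rho k+d})$, again rapidly decreasing. Finally on $|\tau|\gtrsim\epsilon|\xi|$ with $|\nabla_\tau\phi|<\delta_0$ the geometric lemma gives $|\nabla_z\Phi|\geq c(|\tau|+|\xi|)$, so integrating by parts $k$ times in $z$ as before the integrand is $O\big((|\tau|+|\xi|)^{-k}\langle\tau\rangle^{m+k\delta}\big)=O(\langle\tau\rangle^{m-(1-\delta)k})$ for $|\tau|\geq 1$, integrated over $|\tau|\gtrsim\epsilon|\xi|$ giving rapid decay in $|\xi|$ once more.

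Putting the three contributions together, $\widehat{\psi I_\phi^a}(\xi)=O(|\xi|^{-N})$ for all $N$ uniformly for $\xi\in V$, hence $(z_0,\xi_0)\notin WF(I_\phi^a)$, which is exactly \eqref{WFFI}. Two routine points remain: on the first region the $\tau$–integration is over a bounded set, so no regularization is needed, while on the other two regions one integrates by parts enough times to render the $\tau$–integral absolutely convergent, which legitimizes the manipulations through the definition of $I_\phi^a$ as an oscillatory integral; and the derivatives in the integrations by parts that fall on the partition of unity or on the coefficients of $L_z,L_\tau$ rather than on $a$ only produce bounded extra factors (powers of $(|\tau|+|\xi|)^{-1}$, respectively $|\tau|^{-1}$), which do not affect the estimates.
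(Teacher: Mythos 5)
The paper gives no proof of this proposition; it is quoted verbatim as Theorem 8.1.9 from H\"ormander's book. Your sketch correctly reconstructs that theorem's standard non-stationary-phase proof -- the compactness lemma separating $\xi_0$ from the critical directions of $\phi(z_0,\cdot)$, the $\xi$-dependent three-region decomposition of the $(z,\tau)$-domain, and the integration-by-parts estimates exploiting $\delta<1$ and $\rho>0$ -- so you supply exactly the argument the paper leaves implicit.
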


Now we use the previous result in the study of the wave front set of
the Wigner distribution. More precisely, in the following proposition
we prove that, considering a function $f$ in a Shubin class, the
wave front set of the Wigner distribution $W_f$ is not only ``vertical'' in
the dual variables $(\eta,x)$, but the singular support of $W_f$ is also contained in the
``horizontal'' subspace $\xi=0$ of the space variables $(t,\xi)$.

\begin{prop}
If $\rho>0$ and $f\in \Gamma^m_\rho(\rr d)$ then
\begin{equation}\label{inclusion3}
WF(W_f)\subseteq\{(t,0;0,x)\in \rr {4d}: t\in \rr d, x\in\rr
d \setminus 0 \}.
\end{equation}
\end{prop}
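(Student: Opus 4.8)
The plan is to exhibit $W_f$ as a H\"ormander oscillatory integral and then read off the wave front set from Proposition \ref{WF}. Using $z=(t,\xi)\in\rr{2d}$ as base variable (so $n=2d$) and $\tau\in\rr d$ as fiber variable, I would set
\[
\phi(z,\tau)=\phi((t,\xi),\tau)=-2\pi\,\tau\cdot\xi,\qquad a(z,\tau)=a((t,\xi),\tau)=f(t+\tau/2)\,\overline{f(t-\tau/2)}.
\]
Then $\phi$ is real-valued, smooth, positively homogeneous of degree one in $\tau$, and its full gradient $\nabla\phi=(\nabla_t\phi,\nabla_\xi\phi,\nabla_\tau\phi)=(0,-2\pi\tau,-2\pi\xi)$ vanishes only when $\tau=0$; hence $\phi$ is a phase function on $\Gamma=\rr{2d}_z\times(\rr d_\tau\setminus0)$. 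Since $f\in\Gamma^m_\rho(\rr d)$ is a smooth function of polynomial growth, $f\otimes\overline f\circ\kappa$ is a tempered distribution whose partial Fourier transform $\mathscr F_2$ coincides, tested against $C_c^\infty(\rr{2d})$, with the oscillatory integral $I_\phi^a$; thus $W_f=I_\phi^a$ in $\mathscr D'(\rr{2d})$, in view of the definition \eqref{wd1}.

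The first real step is to check that $a\in S^{2m}_{\rho,0}(\rr{2d}\times\rr d)$, which is where the hypothesis enters. As $a$ does not depend on $\xi$, only $\partial_t$ and $\partial_\tau$ derivatives are relevant, and by the Leibniz rule $\partial^\beta_z\partial^\alpha_\tau a$ is a finite linear combination, with constant coefficients, of terms $(\partial^{\gamma_1}f)(t+\tau/2)\,\overline{(\partial^{\gamma_2}f)(t-\tau/2)}$ with $|\gamma_1|+|\gamma_2|=|\alpha|+|\beta|$. For $(t,\xi)$ ranging in a compact $K\subset\rr{2d}$ the variable $t$ stays in a fixed ball, so $\eabs{t\pm\tau/2}$ and $\eabs{\tau}$ are comparable uniformly in such $t$ and all $\tau\in\rr d$ (an elementary Peetre-type estimate). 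Using the defining bound of $\Gamma^m_\rho$,
\[
\bigl| (\partial^{\gamma_1}f)(t+\tau/2)\,\overline{(\partial^{\gamma_2}f)(t-\tau/2)} \bigr|
\le C\,\eabs{\tau}^{m-\rho|\gamma_1|}\,\eabs{\tau}^{m-\rho|\gamma_2|}
= C\,\eabs{\tau}^{2m-\rho(|\alpha|+|\beta|)}
\le C\,\eabs{\tau}^{2m-\rho|\alpha|},
\]
uniformly for $(t,\xi)\in K$, where I used $\rho>0$ and $|\beta|\ge0$. Hence $a\in S^{2m}_{\rho,0}(\rr{2d}\times\rr d)$, a legitimate symbol class since $\rho\in(0,1]$.

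Then I would invoke Proposition \ref{WF}: from $W_f=I_\phi^a$ and $a\in S^{2m}_{\rho,0}$ it gives
\[
WF(W_f)=WF(I_\phi^a)\subseteq\{(z,\nabla_z\phi(z,\tau)):\ \nabla_\tau\phi(z,\tau)=0\}.
\]
Here $\nabla_z\phi((t,\xi),\tau)=(0,-2\pi\tau)$ and $\nabla_\tau\phi((t,\xi),\tau)=-2\pi\xi$, so the stationary condition forces $\xi=0$, while on $\Gamma$ one has $\tau\ne0$, whence $\nabla_z\phi\ne0$ automatically (consistently with the wave front set containing only nonzero covectors). Therefore $WF(W_f)\subseteq\{((t,0);(0,-2\pi\tau)):\ t\in\rr d,\ \tau\in\rr d\setminus0\}$, which, after relabeling $x=-2\pi\tau$, is exactly \eqref{inclusion3}.

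The only delicate point is the symbol estimate: a priori $f$ grows polynomially in $t$, which would be fatal for a \emph{global} symbol bound, but the H\"ormander classes $S^m_{\rho,\delta}$ require the estimates only to be locally uniform in $z=(t,\xi)$, and once $t$ is confined to a compact set the growth of $f$ is harmless while the gain $m-\rho|\gamma|$ per derivative survives — this is precisely why $\rho>0$ is needed. A minor, routine technicality is the identification of $W_f$ (defined via $\mathscr S'$) with the oscillatory integral $I_\phi^a$, which follows from the smoothness and polynomial growth of $f$ together with the definition \eqref{wd1} of the Wigner distribution.
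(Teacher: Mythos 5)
Your proof is correct and takes essentially the same route as the paper: you exhibit $W_f$ as the oscillatory integral $I_\phi^a$ with phase $\phi((t,\xi),\tau)=-2\pi\,\xi\cdot\tau$ and symbol $a=f(t+\tau/2)\overline{f(t-\tau/2)}$, verify the $S^{2m}_{\rho,0}$ estimate via Leibniz and a Peetre-type comparison on compacts in $t$, and then read off the inclusion from Proposition~\ref{WF}. The only difference is cosmetic (order of presentation); the key observations, including the role of $\rho>0$ in surviving the $|\beta|$-derivatives, are the same.
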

\begin{proof}
If $f\in \Gamma^m_\rho(\rr d)$ we have
$f \otimes \overline{f} \circ \kappa \in S^{2m}_{\rho,0}(\rr {2d})$. In
fact, for every $\alpha, \beta \in \nn d$ and for $t$ in a
compact set $K\subset \rr d$, we have:
\begin{align*}
& |\partial^\alpha_\tau \partial^\beta_t
f(t+\tau/2)\overline{f(t-\tau/2)}|
\\
& =\left|\partial^\alpha_\tau \sum_{\beta'\le
\beta}c_{\beta,\beta'} \partial^{\beta'}f (t+\tau/2)
\overline{ \partial^{\beta-\beta'}f (t-\tau/2)} \right|
\\
& =\left|\sum_{\alpha'\le \alpha} \sum_{\beta'\le
\beta}c_{\alpha,\alpha'}c_{\beta,\beta'} \partial^{\alpha'+\beta'}f (t+\tau/2)
\overline{ \partial^{\alpha-\alpha'+\beta-\beta'}f (t-\tau/2)}
\right|
\\
& \le C \sum_{\alpha'\le \alpha} \sum_{\beta'\le
\beta} \langle
t+\tau/2\rangle^{m-\rho|\alpha'+\beta'|}\langle
t-\tau/2\rangle^{m-\rho|\alpha-\alpha'+\beta-\beta'|}
\\
& \le C \sum_{\alpha'\le \alpha} \sum_{\beta'\le \beta}
\langle t
\rangle^{|m-\rho|\alpha'+\beta'||+|m-\rho|\alpha-\alpha'+\beta-\beta'||}
\langle \tau
\rangle^{m-\rho|\alpha'+\beta'|+m-\rho|\alpha-\alpha'+\beta-\beta'|}
\\
& \le C \langle \tau \rangle^{2m-\rho|\alpha+\beta|}\le C \langle
\tau \rangle^{2m-\rho|\alpha|}
\end{align*}
where Petree's inequality $\langle t \pm \tau/2\rangle^s\le C_s \langle
t\rangle^{|s|}\langle \tau \rangle^s$,\  $s\in \ro$, \ has been
used.

We set now $z=(t,\xi)\in\rr d_t\times \rr d_\xi$ and remark that a
symbol $a(t,\tau)\in S^m_{\rho,\delta}(\rr d_t\times \rr d_\tau)$ can
also be seen as a symbol in $S^m_{\rho,\delta}(\rr {2d}_z\times \rr
d_\tau)$, and likewise a phase function $\phi(t,\tau)$ on $\rr
d_t\times \rr d_\tau$ is also a phase function on $\rr {2d}_z\times
\rr d_\tau$. We can therefore apply Proposition \ref{WF} in the case
$n=2d$ with
$$
a(t,\xi,\tau) := f(t+\tau/2)\overline{f(t-\tau/2)}\in
S^{2m}_{\rho,0}(\rr {2d}_{t,\xi}\times \rr d_\tau)
$$
and phase function on $\rr {2d}_{t,\xi} \times \rr d_\xi$
$$
\phi(t,\xi,\tau)=-2 \pi \xi\cdot\tau.
$$
Thus we have $W_f = I^a_\phi$, and
\begin{align*}
& \nabla_z \phi=(0,-2 \pi \tau)\in \rr {2d}, \\
& \nabla_\tau\phi=-2 \pi \xi\in \rr d,
\end{align*}
which means that \eqref{WFFI} implies the inclusion \eqref{inclusion3}.
\end{proof}

\section{Restriction of the Wigner distribution to fixed time}\label{restriction}

In this section we shall study the restriction operator of a distribution $F \in \mathscr D'(\rr {2d})$ to the submanifold $t \times \rr d \subseteq \rr {2d}$ for $t \in \rr d$ fixed, which is denoted
\begin{equation}\label{restriction1}
R_t F = F(t,\cdot).
\end{equation}
This map is not well defined for any $F \in \mathscr D'(\rr {2d})$.
But according to \cite[Corollary 8.2.7]{Hormander1}, the restriction \eqref{restriction1} gives a well defined element in $\mathscr D'(\rr d)$ provided $WF(F) \cap N_t = \emptyset$, where
\begin{equation}\label{normalbundle1}
N_t = ( t \times \rr d)  \times (\rr d \times 0)
\end{equation}
is the normal bundle of the submanifold $t \times \rr d \subseteq \rr {2d}$.
For $f \in WFW^{\neq}(\rr {d})$ we have $WF(W_f) \cap N_t = \emptyset$ for any $t \in \rr d$, so \cite[Corollary 8.2.7]{Hormander1} implies that the restriction $W_f(t,\cdot)$ gives an element in $\mathscr D'(\rr d)$ for any $t \in \rr d$.

The distribution $f = \delta_0 = \delta_0(\rr d)$ has Wigner distribution $W_f = \delta_0 \otimes 1$ \cite{Grochenig1}, which has wave front set
$$
WF(W_f) = (0 \times \rr d) \times ( \rr d \setminus 0 \times 0).
$$
This example shows that $WF(W_f) \cap N_t = \emptyset$ is not always satisfied.

We will study the continuity properties of the restriction \eqref{restriction1}, and for that purpose
we need the following definitions and results from \cite{Hormander1}. For a closed set $\Gamma \subseteq \rr d \times (\rr d \setminus 0)$, conic in the second variable, we define
$$
\mathscr D_\Gamma '(\rr d) = \{ u \in \mathscr D '(\rr d), \ WF(u) \subseteq \Gamma \}.
$$
If $u, u_j \in \mathscr D_\Gamma '(\rr d)$ for $1 \leq j < \infty$ we say that $u_j \rightarrow u$ in $\mathscr D_\Gamma '(\rr d)$ provided
\begin{align}
& u_j \longrightarrow u \quad \mbox{in} \quad \mathscr D '(\rr d), \quad \mbox{and} \label{konkonv1} \\
& \sup_{ \xi \in V} |\xi|^n |\mathscr F( \varphi (u-u_j) )(\xi)| \rightarrow 0, \quad j \rightarrow \infty, \quad \forall n \in \no, \label{konkonv2}
\end{align}
for a closed conic set $V \subseteq \rr d$ and $\varphi \in C_c^\infty(\rr d)$ such that
$$
\Gamma \cap(\supp \varphi \times V) = \emptyset.
$$
It is sometimes more convenient to use instead of \eqref{konkonv2} the equivalent requirement \cite{Hormander1}
\begin{equation}\tag*{(\ref{konkonv2})$'$}
\sup_j \sup_{ \xi \in V} |\xi|^n |\wh{ \varphi u_j }(\xi)| < \infty \quad \forall n \in \no.
\end{equation}
For a closed set $\Gamma \subseteq \rr {2d} \times (\rr {2d} \setminus 0)$, conic in the second variable, we set
\begin{equation}\nonumber
\Gamma_t = \{ (\xi,x) \in \rr {2d}: \exists \eta \in \rr d: (t,\xi;\eta,x) \in \Gamma \}.
\end{equation}\
We will use \cite[Theorem 8.2.4]{Hormander1}.
Let $\Gamma \subseteq \rr {2d} \times (\rr {2d} \setminus 0)$ be a closed set, conic in the second variable, such that $\Gamma \cap N_t = \emptyset$. Then \cite[Theorem 8.2.4]{Hormander1} implies in particular that $R_t$ is a continuous map
\begin{equation}\label{Rcont1}
R_t: \mathscr D_\Gamma '(\rr {2d}) \mapsto \mathscr D_{\Gamma_t} '(\rr d).
\end{equation}

Next we study $F \in C_{\rm slow}^\infty(\rr {2d})$ and the following two operators:
(i) the restriction \eqref{restriction1} to the submanifold $t \times \rr d \subseteq \rr {2d}$ for $t \in \rr d$ fixed, and (ii) Fourier transformation in the second variable $F \mapsto \mathscr F_2 F$. The following results say that these two operators commute for $F \in C_{\rm slow}^\infty(\rr {2d})$. First we need a lemma.

\begin{lem}\label{restrictionlemma1}
Let $F \in C_{\rm slow}^\infty (\rr {2d})$, $\Gamma = \rr {2d} \times U$ where
\begin{equation}\label{Udef1}
U = \{ (\eta,x) \in \rr {2d} \setminus 0: |\eta| \leq |x| \},
\end{equation}
let $\chi \in C_c^\infty(\rr d)$, $\chi \geq 0$, $\chi(x)=1$ for $|x| \leq 1$, $\chi_j(x)=\chi(x/j)$, $j>0$ integer, and $F_j=F \chi_j \otimes \chi_j$. Then we have
\begin{equation}\label{partfourierkont1}
\mathscr F_2 F_j \longrightarrow \mathscr F_2 F \quad \mbox{in $\mathscr D_\Gamma'(\rr {2d})$ as $j \rightarrow \infty$}.
\end{equation}
\end{lem}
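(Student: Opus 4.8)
The goal is to show that the truncated functions $F_j = F\chi_j \otimes \chi_j$ converge to $F$ in the strong topology of $\mathscr D_\Gamma'(\rr{2d})$ after applying the partial Fourier transform $\mathscr F_2$, where $\Gamma = \rr{2d}\times U$ and $U$ is the closed cone $\{|\eta|\le|x|\}$. By the definition of convergence in $\mathscr D_\Gamma'$ recalled just before the statement, this splits into two tasks: (a) $\mathscr F_2 F_j \to \mathscr F_2 F$ in $\mathscr D'(\rr{2d})$ (the condition \eqref{konkonv1}), and (b) the uniform rapid-decay bound \ref{konkonv2}$'$ on $\wh{\psi \mathscr F_2 F_j}$ in a closed cone $V\subseteq\rr{2d}$ with $\supp\psi\times V$ disjoint from $\Gamma$. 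Since $\Gamma$ has full base $\rr{2d}$, disjointness from $\Gamma$ forces $V\cap U=\emptyset$, i.e. $V$ is contained in the open cone $\{|\eta|>|x|\}$ together with the origin; this is the ``time-direction'' cone, and it is exactly there that we must extract decay.

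\emph{Step 1: the easy convergence \eqref{konkonv1}.} Since $F\in C_{\rm slow}^\infty(\rr{2d})\subseteq\mathscr S'(\rr{2d})$ and $\chi_j\otimes\chi_j\to 1$ in $\mathcal O_M$ (or at least boundedly with all derivatives locally eventually equal to those of $1$), the products $F_j\to F$ in $\mathscr S'(\rr{2d})$, hence in $\mathscr D'(\rr{2d})$. Applying the continuous map $\mathscr F_2$ on $\mathscr S'(\rr{2d})$ preserves this, so $\mathscr F_2 F_j\to\mathscr F_2 F$ in $\mathscr D'(\rr{2d})$. This is routine.

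\emph{Step 2: the uniform cone estimate \ref{konkonv2}$'$.} Fix $\psi\in C_c^\infty(\rr{2d})$ and a closed cone $V$ with $\overline V\subseteq\{(\eta,x):|\eta|>|x|\}\cup\{0\}$. Write $\mathscr F_2 F_j(t,\xi) = \mathscr F_2(F\chi_j\otimes\chi_j)(t,\xi)$; since $\mathscr F_2$ acts only in the second variable, $\mathscr F_2 F_j(t,\xi) = \chi_j(t)\,\bigl(\mathscr F_2(F\chi_j)(t,\cdot)\bigr)(\xi) = \chi_j(t)\,(\mathscr F_2 F)(t,\cdot) * \wh{\chi_j}(\xi)$ up to the obvious notation; more precisely $\mathscr F_2 F_j = (\chi_j\otimes 1)\cdot\bigl((\mathscr F_2(F\chi_j\otimes 1)) *_{2} (\delta\otimes\wh{\chi_j})\bigr)$, where $*_2$ is convolution in the second variable. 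Then $\wh{\psi\,\mathscr F_2 F_j} = \wh\psi * \wh{\mathscr F_2 F_j}$, and one computes $\wh{\mathscr F_2 F_j}(\zeta_1,\zeta_2) = \wh{F\chi_j\otimes\chi_j}(\zeta_1,-\zeta_2)$ (a full Fourier transform, with a sign flip in the second block from the partial inverse/direct convention). Thus, setting $g_j = F\chi_j\otimes\chi_j\in C_c^\infty(\rr{2d})$, we must bound $\wh\psi * \wh{g_j}$ uniformly in $j$, with rapid decay, on the cone $V' = \{(\zeta_1,\zeta_2):(\zeta_1,-\zeta_2)\in V\}$, which is again a closed cone inside $\{|\zeta_1|>|\zeta_2|\}\cup\{0\}$.

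\emph{Step 3: extracting the decay — the main obstacle.} The heart of the matter is to show that $\wh{g_j}$ decays rapidly, uniformly in $j$, in a conic neighbourhood of the time direction $\{|\zeta_1|>|\zeta_2|\}$. This is where the hypothesis $F\in C_{\rm slow}^\infty$ — i.e. \emph{every} derivative of $F$ grows at most like one fixed polynomial $\eabs{(t,\xi)}^N$ — is essential: for $g_j = F\,(\chi_j\otimes\chi_j)$ we integrate by parts in the \emph{first} variable $t$ only (writing $\zeta_1^\alpha e^{-2\pi i t\cdot\zeta_1} = C_\alpha\,\partial_t^\alpha e^{-2\pi i t\cdot\zeta_1}$), moving $\partial_t^\alpha$ onto $F\chi_j$. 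Each $t$-derivative of $F$ produces at most $\eabs{(t,\xi)}^N$, and derivatives of $\chi_j$ produce bounded factors (with the crucial point that $\partial_t^\beta\chi_j(t)$ is bounded uniformly in $j$, being $j^{-|\beta|}(\partial^\beta\chi)(t/j)$). The $t$-integration runs over $|t|\lesssim j$, and the $\xi$-integration over $|\xi|\lesssim j$, so we obtain a bound of the form $|\zeta_1|^{|\alpha|}\,|\wh{g_j}(\zeta)|\le C_\alpha\, j^{N+2d+\cdots}$, i.e. growing in $j$ — not yet uniform. To fix this we do \emph{not} integrate by parts all the way; instead we combine a few $t$-integrations by parts (enough to gain any prescribed power of $|\zeta_1|$) with a standard non-stationary-phase/exponential-decay argument, OR — cleaner — we reduce to the already-proven machinery: by Lemma~\ref{convolutiondecay1} and the computation $\wh{W_f W_\varphi}$ is replaced here by the elementary identity $\wh{g_j} = \wh F * \wh{\chi_j\otimes\chi_j}$ with $\wh{\chi_j\otimes\chi_j}\to\delta_0$ but not in a way that controls cones. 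The genuinely robust route, which I would pursue, is: \textbf{(i)} first prove the cone estimate for $g = F\cdot(\text{fixed cutoff})\in C_c^\infty$, directly via integration by parts in $t$ on the region $|\zeta_1|>|\zeta_2|$, exploiting that on this cone $|\zeta_1|\ge\tfrac12\eabs\zeta$ so gaining powers of $|\zeta_1|$ gains powers of $\eabs\zeta$, exactly as in \eqref{decaycone2}; \textbf{(ii)} then handle the $j$-dependence by noting $g_j - g_k$ is supported where $|t|$ or $|\xi|$ exceeds $\min(j,k)$, and on that region the same integration-by-parts bound, together with the gain $\eabs{(t,\xi)}^{-M}$ obtainable by integrating by parts additionally in $\xi$ (legal since the phase is $-2\pi\xi\cdot\zeta_2$ and on $V'$ one has... no — $\zeta_2$ may vanish), forces the tails to be uniformly small. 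The delicate point is precisely that on the cone $|\zeta_1|>|\zeta_2|$ the variable $\zeta_2$ conjugate to $\xi$ is \emph{not} bounded below, so we cannot integrate by parts in $\xi$; instead the $\xi$-localization must come entirely from the compact support of $\chi_j$ in $\xi$, and uniformity in $j$ then rests on the finiteness of $\int\eabs\xi^{N}\,|\wh\psi(\cdots)|$-type integrals after the $\wh\psi*$ convolution smooths things out. Assembling these estimates gives both the uniform bound \ref{konkonv2}$'$ and, via Step 1 plus an $\ep/3$ argument, the quantitative statement \eqref{konkonv2}, completing \eqref{partfourierkont1}.

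I expect Step 3 — getting the cone decay of $\wh{g_j}$ \emph{uniformly in the truncation parameter $j$}, with the asymmetry that one may integrate by parts only in $t$ and not in $\xi$ — to be the sole real difficulty; Steps 1 and 2 are bookkeeping with the definitions of $\mathscr D_\Gamma'$-convergence and of $\mathscr F_2$.
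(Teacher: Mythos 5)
Your proposal correctly reduces the problem to the two conditions \eqref{konkonv1} and \ref{konkonv2}$'$, and Step~1 is fine. The genuine gap is in Step~3, and you have in fact pointed at it yourself: you try to establish the cone decay for $\wh{F_j}$ (essentially $\wh{g_j}$) directly, by integrating by parts in $t$, and you correctly observe that the naive bound acquires a factor $j^{N+2d+\cdots}$ because both the $t$- and $\xi$-integrations run over a set of radius $\sim j$. Your suggested repairs (tail estimates on $g_j - g_k$, ``the $\wh\psi *$ convolution smooths things out'') gesture at the right place but do not resolve the difficulty; as you note, you cannot integrate by parts in $\xi$ on the cone $|\eta|>|x|$, and the $g_j - g_k$ decomposition still leaves you with an integral over a region of size $\sim\min(j,k)$ with no decay in $\xi$ to pay for it.

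The missing idea in the paper's proof is that one should never try to bound $\wh{g_j}$ alone: instead, manipulate $\mathscr F(\varphi\,\mathscr F_2 F_j)$ as a whole, writing $\mathscr F(\varphi\,\mathscr F_2 F_j)=\mathscr F^{-1}(\varphi\,\mathscr F_2 F_j)(-\cdot,-\cdot)=(\mathscr F_1^{-1}F_j)*\mathscr F^{-1}\varphi(-\cdot,-\cdot)$ and moving $\mathscr F_1^{-1}$ onto the test function. This produces the identity
\begin{equation*}
\mathscr F(\varphi\,\mathscr F_2 F_j)(\eta,x)=\iint_{\rr{2d}}F(t,\tau)\,e^{-2\pi i t\cdot\eta}\,\mathscr F_2^{-1}\varphi(t,-x-\tau)\,\chi_j(t)\,\chi_j(\tau)\,dt\,d\tau,
\end{equation*}
and the crucial structural fact is that $\mathscr F_2^{-1}\varphi(\cdot,y)$ is compactly supported in the first variable in a \emph{fixed} compact set independent of $y$ (because $\varphi\in C_c^\infty(\rr{2d})$ and $\mathscr F_2^{-1}$ acts only in the second slot). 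Consequently for all $j\ge J$ sufficiently large the factor $\chi_j(t)$ is identically $1$ on the support and can simply be deleted; the remaining $\chi_j(\tau)$ is bounded by $\|\chi\|_\infty$, uniformly in $j$. After that, integration by parts in $t$ only, the $C_{\rm slow}^\infty$ bound $|\partial_t^\alpha F(t,\tau)|\le C_\alpha\eabs t^N\eabs\tau^N$, and the Schwartz decay of $\mathscr F_2^{-1}\varphi(t,\cdot)$ in the $\tau$-variable yield the uniform-in-$j$ estimate $|\eta^\alpha\mathscr F(\varphi\mathscr F_2 F_j)(\eta,x)|\le C_\alpha\eabs x^{N+d+1}$, which on the cone $|\eta|>|x|$ gives rapid decay in $\eabs{(\eta,x)}$. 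So the compact support of $\varphi$ is used in a sharper way than ``$\wh\psi*$ smooths things out'': it converts, via $\mathscr F_2^{-1}$, into a fixed $t$-localization that kills the $j$-dependence outright. Without this specific step your argument does not close.
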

\begin{proof}
If $\mathscr F_2 \varphi \in C_c^\infty(\rr {2d})$ then $\varphi \in \mathscr S(\rr {2d})$ and we have, since $F_j \rightarrow F$ in $\mathscr S'(\rr {2d})$ as $j \rightarrow \infty$
\begin{equation}\nonumber
\begin{aligned}
( \mathscr F_2 F_j, \mathscr F_2 \varphi ) = (F_j,\varphi) \rightarrow (F,\varphi) = ( \mathscr F_2 F, \mathscr F_2 \varphi ), \quad j \rightarrow \infty.
\end{aligned}
\end{equation}
Thus $\mathscr F_2 F_j \rightarrow \mathscr F_2 F$ in $\mathscr D'(\rr {2d})$ as $j \rightarrow \infty$ and the first criterion \eqref{konkonv1} for \eqref{partfourierkont1} is proved. To prove the second criterion we use \eqref{konkonv2}$'$.
Since $\mathscr F^{-1} (g(-\eta-\cdot)) = M_{-\eta} \wh g$ for $g \in \mathscr S(\rr d)$ we have for $\varphi \in C_c^\infty (\rr {2d})$
\begin{equation}\nonumber
\begin{aligned}
\mathscr F (\varphi \mathscr F_2 F_j) (\eta,x) & =
\mathscr F^{-1} (\varphi \mathscr F_2 F_j) (-\eta,-x) \\
& = \mathscr F_1^{-1} F_j * \mathscr F^{-1} \varphi (-\eta,-x) \\
& = \langle \mathscr F_1^{-1} F_j, \mathscr F^{-1} \varphi (-\eta-\cdot,-x-\cdot) \rangle \\
& = \langle F_j, \mathscr F_1^{-1} \left( \mathscr F^{-1} \varphi (-\eta-\cdot,-x-\cdot) \right) \rangle \\
& = \langle F_j, M_{-\eta,0} \mathscr F_2^{-1} \varphi (\cdot,-x-\cdot) \rangle \\
& = \iint_{\rr {2d}} F(t,\tau) e^{-2 \pi i t \cdot \eta} \mathscr F_2^{-1} \varphi (t,-x-\tau) \chi_j(t) \chi_j(\tau) dt d\tau \\
& = \iint_{\rr {2d}} F(t,\tau) e^{-2 \pi i t \cdot \eta} \mathscr F_2^{-1} \varphi (t,-x-\tau) \chi_j(\tau) dt d\tau,
\end{aligned}
\end{equation}
for $j \geq J$ for sufficiently large $J$. In fact, the last equality holds for such $j$ since $\mathscr F_2^{-1} \varphi (\cdot,y)$ has support in a fixed compact set, independent of $y \in \rr d$. The assumption implies that
\begin{equation}\label{partderestimate1}
|\pdd t \alpha  F(t,\tau)| \leq C_\alpha \eabs{t}^{N} \eabs{\tau}^{N}, \quad \alpha \in \nn d,
\end{equation}
for some $C_\alpha, N>0$. By means of integration by parts, \eqref{partderestimate1} and the observation that $\mathscr F_2^{-1} \varphi \in \mathscr S$, we obtain for $j \geq J$ and any $\alpha \in \nn d$
\begin{equation}\nonumber
\begin{aligned}
& \left| \eta^\alpha \mathscr F (\varphi \mathscr F_2 F_j) (\eta,x) \right| \\
& = \left|  \iint_{\rr {2d}} (-2 \pi i)^{-|\alpha|} \pdd t \alpha \left( e^{-2 \pi i t \cdot \eta} \right) F(t,\tau) \mathscr F_2^{-1} \varphi (t,-x-\tau) \chi_j(\tau) dt d\tau \right| \\
& \leq \sum_{\beta \leq \alpha} C_\beta \iint_{\rr {2d}} \left| \pdd t \beta F(t,\tau) \right| \left| \partial_t^{\alpha-\beta} \mathscr F_2^{-1} \varphi (t,-x-\tau) \right| dt d\tau \\
& \leq \sum_{\beta \leq \alpha} C_\beta \iint_{\rr {2d}} \eabs{t}^{N} \eabs{\tau}^{N} \eabs{t}^{-N-d-1} \eabs{\tau+x}^{-N-d-1} dt d\tau \\
& \leq C_\alpha \sum_{\beta \leq \alpha} \eabs{x}^{N+d+1} \iint_{\rr {2d}} \eabs{t}^{-d-1} \eabs{\tau}^{N-N-d-1} dt d\tau \\
& \leq C_\alpha \eabs{x}^{N+d+1},
\end{aligned}
\end{equation}
for some constant $C_\alpha>0$. This gives for $j \geq J$
\begin{equation}\label{wavefrontcont1}
\begin{aligned}
& \left| \eta^\alpha \mathscr F (\varphi \mathscr F_2 F_j) (\eta,x) \right| \\
& = \eabs{\eta}^{-2 (N+d+1)} \left| \eabs{\eta}^{2 (N+d+1)} \eta^\alpha \mathscr F (\varphi \mathscr F_2 F_j) (\eta,x) \right| \\
& = \eabs{\eta}^{-2 (N+d+1)} \left| \sum_{|\gamma| \leq 2 (N+d+1)} C_\gamma \eta^{\alpha+\gamma} \mathscr F (\varphi \mathscr F_2 F_j) (\eta,x) \right| \\
& \leq C_\alpha \eabs{\eta}^{-2 (N+d+1)} \eabs{x}^{N+d+1}, \quad \alpha \in \nn d, \quad C_\alpha>0.
\end{aligned}
\end{equation}
Now let $V \subseteq \rr d$ be a closed conic set and let $\varphi \in C_c^\infty(\rr {2d}) \setminus 0$ satisfy
\begin{equation}\nonumber
\emptyset = \Gamma \cap(\supp \varphi \times V) = \supp \varphi \times (U \cap V),
\end{equation}
which by \eqref{Udef1} means that $(\eta,x) \in V \setminus 0 \Rightarrow |x| < |\eta|$.
Let $n \in \no \setminus 0$ and let $(\eta,x) \in V \setminus 0$. Then we have
$$
|(\eta,x)|^n = (|\eta|^2+|x|^2)^{n/2} \leq 2^{n/2} |\eta|^n \leq (2 d)^{n/2} \max_{\alpha: \ |\alpha|=n} |\eta^\alpha|.
$$
For $j \geq J$ \eqref{wavefrontcont1} thus implies
\begin{equation}\label{wavefrontestimate1}
\begin{aligned}
\sup_{(\eta,x) \in V} |(\eta,x)|^n \left| \mathscr F (\varphi \mathscr F_2 F_j) (\eta,x) \right|
& \leq C_n \sup_{(\eta,x) \in V} \sup_{|\alpha|=n} \left| \eta^\alpha \mathscr F (\varphi \mathscr F_2 F_j) (\eta,x) \right| \\
& \leq C_n \sup_{(\eta,x) \in V} \sup_{|\alpha|=n} C_\alpha \eabs{\eta}^{-2 (N+d+1)} \eabs{x}^{N+d+1} \\
& \leq C_n \sup_{(\eta,x) \in V} \sup_{|\alpha|=n} C_\alpha \eabs{\eta}^{-2 (N+d+1)} \eabs{\eta}^{N+d+1} \\
& \leq C_n
\end{aligned}
\end{equation}
for some $C_n>0$, independently of $j \geq J$. This means that the second criterion \eqref{konkonv2}$'$ for
the convergence \eqref{partfourierkont1} has been proved.

It remains to verify that $\mathscr F_2 F_j, \mathscr F_2 F \in \mathscr D_\Gamma '(\rr {2d})$. If $(\eta,x) \notin U$ and $(\eta,x) \neq 0$ then $|x| < |\eta|$ and there is an open conic neighborhood $U'$ containing $(\eta,x)$ of the form $U'=\{(\eta,x): \ |x| < C|\eta| \}$ for $C>0$ such that $U \cap U' = \emptyset$. The estimate \eqref{wavefrontestimate1} with $V$ replaved by $U'$ shows that
$WF( \mathscr F_2 F_j ) \subseteq \Gamma$. Likewise $WF( \mathscr F_2 F) \subseteq \Gamma$ so we have proved $\mathscr F_2 F_j, \mathscr F_2 F \in \mathscr D_\Gamma '(\rr {2d})$.
\end{proof}

\begin{prop}\label{restrictionprop1}
If $F \in C_{\rm slow}^\infty (\rr {2d})$ and $t \in \rr d$ then we have
\begin{equation}\label{commute1}
\mathscr F ( F(t,\cdot) ) = (\mathscr F_2 F)(t,\cdot) \quad \mbox{in $\mathscr D'(\rr d)$}.
\end{equation}
\end{prop}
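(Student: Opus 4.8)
The plan is to approximate $F$ by the compactly supported smooth functions $F_j = F\chi_j \otimes \chi_j$ from Lemma \ref{restrictionlemma1}, for which \eqref{commute1} is simply the classical Fourier inversion identity, and then pass to the limit on both sides using the wave-front-controlled convergence \eqref{partfourierkont1} together with the continuity \eqref{Rcont1} of the restriction operator.

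First I would check that $\Gamma = \rr {2d} \times U$, with $U$ as in \eqref{Udef1}, satisfies $\Gamma \cap N_t = \emptyset$ for every $t \in \rr d$: a covector of a point of $\Gamma$ lies in $U$, hence is nonzero and satisfies $|\eta| \leq |x|$, so $x \neq 0$, whereas every covector of $N_t$ has $x = 0$. Consequently \cite[Theorem 8.2.4]{Hormander1}, in the form \eqref{Rcont1}, makes $R_t$ continuous from $\mathscr D_\Gamma'(\rr {2d})$ to $\mathscr D_{\Gamma_t}'(\rr d)$. Combining this with Lemma \ref{restrictionlemma1}, which gives $\mathscr F_2 F_j \to \mathscr F_2 F$ in $\mathscr D_\Gamma'(\rr {2d})$ and in particular $\mathscr F_2 F_j,\ \mathscr F_2 F \in \mathscr D_\Gamma'(\rr {2d})$, I obtain
$$
(\mathscr F_2 F_j)(t,\cdot) \longrightarrow (\mathscr F_2 F)(t,\cdot) \quad \mbox{in } \mathscr D'(\rr d), \quad j \to \infty.
$$
Here one also notes that for each $j$ the function $\mathscr F_2 F_j$ is smooth (since $F_j \in C_c^\infty$), so its distributional restriction to $t \times \rr d$ coincides with its pointwise restriction.

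Next I would treat the left-hand side of \eqref{commute1}. For each $j$ the function $F_j(t,\cdot) = \chi_j(t)\,\chi_j F(t,\cdot)$ is smooth and compactly supported on $\rr d$, so the elementary identity
$\mathscr F\bigl(F_j(t,\cdot)\bigr) = \chi_j(t)\,\mathscr F\bigl(\chi_j F(t,\cdot)\bigr) = (\mathscr F_2 F_j)(t,\cdot)$
holds as an equality of functions. Since $F \in C_{\rm slow}^\infty(\rr {2d})$, for fixed $t$ the function $F(t,\cdot)$ is slowly increasing and smooth, hence a tempered distribution, and $\chi_j F(t,\cdot) \to F(t,\cdot)$ in $\mathscr S'(\rr d)$ by dominated convergence (testing against $\psi \in \mathscr S(\rr d)$, the integrand is dominated by $C\eabs{\tau}^N|\psi(\tau)| \in L^1$). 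Taking $j \geq |t|$ so that $\chi_j(t)=1$ and using continuity of $\mathscr F$ on $\mathscr S'(\rr d)$, we get $\mathscr F\bigl(F_j(t,\cdot)\bigr) \to \mathscr F\bigl(F(t,\cdot)\bigr)$ in $\mathscr S'(\rr d)$, hence in $\mathscr D'(\rr d)$.

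Putting the two convergences together, the sequence $\mathscr F\bigl(F_j(t,\cdot)\bigr) = (\mathscr F_2 F_j)(t,\cdot)$ converges in $\mathscr D'(\rr d)$ both to $\mathscr F\bigl(F(t,\cdot)\bigr)$ and to $(\mathscr F_2 F)(t,\cdot)$; by uniqueness of limits in $\mathscr D'(\rr d)$ these coincide, which is \eqref{commute1}. The substantive work — the wave-front convergence \eqref{partfourierkont1} — has already been done in Lemma \ref{restrictionlemma1}, so no real obstacle remains; the only points demanding care are the verification $\Gamma \cap N_t = \emptyset$, the identification of the distributional and pointwise restrictions of the smooth function $\mathscr F_2 F_j$, and the routine $\mathscr S'$-convergence $\chi_j F(t,\cdot) \to F(t,\cdot)$.
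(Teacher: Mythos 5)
Your proof is correct and follows essentially the same strategy as the paper's: approximate $F$ by $F_j = F\chi_j\otimes\chi_j$, invoke Lemma~\ref{restrictionlemma1} together with the restriction continuity \eqref{Rcont1} to obtain $(\mathscr F_2 F_j)(t,\cdot)\to(\mathscr F_2 F)(t,\cdot)$ in $\mathscr D'(\rr d)$, and then identify the limits. Your treatment of the other side is in fact a bit cleaner: you observe directly that $F_j(t,\cdot)=\chi_j F(t,\cdot)$ once $j\geq|t|$, pass to the limit in $\mathscr S'(\rr d)$ by dominated convergence, and apply continuity of $\mathscr F$, whereas the paper first derives $\mathscr D'$-convergence of $F_j(t,\cdot)$ from the restriction machinery and then upgrades it to $\mathscr S'$-convergence via a three-$\varepsilon$ cutoff argument before using Fubini to compare the pairings.
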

\begin{proof}
The result \eqref{commute1} says that $\mathscr F \circ R_t = R_t \circ \mathscr F_2$ on $C_{\rm slow}^\infty(\rr {2d})$, where the equality is understood in $\mathscr D'(\rr d)$.
Let $U \subset \rr {2d} \setminus 0$ be defined by \eqref{Udef1}.
Then $\Gamma = \rr {2d} \times U$ is a closed set, conic in the second variable, and by \eqref{normalbundle1} we have
\begin{equation}\nonumber
\Gamma \cap N_t = ( t \times \rr d)  \times U \cap (\rr d \times 0) = \emptyset.
\end{equation}
Thus the restriction \eqref{restriction1} is a continuous map between distribution spaces according to \eqref{Rcont1}.
By Lemma \ref{restrictionlemma1} and \eqref{Rcont1} we have $(\mathscr F_2 F)(t,\cdot) \in \mathscr D'(\rr d)$.

Let $\chi \in C_c^\infty(\rr d)$, $\chi \geq 0$, $\chi(x)=1$ for $|x| \leq 1$, $\chi_j(x)=\chi(x/j)$ and $F_j=F \chi_j \otimes \chi_j \in C_c^\infty(\rr {2d})$. As before $F_j \rightarrow F$ in $\mathscr S'(\rr {2d})$, and therefore also in $\mathscr D'(\rr {2d})$, as $j \rightarrow \infty$. For any $\varphi \in C_c^\infty(\rr {2d})$ we have $\varphi(F_j-F)=\varphi (\chi_j \otimes \chi_j-1)F \equiv 0$ for $j$ sufficiently large, which together with $F_j \rightarrow F$ in $\mathscr D'(\rr {2d})$ imply that $F_j \rightarrow F$ in $\mathscr D_\Gamma '(\rr {2d})$, since $F_j,F \in \mathscr D_\Gamma '(\rr {2d})$ because $WF(F_j) = WF(F) = \emptyset$. Thus \eqref{Rcont1} and \eqref{konkonv1} implies that $F_j(t,\cdot) \rightarrow F(t,\cdot)$ in $\mathscr D'(\rr d)$. We have in fact
\begin{equation}\label{Rcont2}
F_j(t,\cdot) \rightarrow F(t,\cdot) \quad \mbox{in} \quad \mathscr S'(\rr {d}).
\end{equation}
To see this, let $\varphi \in \mathscr S(\rr d)$. Since $|F(t,\tau)| \leq C \eabs{t}^N \eabs{\tau}^N$ for some $C,N>0$ we have for any $\ep>0$
\begin{equation}\nonumber
|\langle F_j(t,\cdot),\varphi (\chi_k-1) \rangle | \leq 2 C \eabs{t}^N \int_{|\tau|\geq k} \eabs{\tau}^N |\varphi(\tau)| d\tau < \ep \quad \mbox{uniformly in $j$}
\end{equation}
for $k \geq k_1$ and $k_1$ sufficiently large ($t$ is fixed). Since $F(t,\cdot) \in \mathscr S'(\rr d)$ and $\varphi \chi_k \rightarrow \varphi$ in $\mathscr S(\rr d)$ as $k \rightarrow \infty$ we have
\begin{equation}\nonumber
|\langle F(t,\cdot),\varphi (1-\chi_k) \rangle| < \ep \quad \mbox{if $k \geq k_2$}
\end{equation}
for $k_2$ sufficiently large. The latter two estimates now give for $k \geq \max(k_1,k_2)$
\begin{equation}\nonumber
\begin{aligned}
& |\langle F(t,\cdot) - F_j(t,\cdot),\varphi \rangle | \\
& \leq | \langle F(t,\cdot),\varphi(1-\chi_k) \rangle | + | \langle F(t,\cdot)-F_j(t,\cdot),\varphi \chi_k \rangle| + | \langle F_j(t,\cdot),\varphi(\chi_k-1) \rangle | \\
& < 3 \ep
\end{aligned}
\end{equation}
for $j$ sufficiently large, since $F_j(t,\cdot) \rightarrow F(t,\cdot)$ in $\mathscr D'(\rr d)$. This proves \eqref{Rcont2}.

Now we use Lemma \ref{restrictionlemma1}. The convergence \eqref{partfourierkont1}, \eqref{Rcont1} and \eqref{konkonv1} give
\begin{equation}\nonumber
(\mathscr F_2 F_j)(t,\cdot) \longrightarrow (\mathscr F_2 F)(t,\cdot) \quad \mbox{in $\mathscr D'(\rr d)$ as $j \rightarrow \infty$}.
\end{equation}
For $\wh \varphi \in C_c^\infty(\rr d)$, this finally gives, using \eqref{Rcont2} and Fubini's theorem,
\begin{equation}\nonumber
\begin{aligned}
( \mathscr F(F(t,\cdot)), \wh \varphi ) & = (F(t,\cdot), \varphi ) \\
& = \lim_{j \rightarrow \infty} ( F_j(t,\cdot), \varphi ) \\
& = \lim_{j \rightarrow \infty} \int_{\rr d}  F_j(t,\tau) \overline{\varphi(\tau)} d \tau \\
& = \lim_{j \rightarrow \infty} \int_{\rr d}  F_j(t,\tau) \overline{ \left( \int_{\rr d} \wh \varphi(\xi) e^{2 \pi i \tau \cdot \xi} d \xi \right) } d \tau \\
& = \lim_{j \rightarrow \infty} \int_{\rr d}  (\mathscr F_2 F_j)(t,\xi) \overline{\wh \varphi(\xi)}d \xi \\
& = \lim_{j \rightarrow \infty} ( (\mathscr F_2 F_j)(t,\cdot), \wh \varphi ) \\
& = ( (\mathscr F_2 F)(t,\cdot), \wh \varphi ).
\end{aligned}
\end{equation}
\end{proof}

If $f \in C_{\rm slow}^\infty (\rr d)$ then $f \otimes \overline{f} \circ \kappa \in C_{\rm slow}^\infty (\rr {2d})$, so the definition of the Wigner distribution \eqref{wd1} combined with Proposition \ref{restrictionprop1} gives the following byproduct.

\begin{cor}\label{restrictioncor0}
If $f \in C_{\rm slow}^\infty (\rr d)$ and $t \in \rr d$ then we have
\begin{equation}\nonumber
\mathscr F ( f \otimes \overline{f} \circ \kappa (t,\cdot) ) = W_f(t,\cdot) \quad \mbox{in $\mathscr D'(\rr d)$}.
\end{equation}
\end{cor}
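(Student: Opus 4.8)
The plan is to obtain this as an immediate corollary of Proposition \ref{restrictionprop1}, applied to the particular function $F = f \otimes \overline{f} \circ \kappa$; explicitly, $F(x,y) = f(x+y/2)\overline{f(x-y/2)}$ for $x,y \in \rr d$. Since Proposition \ref{restrictionprop1} carries the hypothesis $F \in C_{\rm slow}^\infty(\rr {2d})$, the only step requiring a genuine argument is to check that $C_{\rm slow}^\infty(\rr d)$ is stable under the map $f \mapsto f \otimes \overline{f} \circ \kappa$.

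To see this, fix $f \in C_{\rm slow}^\infty(\rr d)$ and let $N>0$ be the exponent appearing in \eqref{conditionf} for $f$. For $\alpha,\beta \in \nn d$ the Leibniz rule combined with the chain rule (the arguments $x \pm y/2$ are affine in $(x,y)$, so differentiating them only produces numerical factors) expresses $\pdd x \alpha \pdd y \beta F(x,y)$ as a finite linear combination of terms
\begin{equation}\nonumber
c_{\gamma,\delta}\, \pd \gamma f(x+y/2)\, \overline{\pd \delta f(x-y/2)}, \qquad |\gamma|+|\delta| = |\alpha|+|\beta| .
\end{equation}
By \eqref{conditionf} each such term is bounded by $C \eabs{x+y/2}^N \eabs{x-y/2}^N$, and since $|x \pm y/2| \leq \tfrac32 |(x,y)|$ we get $\eabs{x \pm y/2} \leq 2 \eabs{(x,y)}$, so each term is bounded by $C \eabs{(x,y)}^{2N}$. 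Summing the finitely many contributions yields
\begin{equation}\nonumber
|\pdd x \alpha \pdd y \beta F(x,y)| \leq C_{\alpha,\beta} \eabs{(x,y)}^{2N}, \qquad x,y \in \rr d,\ \alpha,\beta \in \nn d ,
\end{equation}
with the exponent $2N$ independent of the order $|\alpha|+|\beta|$ of differentiation. This is precisely condition \eqref{conditionf} on $\rr {2d}$, hence $F \in C_{\rm slow}^\infty(\rr {2d})$.

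With this in hand, Proposition \ref{restrictionprop1} gives, for every $t \in \rr d$,
\begin{equation}\nonumber
\mathscr F\big( (f \otimes \overline{f} \circ \kappa)(t,\cdot) \big) = \big( \mathscr F_2 (f \otimes \overline{f} \circ \kappa) \big)(t,\cdot) \quad \mbox{in } \mathscr D'(\rr d),
\end{equation}
and since $\mathscr F_2(f \otimes \overline{f} \circ \kappa) = W_f$ by the definition \eqref{wd1} of the Wigner distribution (which applies here because $C_{\rm slow}^\infty(\rr d) \subseteq \mathscr S'(\rr d)$), the right-hand side equals $W_f(t,\cdot)$, which is the claimed identity. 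I do not expect any real obstacle here: all the substance is contained in Proposition \ref{restrictionprop1}, and the corollary is essentially a bookkeeping step. The only point worth emphasizing is the uniformity of the polynomial exponent in the estimate for $F$; if the exponent $N$ in \eqref{conditionf} were allowed to depend on $\alpha$, then the product $f(x+y/2)\overline{f(x-y/2)}$ would in general only satisfy \eqref{conditionf} with an $\alpha$-dependent exponent, hence fall outside the hypothesis $C_{\rm slow}^\infty(\rr {2d})$ of Proposition \ref{restrictionprop1}.
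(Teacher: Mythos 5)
Your proof is correct and follows essentially the same route as the paper: the paper simply asserts (in the sentence preceding the corollary) that $f \in C_{\rm slow}^\infty(\rr d)$ implies $f \otimes \overline{f} \circ \kappa \in C_{\rm slow}^\infty(\rr {2d})$, and then invokes Proposition \ref{restrictionprop1} together with the definition \eqref{wd1}. You have merely supplied the Leibniz-rule verification of that stability claim, which the paper omits.
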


Finally Lemma \ref{fouriercompact1} gives

\begin{cor}\label{restrictioncor1}
If $f \in \mathscr{F E}'(\rr d)$ and $t \in \rr d$ then we have
\begin{equation}\nonumber
\mathscr F ( f \otimes \overline{f} \circ \kappa (t,\cdot) ) = W_f(t,\cdot) \quad \mbox{in $\mathscr D'(\rr d)$}.
\end{equation}
\end{cor}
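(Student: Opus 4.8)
The plan is to deduce this statement directly from the two results that immediately precede it, so that essentially no new work is required. The key observation is that the hypothesis $f \in \mathscr{F E}'(\rr d)$ is strictly stronger than the hypothesis $f \in C_{\rm slow}^\infty(\rr d)$ appearing in Corollary \ref{restrictioncor0}, and the conclusion is verbatim the same. Thus the entire content of the proof is a membership statement followed by a citation.

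First I would invoke Lemma \ref{fouriercompact1}, which asserts the inclusion $\mathscr F \mathscr E'(\rr d) \subseteq C_{\rm slow}^\infty(\rr d)$. Applying it to the given $f$ yields $f \in C_{\rm slow}^\infty(\rr d)$. Second, I would simply apply Corollary \ref{restrictioncor0} to this $f$: since $f \in C_{\rm slow}^\infty(\rr d)$ and $t \in \rr d$, that corollary gives precisely
\begin{equation}\nonumber
\mathscr F ( f \otimes \overline{f} \circ \kappa (t,\cdot) ) = W_f(t,\cdot) \quad \mbox{in $\mathscr D'(\rr d)$},
\end{equation}
which is the assertion to be proved. (Recall that Corollary \ref{restrictioncor0} is itself obtained by noting that $f \in C_{\rm slow}^\infty(\rr d)$ implies $f \otimes \overline f \circ \kappa \in C_{\rm slow}^\infty(\rr {2d})$ and then feeding this into Proposition \ref{restrictionprop1}, which establishes that restriction to fixed time $R_t$ commutes with partial Fourier transform $\mathscr F_2$ on $C_{\rm slow}^\infty(\rr {2d})$; but none of this needs to be reproved here.)

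Consequently I do not anticipate any genuine obstacle: all the substantive work—the wave front set estimate placing $\mathscr F \mathscr E'$ inside $WFW^\perp \subseteq WFW^{\neq}$, the continuity of the restriction operator on $\mathscr D_\Gamma'$, the approximation argument via $F_j = F\chi_j \otimes \chi_j$, and the Fubini interchange—has already been carried out in Lemma \ref{restrictionlemma1}, Proposition \ref{restrictionprop1}, and Corollaries \ref{restrictioncor0}. The only point worth a moment's care is purely bookkeeping: checking that the statement of Corollary \ref{restrictioncor0} indeed applies to an arbitrary $f$ in the smaller class, which is immediate once the inclusion from Lemma \ref{fouriercompact1} is in hand. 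The proof therefore consists of a single line.
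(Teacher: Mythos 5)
Your proposal is exactly the paper's argument: the paper introduces Corollary~\ref{restrictioncor1} with the single phrase ``Finally Lemma~\ref{fouriercompact1} gives,'' i.e.\ one uses $\mathscr F \mathscr E'(\rr d) \subseteq C_{\rm slow}^\infty(\rr d)$ and then applies Corollary~\ref{restrictioncor0}. Your bookkeeping is correct and complete.
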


\section{A Wigner distribution moment formula for the instantaneous frequency}\label{wignermoment}

Denote the modulus of $z =x+iy \in \co$ by $r=|z|$ and the argument (or phase) by $\varphi=\arg z$.
The polar-to-rectangular coordinate transformation on $\rr 2 \simeq \co$ is defined by
$(x,y) = g(r,\varphi) = (r \cos \varphi,r \sin \varphi)$.
Denote the positive reals by $\ro_+$.
Then each of the restrictions $g_1$ and $g_2$, defined respectively by
$$
g_1 := g|_{\ro_+ \times (-\pi,\pi)}, \quad g_2 := g|_{\ro_+ \times (0,2\pi)},
$$
is an analytic map with an analytic inverse, mapping surjectively on the open sets $U_1$ and $U_2$, respectively, defined by
\begin{equation}\nonumber
\begin{aligned}
& g_1: \ro_+ \times (-\pi,\pi) \mapsto \co \setminus \{ (-\infty,0] +i 0 \} := U_1, \\
& g_2: \ro_+ \times (0,2\pi) \mapsto \co \setminus \{ [0,+\infty) +i 0 \} := U_2.
\end{aligned}
\end{equation}
The Jacobian of $g$ is
\begin{equation}\nonumber
Dg = \left(
  \begin{array}{cc}
      \frac{\partial x}{\partial r} & \frac{\partial x}{\partial \varphi} \\
      \frac{\partial y}{\partial r} & \frac{\partial y}{\partial \varphi}
    \end{array}
  \right)
  = \left(
  \begin{array}{cc}
      \cos \varphi & -r \sin \varphi \\
      \sin \varphi & r \cos \varphi
    \end{array}
  \right)
\end{equation}
which is invertible with inverse
\begin{equation}\nonumber
(Dg)^{-1}
  = \frac1{r} \left(
  \begin{array}{cc}
      r \cos \varphi & r \sin \varphi \\
      - \sin \varphi & \cos \varphi
    \end{array}
  \right)
  = \frac1{x^2 + y^2} \left(
  \begin{array}{cc}
      x \sqrt{x^2 + y^2}  & y \sqrt{x^2 + y^2} \\
      - y & x
    \end{array}
  \right)
\end{equation}
provided $r>0$. The inverse function theorem applied to each of the restrictions $g_1$ and $g_2$ thus gives
\begin{equation}\label{partialderivatives1}
\frac{\partial \arg z}{\partial x} = \frac{-y}{x^2+y^2}, \quad \frac{\partial \arg z}{\partial y} = \frac{x}{x^2+y^2}, \quad x+iy \in U_1 \cup U_2.
\end{equation}
(Note that $g_1^{-1}(z)$ and $g_2^{-1}(z)$ have arguments that differ by $2 \pi$ for $\im z<0$, but this does not affect the partial derivatives of $\arg z$.)

Let $f(t)=u(t) + i v(t)$ be a function $f: \rr d \mapsto \co$. If $f$ is continuous then $U_{f,1} := \{ t \in \rr d: f(t) \in U_1 \}$ and $U_{f,2} := \{ t \in \rr d: f(t) \in U_2 \}$ are open sets, and it follows from above that each of $\arg f(t) = (g_1^{-1} \circ f (t))_2: U_{f,1} \mapsto \ro$ and $\arg f(t) = (g_2^{-1} \circ f (t))_2: U_{f,2} \mapsto \ro$ are continuous. Here $(g_1^{-1} \circ f (t))_2$ means the second component of $g_1^{-1} (f (t))$. If $f \in C^1(\rr d)$ then, likewise, each of $\arg f(t): U_{f,1} \mapsto \ro$ and $\arg f(t): U_{f,2} \mapsto \ro$ are differentiable. From \eqref{partialderivatives1} and the chain rule we obtain the partial derivative with respect to $t_j$, in both cases, as
\begin{equation}\label{fasderivata1}
\begin{aligned}
\frac{\partial \arg f(t)}{\partial t_j}
& = \frac{\partial \arg z}{\partial x} \Big|_{z=f(t)} \frac{\partial u(t)}{\partial t_j} + \frac{\partial \arg z}{\partial y} \Big|_{z=f(t)} \frac{\partial v(t)}{\partial t_j} \\
& = \frac{u(t) \partial_{t_j} v (t) - v(t) \partial_{t_j} u(t)}{u^2(t) + v^2(t)}
\end{aligned}
\end{equation}
for $t \in U_{f,1} \cup U_{f,2} = \{ t \in \rr d: \ f(t) \neq 0 \}$ and $1 \leq j \leq d$. Thus
$$
\nabla \arg f(t) = \frac{u(t) \nabla v (t) - v(t) \nabla u(t)}{u^2(t) + v^2(t)}, \quad f(t) \neq 0.
$$
The \emph{instantaneous frequency} \cite{Cohen1} of $f(t)$ is defined as the normalized gradient $(2 \pi)^{-1} \nabla \arg f(t)$ with domain $\{ t: f(t) \neq 0 \}$.
For a character $e^{2 \pi i \xi \cdot t}$ with frequency $\xi$ the instantaneous frequency is thus $\xi$ constantly, which means that the instantaneous frequency is a generalization of the concept constant (global) frequency.

There is a connection between the instantaneous frequency of a sufficiently smooth and decaying function and its Wigner distribution \cite{Cohen1,Flandrin1}. In fact, the instantaneous frequency can be written, for fixed $t \in \rr d$, as a normalized frequency moment of order one of the Wigner distribution. (In the engineering literature a heuristic version of this result is well known \cite{Cohen1}.)

\begin{prop}\label{ifwignerprop1}
Suppose $\ep>0$ and $f \in H^{\frac{d}{2}+1+\ep}(\rr d)$. Then for any $t \in \rr d$ such that $f(t) \neq 0$ we have
\begin{equation}\label{ifwigner1}
\frac1{2 \pi}\nabla \arg f(t) = \frac{\int_{\rr d} \xi W_f(t,\xi) d\xi }{ \int_{\rr d} W_f(t,\xi) d\xi }.
\end{equation}
\end{prop}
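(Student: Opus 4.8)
The plan is to reduce \eqref{ifwigner1} to two explicit moment identities for a partial Fourier transform and then to substitute the formula \eqref{fasderivata1} for $\partial_{t_j}\arg f$. Fix $t\in\rr d$ and set
$$
\Phi_t(\tau)=f(t+\tau/2)\,\overline{f(t-\tau/2)},\qquad \tau\in\rr d,
$$
so that $W_f(t,\cdot)=\wh{\Phi_t}$ by the definition \eqref{wd1}; this restriction to fixed time is legitimate because $f\in L^2(\rr d)$ forces $W_f\in C_0(\rr {2d})$ by \eqref{wignercont1}, and because, as shown below, $\Phi_t\in L^1(\rr d)$. Granting the integrability facts discussed in the next paragraph, Fourier inversion applied to $\Phi_t$ and to $\partial_{\tau_j}\Phi_t$ — together with $\wh{\partial_{\tau_j}\Phi_t}=2\pi i\,\xi_j\wh{\Phi_t}$ — gives
$$
\int_{\rr d}W_f(t,\xi)\,d\xi=\Phi_t(0)=|f(t)|^2,\qquad
\int_{\rr d}\xi_j\,W_f(t,\xi)\,d\xi=\frac1{2\pi i}\,\partial_{\tau_j}\Phi_t(0).
$$
Differentiating under the product, $\partial_{\tau_j}\Phi_t(0)=\tfrac12\big(\partial_j f(t)\,\overline{f(t)}-f(t)\,\overline{\partial_j f(t)}\big)=i\operatorname{Im}\big(\partial_j f(t)\,\overline{f(t)}\big)$, and writing $f=u+iv$ one has $\operatorname{Im}(\partial_j f\,\overline f)=u\,\partial_j v-v\,\partial_j u$ and $|f|^2=u^2+v^2$. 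Since $f(t)\ne0$, the $j$-th component of the right-hand side of \eqref{ifwigner1} therefore equals $\tfrac1{2\pi}(u\,\partial_j v-v\,\partial_j u)/(u^2+v^2)$, which is $\tfrac1{2\pi}\,\partial_{t_j}\arg f(t)$ by \eqref{fasderivata1}. Assembling the components yields \eqref{ifwigner1}.

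It remains to justify the integrability, which is the crux of the matter. From $f\in H^{d/2+1+\ep}(\rr d)$ and Cauchy--Schwarz,
$$
\int_{\rr d}\eabs{\xi}\,|\wh f(\xi)|\,d\xi\le \|f\|_{H^{d/2+1+\ep}}\left(\int_{\rr d}\eabs{\xi}^{-d-2\ep}\,d\xi\right)^{1/2}<\infty,
$$
so $\eabs{\xi}\wh f\in L^1(\rr d)$; in particular $f\in C^1(\rr d)$ with $\partial_j f$ continuous and bounded, and (since also $f\in H^1(\rr d)$) $f,\partial_j f\in L^2(\rr d)$. Consequently, for every $t$, Cauchy--Schwarz in $\tau$ gives $\Phi_t,\partial_{\tau_j}\Phi_t\in L^1(\rr d)$ (for instance $\|\Phi_t\|_{L^1}\le 2^d\|f\|_{L^2}^2$), and $\Phi_t\in C^1(\rr d)$ as a product of $C^1$ functions. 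The remaining point is $\eabs{\xi}\wh{\Phi_t}\in L^1(\rr d)$, which I would obtain by writing $\Phi_t=a_t b_t$ with $a_t(\tau)=f(t+\tau/2)$, $b_t(\tau)=\overline{f(t-\tau/2)}$; a change of variables yields $\wh{a_t}(\xi)=2^d e^{4\pi i t\cdot\xi}\wh f(2\xi)$ and $\wh{b_t}(\xi)=2^d e^{-4\pi i t\cdot\xi}\overline{\wh f(2\xi)}$, so $\wh{\Phi_t}=\wh{a_t}*\wh{b_t}$ satisfies $|\wh{\Phi_t}(\xi)|\le 2^d(|\wh f|*|\wh f|)(2\xi)$. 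Using Peetre's inequality $\eabs{w}\le\eabs{w-\eta}\eabs{\eta}$ and Tonelli's theorem,
$$
\int_{\rr d}\eabs{\xi}\,|\wh{\Phi_t}(\xi)|\,d\xi\le\int_{\rr d}\eabs{w}\,(|\wh f|*|\wh f|)(w)\,dw\le\left(\int_{\rr d}\eabs{w}\,|\wh f(w)|\,dw\right)^2<\infty
$$
by the previous display. With $\eabs{\xi}\wh{\Phi_t}\in L^1(\rr d)$ in hand, $\wh{\Phi_t}\in L^1(\rr d)$ and $\xi_j\wh{\Phi_t}=\tfrac1{2\pi i}\wh{\partial_{\tau_j}\Phi_t}\in L^1(\rr d)$, so Fourier inversion applies to the continuous $L^1$ function $\Phi_t$, giving the zeroth moment, and to the continuous function $\partial_{\tau_j}\Phi_t$, giving the first moment; this closes the argument.

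The only genuinely technical step is the estimate $\eabs{\xi}\wh{\Phi_t}\in L^1(\rr d)$, i.e. the absolute convergence of the frequency moment $\int_{\rr d}\xi\,W_f(t,\xi)\,d\xi$ together with the applicability of Fourier inversion; the remaining manipulations — the two moment identities, the evaluation of $\partial_{\tau_j}\Phi_t(0)$, and the identification with \eqref{fasderivata1} — are routine.
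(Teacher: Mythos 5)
Your proof is correct and follows essentially the same route as the paper's: the same Cauchy--Schwarz estimate giving $\eabs{\xi}\wh f\in L^1$, the same reduction to Fourier inversion for a function in $C_0\cap L^1\cap\mathscr F L^1$, the same evaluation of $\partial_{\tau_j}\Phi_t(0)$, and the same final substitution into \eqref{fasderivata1}. The only organizational difference is that you obtain both moments at once from the single bound $\eabs{\xi}\wh{\Phi_t}\in L^1$ (via the Peetre/Tonelli convolution estimate), whereas the paper checks $C_0\cap L^1\cap\mathscr F L^1$ separately for $g$ and for $g_j=\partial_jf(t+\cdot/2)\overline{f(t-\cdot/2)}$; this is a slight streamlining, not a different argument.
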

\begin{proof}
First we note that $f \in H^{\frac{d}{2}+1+\ep} \subseteq L^2$ and \eqref{wignercont1} imply that $W_f \in C_0(\rr {2d})$. Hence the restriction of the Wigner distribution $W_f \mapsto W_f(t,\cdot)$ is a well defined continuous map $C_0(\rr {2d}) \mapsto C_0(\rr d)$ for any $t \in \rr d$.

Let $t \in \rr d$ be fixed arbitrary such that $f(t) \neq 0$.
The assumption $f \in H^{\frac{d}{2}+1+\ep}(\rr d)$ and the Sobolev embedding theorem imply that
\begin{equation}\label{sobolevembedding1}
f, \ \partial_j f \in C_0(\rr d), \quad 1 \leq j \leq d.
\end{equation}
Cauchy--Schwarz and $f \in L^2$ give
\begin{equation}\label{finvsuff1}
g := f(t+ \cdot/2) \overline{f(t-\cdot/2)} \in L^1(\rr d).
\end{equation}
The assumption $f \in H^{\frac{d}{2}+1+\ep}(\rr d)$ also gives
\begin{equation}\label{sobolev2}
\begin{aligned}
& \int_{\rr d} \eabs{\xi} |\wh f(\xi)| d\xi
& \leq \left( \int_{\rr d} \eabs{\xi}^{2(\frac{d}{2}+1+\ep)} |\wh f(\xi)|^2 d\xi \right)^{1/2} \left( \int_{\rr d} \eabs{\xi}^{-d - 2 \ep } d \xi \right)^{1/2} < \infty.
\end{aligned}
\end{equation}
Hence $\wh f \in L^1(\rr d)$ and since $\mathscr F (f(t+ \cdot/2))(\xi) = 2^d M_{t} \wh f(2 \xi)$ we have $\mathscr F (f(t+ \cdot/2)) \in L^1(\rr d)$. Thus $\mathscr F (\overline{f(t- \cdot/2)})=\overline{\mathscr F (f(t+ \cdot/2))}$ implies that
\begin{equation}\label{finvsuff2}
\mathscr F g = \mathscr F (f(t+ \cdot/2)) * \overline{\mathscr F (f(t+ \cdot/2))} \in L^1(\rr d).
\end{equation}
By \eqref{sobolevembedding1}, \eqref{finvsuff1} and \eqref{finvsuff2} we have $g \in ( C_0 \cap L^1 \cap \mathscr F L^1)(\rr d)$, which means that
the conditions for Fourier's inversion formula to hold pointwise \cite{Rudin1} for the function $g$ are satisfied. In particular
\begin{equation}\label{namnare1}
|f(t)|^2 = g(0) = \int_{\rr d} \wh g(\xi) d\xi = \int_{\rr d} W_f(t,\xi) d \xi,
\end{equation}
in the last step using the definition \eqref{wd1}.

Concerning the numerator in \eqref{ifwigner1} we obtain using integration by parts and the fact \eqref{sobolevembedding1} that $f$ vanishes at infinity
\begin{equation}\label{intbypart1}
\begin{aligned}
\xi_j W_f(t,\xi) & = \frac{-1}{2 \pi i} \int_{\rr d} f(t+\tau/2) \overline{f(t-\tau/2)} \partial_{\tau_j} (e^{-2 \pi i \tau \cdot \xi}) d \tau \\
& = \frac{1}{2 \pi i} \int_{\rr d} e^{-2 \pi i \tau \cdot \xi} \partial_{\tau_j} \left( f(t+\tau/2) \overline{f(t-\tau/2)} \right) d \tau
\end{aligned}
\end{equation}
for $1 \leq j \leq d$.
Let us study the function $g_j := \partial_j f(t+\cdot/2) \overline{f(t-\cdot/2)}$ and
$$
h_j(\tau) := \partial_{\tau_j} \left( f(t+\tau/2) \overline{f(t-\tau/2)} \right)
= \frac{1}{2} (g_j(\tau)-\overline{g_j(-\tau)})
$$
for $1 \leq j \leq d$ fixed.
We have $\xi_j \wh f(\xi) \in L^2(\rr d)$ and thus $\partial_j f \in L^2(\rr d)$.
Since $f \in L^2(\rr d)$ the Cauchy--Schwarz inequality gives $g_j \in L^1(\rr d)$.
From above we know that $\mathscr F (\overline{f(t- \cdot/2)}) \in L^1(\rr d)$ and likewise we have
$$
\mathscr F(\partial_j f(t+ \cdot/2))(\xi) = 2^d e^{4 \pi i \xi \cdot t} \widehat{\partial_j f}(2 \xi)
= 2^d 2 \pi i e^{4 \pi i \xi \cdot t} 2 \xi_j \wh f(2 \xi) \in L^1(\rr d)
$$
because of \eqref{sobolev2}. Thus
$$
\wh g_j = \mathscr F (\partial_j f(t + \cdot/2)) * \mathscr F (\overline{f(t- \cdot/2)}) \in L^1(\rr d).
$$
Invoking \eqref{sobolevembedding1} we have proved that $g_j \in ( C_0 \cap L^1 \cap \mathscr F L^1)(\rr d)$ which means that Fourier's inversion formula holds for $g_j$ and thus $h_j$. Integration of \eqref{intbypart1} gives, denoting $f(t)=u(t)+i v(t)$,
\begin{equation}\label{taljare1}
\begin{aligned}
\int_{\rr d} \xi_j W_f(t,\xi) d \xi & = \frac1{2 \pi i} \int_{\rr d} \mathscr F h_j(\xi) d \xi = \frac{h_j(0)}{2 \pi i}  \\
& = \frac{g_j(0) - \overline{g_j(0)}}{4 \pi i} \\
& = \frac1{4 \pi i} \left( \partial_j f(t) \overline{f(t)} - \overline{\partial_j f(t)} f(t) \right) \\
& = \frac1{2 \pi} \left( u(t) \partial_j v(t) - \partial_j u(t) v(t) \right).
\end{aligned}
\end{equation}
Finally \eqref{ifwigner1} follows from a combination of \eqref{fasderivata1}, \eqref{namnare1} and \eqref{taljare1}.
\end{proof}

Next we will prove a version of Proposition \ref{ifwignerprop1} for functions $f \in \mathscr F \mathscr E'(\rr d)$. Note that this assumption is neither a generalization nor a special case of the assumptions of Proposition \ref{ifwignerprop1}.

\begin{prop}\label{ifwignerprop2}
If $f \in \mathscr F \mathscr E'(\rr d)$ then
\begin{equation}\label{ifwdmom2}
\frac1{2 \pi} \nabla \arg f (t) = \frac{\langle W_f(t,\cdot),\xi \rangle}{\langle W_f(t,\cdot),1 \rangle}, \quad f(t) \neq 0.
\end{equation}
\end{prop}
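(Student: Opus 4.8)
The plan is to reduce \eqref{ifwdmom2} to the identity of Corollary \ref{restrictioncor1} and then to evaluate the two distribution actions by Fourier analysis of compactly supported distributions, mirroring the computation \eqref{namnare1}--\eqref{taljare1}.

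By Lemma \ref{fouriercompact1} we have $f \in \mathscr F \mathscr E'(\rr d) \subseteq C_{\rm slow}^\infty(\rr d)$, so $f \in C^1(\rr d)$; since $f(t)\neq 0$, the function $\arg f$ is differentiable at $t$ and \eqref{fasderivata1} holds there. Write $f = u + iv$ and set
\[
g(\tau) = f(t+\tau/2)\,\overline{f(t-\tau/2)} = \bigl( f \otimes \overline f \circ \kappa \bigr)(t,\tau) .
\]
By Corollary \ref{restrictioncor1} we have $W_f(t,\cdot) = \mathscr F g$ in $\mathscr D'(\rr d)$. Next I would note that $g \in \mathscr F \mathscr E'(\rr d)$: writing $f = \mathscr F u$ with $u \in \mathscr E'(\rr d)$, each of $\tau \mapsto f(t+\tau/2)$ and $\tau \mapsto \overline{f(t-\tau/2)}$ is the Fourier transform of a compactly supported distribution obtained from $u$ by modulation, dilation and reflection/conjugation, and since $\mathscr E'(\rr d)$ is stable under convolution their product $g$ is again the Fourier transform of a distribution $w \in \mathscr E'(\rr d)$. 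Hence $W_f(t,\cdot) = \mathscr F g = \mathscr F^2 w \in \mathscr E'(\rr d)$, so the actions $\langle W_f(t,\cdot),1\rangle$ and $\langle W_f(t,\cdot),\xi_j\rangle$, $1\leq j\leq d$, on the smooth functions $1$ and $\xi_j$ are well defined.

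The second step is to compute these actions. Since $\mathscr F \delta_0 = 1$ and $\mathscr F(\partial_{\tau_j}\delta_0) = 2\pi i\,\xi_j$, transferring the Fourier transform onto the (compactly supported) distribution $\delta_0$ and using $\mathscr F g = W_f(t,\cdot)$ together with $g \in C_{\rm slow}^\infty(\rr d)$ gives
\begin{equation}\nonumber
\langle W_f(t,\cdot),1\rangle = g(0) = |f(t)|^2 , \qquad \langle W_f(t,\cdot),\xi_j\rangle = \frac{1}{2\pi i}\,\partial_{\tau_j} g(0) , \quad 1\leq j\leq d .
\end{equation}
(Equivalently, one may replace $1$ and $\xi_j$ by functions of $C_c^\infty(\rr d)$ coinciding with them near $\supp W_f(t,\cdot)$ and then pass the Fourier transform onto the test function.) Differentiating the product yields $\partial_{\tau_j} g(0) = \tfrac12\bigl( \partial_j f(t)\,\overline{f(t)} - f(t)\,\overline{\partial_j f(t)} \bigr) = i\bigl( u(t)\partial_j v(t) - v(t)\partial_j u(t) \bigr)$, so that $\langle W_f(t,\cdot),\xi_j\rangle = \frac{1}{2\pi}\bigl( u(t)\partial_j v(t) - v(t)\partial_j u(t) \bigr)$. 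Dividing by $\langle W_f(t,\cdot),1\rangle = u(t)^2+v(t)^2$ and comparing with \eqref{fasderivata1} proves \eqref{ifwdmom2}.

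The step I expect to be the main obstacle is the rigorous core of the second paragraph: establishing that $W_f(t,\cdot)$ is compactly supported and justifying the evaluation of $\mathscr F g$ against the non-integrable functions $1$ and $\xi_j$. Once $W_f(t,\cdot)\in\mathscr E'(\rr d)$ is in hand these are routine consequences of the behaviour of the Fourier transform on $\mathscr E'(\rr d)$ (or of the Paley--Wiener theorem), and everything else is the short computation above.
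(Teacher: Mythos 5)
Your argument follows the same overall strategy as the paper's proof: invoke Corollary \ref{restrictioncor1} to identify $W_f(t,\cdot)$ with $\mathscr F g_t$ in $\mathscr D'(\rr d)$, establish $W_f(t,\cdot)\in\mathscr E'(\rr d)$, and compute the two pairings as $g_t(0)$ and $(2\pi i)^{-1}\partial_{\tau_j}g_t(0)$. Where you depart slightly is in the compact-support step: you argue algebraically that $g_t\in\mathscr F\mathscr E'(\rr d)$ from closure of $\mathscr E'(\rr d)$ under convolution, modulation, dilation, reflection and conjugation, whereas the paper derives the explicit entire-function bound $|g_t(z)|\leq C\eabs{t}^{2N}\eabs{{\re z}}^{2N}e^{A|{\im z}|}$ on $\cc d$ from the Paley--Wiener--Schwartz theorem and reads off compact support of $\mathscr F g_t$ from that same theorem; both routes are valid, and yours avoids the explicit bound. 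For evaluating the pairings, the clean justification --- and the one the paper uses --- is the fact that the inverse Fourier transform of $w\in\mathscr E'(\rr d)$ is the entire function $\tau\mapsto\langle w,e^{2\pi i\tau\cdot}\rangle$, which at $\tau=0$ gives $g_t(0)=\langle\mathscr F g_t,1\rangle$ and, after moving $\xi_j$ to a $\tau$-derivative, $(2\pi i)^{-1}\partial_{\tau_j}g_t(0)=\langle\mathscr F g_t,\xi_j\rangle$; your heuristic of transferring $\mathscr F$ onto $\delta_0$ is morally this, but the parenthetical alternative (cut $1$ off to $\psi\in C_c^\infty$, then write $\langle\mathscr F g_t,\psi\rangle=\langle g_t,\wh\psi\rangle$) does not by itself produce $g_t(0)$ and should be replaced by the citation just indicated.
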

\begin{proof}
Let $t \in \rr d$ be fixed such that $f(t) \neq 0$, and let $f(t)=u(t) + i v(t)$. Set
$$
g_t:= f(t+\cdot/2) \overline{f(t-\cdot/2)}.
$$
By Corollary \ref{restrictioncor1} we have $\mathscr F g_t = W_f(t,\cdot)$ in $\mathscr D'(\rr d)$. The assumption $f \in \mathscr F \mathscr E'(\rr d)$ together with the Paley--Wiener--Schwartz theorem implies that $f$ extends to an entire function on $\cc d$ and there exist $N,A,C>0$ such that
\begin{equation}\nonumber
|f(x+iy)| \leq C \eabs{x}^N e^{A |y|}, \quad x,y \in \rr d.
\end{equation}
Hence the function $g_t$ extends to an entire function on $\cc d$ with bound
\begin{equation}\nonumber
\begin{aligned}
|g_t (z)| & = |f(t+z/2) \overline{f(t-z/2)}| \\
& \leq C \eabs{t+{\re z}/2}^N \eabs{t- {\re z}/2}^N e^{2A |{\im z}/2|} \\
& \leq C \eabs{t}^{2N} \eabs{{\re z}}^{2N} e^{A |{\im z}|}.
\end{aligned}
\end{equation}
By the Paley--Wiener--Schwartz theorem $\mathscr F g_t \in \mathscr E'(\rr d)$ and $\mathscr F g_t$ is supported in a fixed compact set independent of $t \in \rr d$. Since $\mathscr F g_t = W_f(t,\cdot)$ in $\mathscr D'(\rr d)$ we have $W_f(t,\cdot) \in \mathscr E'(\rr d)$.

Because $W_f(t,\cdot)$ has compact support, $\langle \mathscr F g_t,1 \rangle$ and $\langle \mathscr F g_t,\xi \rangle$ are well defined, where $\langle \mathscr F g_t,\xi \rangle$ means the vector $(\langle \mathscr F g_t,\xi_j \rangle )_{j=1}^d$ and $\xi_j: \rr d \mapsto \ro$ is the $j$th coordinate function. The inverse Fourier transform of a distribution $u \in \mathscr E'(\rr d)$ is the entire function $x \mapsto \langle u,e^{2 \pi i x \cdot } \rangle$ \cite[Theorem 7.1.14]{Hormander1} so we have
\begin{equation}\nonumber
g_t (\tau) = \mathscr F^{-1} \mathscr F g_t (\tau) = \langle \mathscr F g_t,e^{2 \pi i \tau \cdot} \rangle
\end{equation}
and in particular
\begin{equation}\label{denominator1}
\langle W_f(t,\cdot),1 \rangle = \langle \mathscr F g_t,1 \rangle = g_t (0) = |f(t)|^2 = u^2(t)+v^2(t).
\end{equation}
In a similar way we have for $1 \leq j \leq d$
\begin{equation}\label{nominator1}
\begin{aligned}
\langle W_f(t,\cdot),\xi_j \rangle
& = \langle \xi_j \mathscr F g_t,1 \rangle \\
& = \frac1{2 \pi i} \langle \mathscr F (\partial_j g_t),1 \rangle \\
& = \frac{\partial_j g_t (0)}{2 \pi i} \\
& = \frac1{4 \pi i} \left( \partial_j f(t) \overline{f(t)} - f(t) \overline{\partial_j f(t)} \right) \\
& = \frac{1}{2\pi}\left( u(t) \partial_j v(t) - \partial_j u(t) v(t) \right).
\end{aligned}
\end{equation}
Now \eqref{ifwdmom2} follows from \eqref{denominator1}, \eqref{nominator1} and \eqref{fasderivata1}.
\end{proof}

Finally we give a generalization of Proposition \ref{ifwignerprop2}.

\begin{prop}\label{ifwignerprop3}
Let
$$
f(t) = \exp(\pi i t \cdot A t) h(t)
$$
where $A \in \rr {d \times d}$ is symmetric, and $h \in \mathscr F \mathscr E'(\rr d)$. Then
\begin{equation}\label{ifwdmom3}
\frac1{2 \pi} \nabla \arg f (t) = \frac{\langle W_f(t,\cdot),\xi \rangle}{\langle W_f(t,\cdot),1 \rangle}, \quad f(t) \neq 0.
\end{equation}
\end{prop}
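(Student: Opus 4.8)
The plan is to reduce the statement to Proposition \ref{ifwignerprop2} applied to $h$, by means of the chirp substitution $f = T_A h$ with $T_A h(t) = \exp(\pi i t \cdot A t) h(t)$.

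First I would dispose of the phase. Since $|\exp(\pi i t \cdot A t)| = 1$ we have $f(t) \neq 0 \iff h(t) \neq 0$, and on this open set $\arg f(t) = \pi \, t \cdot A t + \arg h(t)$ modulo $2 \pi$; as $A$ is symmetric, $\nabla_t (\pi \, t \cdot A t) = 2 \pi A t$, and both $f$ and $h$ are smooth ($h \in \mathscr F \mathscr E'(\rr d) \subseteq C_{\rm slow}^\infty(\rr d) \subseteq C^\infty$), so
\begin{equation}\nonumber
\frac1{2 \pi} \nabla \arg f(t) = A t + \frac1{2 \pi} \nabla \arg h(t) , \qquad f(t) \neq 0 .
\end{equation}
Since $h \in \mathscr F \mathscr E'(\rr d)$, Proposition \ref{ifwignerprop2} gives $\frac1{2\pi}\nabla\arg h(t) = \langle W_h(t,\cdot),\xi\rangle / \langle W_h(t,\cdot),1\rangle$ whenever $h(t)\neq 0$, and \eqref{denominator1} applied to $h$ gives $\langle W_h(t,\cdot),1\rangle = |h(t)|^2 \neq 0$. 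Hence it suffices to establish the distributional identities
\begin{equation}\label{plan-moments}
\langle W_f(t,\cdot),1 \rangle = \langle W_h(t,\cdot),1 \rangle , \qquad \langle W_f(t,\cdot),\xi \rangle = \langle W_h(t,\cdot),\xi \rangle + A t \, \langle W_h(t,\cdot),1 \rangle ;
\end{equation}
dividing these and inserting the two relations for $h$ just recalled then reproduces the phase identity above, hence \eqref{ifwdmom3}.

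The heart of the matter is identifying $W_f(t,\cdot)$ with a translate of $W_h(t,\cdot)$. By \eqref{chirpwigner1} we have $W_f = W_h \circ Q$, where $Q(t,\xi) = (t, \xi - A t)$ is an invertible linear shear. The key observation is that $Q$ maps the submanifold $t \times \rr d$ onto itself and restricts on it to the translation $\xi \mapsto \xi - A t$. Since $h \in \mathscr F \mathscr E'(\rr d) \subseteq WFW^{\perp}(\rr d)$ we have $WF(W_h) \subseteq \rr {2d} \times (0 \times \rr d \setminus 0)$, and pulling this back by $Q$ (as in the computation of $WF(W_f \circ Q)$ used for the $T_A$-invariance of $WFW^{\neq}$) produces covectors whose space component is nonzero, hence keeps the wave front set away from the normal bundle $N_t$ of $t \times \rr d$, so $W_f(t,\cdot) = R_t W_f$ is well defined. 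I would then prove
\begin{equation}\nonumber
W_f(t,\cdot) = T_{A t} \big( W_h(t,\cdot) \big) \quad \text{in } \mathscr D'(\rr d) ,
\end{equation}
where $T_{At}$ denotes translation by the vector $A t$, either by functoriality of the pullback under the composition of the embedding $t \times \rr d \hookrightarrow \rr {2d}$ with $Q$, or — in the spirit of Section \ref{restriction} — by approximating $W_h = \mathscr F_2 ( h \otimes \overline{h} \circ \kappa )$ by the smooth functions $\mathscr F_2 \big( (h \otimes \overline{h} \circ \kappa ) \, \chi_j \otimes \chi_j \big)$ as in Lemma \ref{restrictionlemma1}, applying $\circ Q$ and $R_t$ to the approximants (where these two operations commute trivially), and passing to the limit in $\mathscr D'(\rr d)$ via the continuity \eqref{Rcont1}. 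Since $W_h(t,\cdot) \in \mathscr E'(\rr d)$ by Corollary \ref{restrictioncor1}, it follows that $W_f(t,\cdot) \in \mathscr E'(\rr d)$ as well. This step is the \emph{main obstacle}: all the regularity care is concentrated here, the delicate point being that $f \notin C_{\rm slow}^\infty(\rr d)$ in general (the $\arg$-type phase makes the derivatives grow too fast), so neither Corollary \ref{restrictioncor1} nor Proposition \ref{restrictionprop1} applies directly to $f$, whereas they do apply to $h$.

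The remainder is bookkeeping. With $W_f(t,\cdot) = T_{At} ( W_h(t,\cdot) )$ and both sides compactly supported distributions, pairing against the smooth functions $1$ and $\xi_j$ and using $\langle T_{At} v , \psi \rangle = \langle v , \psi(\cdot + A t) \rangle$ (legitimate because $W_h(t,\cdot) \in \mathscr E'(\rr d)$, so it pairs with all smooth functions) gives exactly \eqref{plan-moments}, since $1$ is translation invariant and $\xi_j(\cdot + A t) = \xi_j + (A t)_j$. Dividing the two identities in \eqref{plan-moments}, substituting Proposition \ref{ifwignerprop2} for $h$, and combining with the phase identity derived in the second paragraph yields \eqref{ifwdmom3}.
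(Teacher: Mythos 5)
Your proof is correct and follows the same route as the paper: decompose $\arg f = \pi\,t\cdot At + \arg h$ to get $\frac1{2\pi}\nabla\arg f = At + \frac1{2\pi}\nabla\arg h$, use the chirp covariance $W_f(t,\xi)=W_h(t,\xi-At)$ to translate the zeroth and first frequency moments, and then invoke Proposition~\ref{ifwignerprop2} for $h$. The one place you expand on the paper is in justifying the restriction identity $W_f(t,\cdot)=T_{At}\big(W_h(t,\cdot)\big)$ in $\mathscr D'(\rr d)$: the paper simply writes $W_f(t,\xi)=W_h(t,\xi-At)$ without comment, whereas you correctly flag that Corollary~\ref{restrictioncor1} cannot be applied to $f$ directly (since $f\notin C^\infty_{\rm slow}$ when $A\neq 0$) and supply the missing justification — either functoriality of the pullback under the shear $Q$ composed with the embedding of $t\times\rr d$, using the $T_A$-invariance of $WFW^{\neq}$ to guarantee that $R_t W_f$ is well defined, or an approximation argument in the spirit of Lemma~\ref{restrictionlemma1}. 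This makes your version a slightly more careful rendering of the same argument.
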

\begin{proof}
Let $t \in \rr d$ be fixed and satisfy $f(t) \neq 0$. We have
$$
\arg f(t) = \arg h(t) + \pi t \cdot A t,
$$
\begin{equation}\label{iftranslation1}
\nabla \arg f(t) = \nabla \arg h(t) + 2 \pi A t,
\end{equation}
and
$$
W_f(t,\xi) = W_h(t,\xi-A t).
$$
This yields
\begin{equation}\label{denominator2}
\langle W_f(t,\cdot),1 \rangle = \langle W_h (t,\cdot),1 \rangle
\end{equation}
and
\begin{equation}\label{nominator2}
\begin{aligned}
\langle W_f(t,\cdot),\xi_j \rangle & = \langle W_h (t,\cdot-A t), \xi_j \rangle \\
& = \langle W_h (t,\cdot), \xi_j \rangle + (A t)_j \langle W_h (t,\cdot), 1 \rangle.
\end{aligned}
\end{equation}
Finally \eqref{denominator2}, \eqref{nominator2}, Proposition \ref{ifwignerprop2} and \eqref{iftranslation1} give
\begin{equation}\nonumber
\begin{aligned}
\frac{\langle W_f(t,\cdot),\xi_j \rangle}{\langle W_f(t,\cdot),1 \rangle}
& = \frac{\langle W_h (t,\cdot), \xi_j \rangle + (A t)_j \langle W_h (t,\cdot), 1 \rangle}{\langle W_h(t,\cdot),1 \rangle} \\
& = \frac{\langle W_h (t,\cdot), \xi_j \rangle}{\langle W_h(t,\cdot),1 \rangle} + (A t)_j \\
& = \left( \frac1{2\pi} \nabla \arg h(t) + A t \right)_j \\
& = \left( \frac1{2\pi} \nabla \arg f(t) \right)_j.
\end{aligned}
\end{equation}
\end{proof}

\section*{Ackowledgement}

We wish to thank Professors J. Johnsen, O. Liess, L. Rodino and J. Toft for discussions and valuable advice that have helped improve the paper.


\end{document}